\documentclass[a4paper,11pt,nosumlimits,nonamelimits]{article}
\usepackage[utf8]{inputenc}
\usepackage{amsmath,amssymb,amsfonts,amsthm}
\usepackage{array} 
\usepackage{url,hyperref,verbatim}
\usepackage{color}
\usepackage{graphicx}
\usepackage{epstopdf}
\usepackage[center]{caption}

\textwidth=16cm 
\oddsidemargin=0cm \evensidemargin=0cm  
\textheight=25cm
\topmargin=-1.5cm
\setlength{\parindent}{0pt} 
\setlength{\parskip}{5pt}

\clubpenalty=10000 \widowpenalty=10000 

\renewenvironment{itemize} 
	{\begin{list}
		{$\bullet$}{\setlength{\parskip}{5pt} \setlength{\topsep}{0cm}
		 \setlength{\partopsep}{0cm} \setlength{\itemsep}{3pt} \setlength{\parsep}{0cm}\item[]}}
	{\end{list}}

\theoremstyle{plain}
\newtheorem{theorem}{Theorem}
\newtheorem{lemma}{Lemma}
\newtheorem{corollary}{Corollary}

\newtheorem{proposition}{Proposition}

\theoremstyle{definition}
\newtheorem{definition}{Definition}

\theoremstyle{remark}
\newtheorem{remark}{Remark}

\newcommand{\real}{\mathbb{R}}
\newcommand{\nneg}{\mathbb{R}_+}
\newcommand{\pos}{(0,\infty)}
\newcommand{\znneg}{\mathbb{Z}_+}
\newcommand{\nat}{\mathbb{N}}

\newcommand{\la}{\langle}
\newcommand{\ra}{\rangle}

\renewcommand{\liminf}{\underline{\lim}}
\renewcommand{\limsup}{\overline{\lim}}

\newcommand{\pr}{\mathbb{P}}
\newcommand{\ex}{\mathbb{E}}
\newcommand{\ind}{\mathbb{I}}

\newcommand{\msr}{\mathbf{M}}
\newcommand{\skr}{\mathbf{D}}

\newcommand{\wto}{\stackrel{\text{w}}{\to}}

\newcommand{\cla}{\mathcal{A}}
\newcommand{\clc}{\mathcal{C}}
\newcommand{\cld}{\mathcal{D}}
\newcommand{\cli}{\mathcal{I}}
\newcommand{\cll}{\mathcal{L}}
\newcommand{\clm}{\mathcal{M}}
\newcommand{\clq}{\mathcal{Q}}
\newcommand{\clr}{\mathcal{R}}
\newcommand{\clu}{\mathcal{U}}
\newcommand{\cly}{\mathcal{Y}}
\newcommand{\clz}{\mathcal{Z}}
\newcommand{\clzinit}{\mathcal{Z}^{r, \, \text{init}}}
\newcommand{\clznew}{\mathcal{Z}^{r, \, \text{new}}}

\newcommand{\zinit}{Z^{r, \, \text{init}}}
\newcommand{\znew}{Z^{r, \, \text{new}}}

\newcommand{\clol}{\overline{\mathcal{L}}}
\newcommand{\cloy}{\overline{\mathcal{Y}}}
\newcommand{\cloz}{\overline{\mathcal{Z}}}
\newcommand{\clozinit}{\overline{\mathcal{Z}}^{r, \, \text{init}}}
\newcommand{\cloznew}{\overline{\mathcal{Z}}^{r, \, \text{new}}}

\newcommand{\oz}{\overline{Z}}
\newcommand{\oy}{\overline{Y}}
\newcommand{\ozinit}{\overline{Z}^{r, \, \text{init}}}
\newcommand{\oznew}{\overline{Z}^{r, \, \text{new}}}
\newcommand{\ove}{\overline{E}}
\newcommand{\oa}{\overline{A}}
\newcommand{\oq}{\overline{Q}}

\newcommand{\eps}{\varepsilon}
\newcommand{\Lmb}{\Lambda}
\newcommand{\lmb}{\lambda}
\newcommand{\sgm}{\sigma}
\newcommand{\Sgm}{\Sigma}
\newcommand{\Dlt}{\Delta}
\newcommand{\dlt}{\delta}

\DeclareMathOperator*{\argmax}{arg\,max}

\begin{document}

\abovedisplayskip=.6\abovedisplayskip 
\belowdisplayskip=.6\belowdisplayskip 
\abovedisplayshortskip=.6\abovedisplayshortskip 
\belowdisplayshortskip=.6\belowdisplayshortskip 

\frenchspacing 

\title{Fluid Limits for Bandwidth-Sharing Networks\\with Rate Constraints}

\author{ Maria Frolkova, Josh Reed and Bert Zwart\footnote{MF~is with
CWI, P.O. Box 94079, 1098 XG Amsterdam, The Netherlands. E-mail:
\url{M.Frolkova@cwi.nl}. JR~is with NYU Stern School of Business, 44 West 4th St., Suite 8-79, New York, NY 10012, the USA. E-mail: \url{jreed@stern.nyu.edu}. BZ~is with CWI. E-mail: \url{Bert.Zwart@cwi.nl}.
BZ~is also affiliated with EURANDOM, VU University Amsterdam, and
Georgia Institute of Technology. The research of~MF and~BZ in supported by an NWO VIDI grant.}}

\date{April 12, 2013}

\maketitle

\begin{abstract}
Bandwidth-sharing networks as introduced by Massouli\'e~\& Roberts (1998) model the dynamic interaction among an evolving population of elastic flows competing for several links. With policies based on optimization procedures, such models are of interest both from a~Queueing Theory and Operations Research perspective.

In the present paper, we focus on bandwidth-sharing networks with capacities and arrival rates of a large order of magnitude compared to transfer rates of individual flows. This regime is standard in practice. In particular, we extend previous work by Reed \& Zwart (2010) on fluid approximations for such networks: we allow interarrival times, flow sizes and patient times (i.e. abandonment times measured from the arrival epochs) to be generally distributed, rather than exponentially distributed. We also develop polynomial-time computable fixed-point approximations for stationary distributions of bandwidth-sharing networks, and suggest new techniques for deriving these types of results.

\vspace{2ex} \textit{Keywords:} bandwidth-sharing, rate constraints, impatience,  large capacity scaling, fluid limits, fixed-point approximations.

\vspace{2ex} \textit{MSC2010:} Primary 60K25, 60K30, 60F17, 60G57; Secondary 90B15, 90B22.
\end{abstract}

\section{Introduction}
Bandwidth-sharing policies as introduced by  Massouli\'{e} \& Roberts~\cite{MR98, MR99} dynamically distribute network resources among a changing population of users. Processor sharing is an example of such a~policy and assumes a single resource. Bandwidth-sharing networks are of great research and practical interest. Along with the basic application in telecommunications media, e.g. Internet congestion control, they also have recently been suggested as a tool in analyzing problems in road traffic~\cite{KW10}.

The main issues in bandwidth-sharing related research are stability conditions and performance evaluation. A variety of results regarding the first topic may be found in  De Veciana {\em et al.} \cite{VLK99, VLK01}, Bonald \& Massouli\'{e} \cite{BM01}, Mo \& Walrand \cite{MW00}, Massouli\'{e} \cite{Massoulie07}, Bramson \cite{Bramson05}, Gromoll \& Williams \cite{GW07}, and Chiang {\em et al.} \cite{CST06}. As for the second topic, for special combinations of network topologies and bandwidth-sharing policies, the network stationary distribution may be shown to be of a product form insensitive to the flow size distribution, see Bonald {\em et al.} \cite{BMPV06}. However, in general, approximation methods must be used, which is the subject matter of the present paper. Fundamental papers on fluid limit approximations for bandwidth sharing-networks are Kelly \& Williams \cite{KW04} and Gromoll \& Williams \cite{GW09}, some more results on fluid and diffusion approximations are to be found in Borst {\em et al.} \cite{EBZ07, BEZ09}, Kang {\em et al.} \cite{KKLW09} and Ye \& Yao \cite{YY08, YY}. The latter works ignore the fact that generally in practice the maximum service rate of an individual user is constrained, as has been pointed out by Roberts \cite{Roberts04}.

To the best of our knowledge, Ayesta \& Mandjes \cite{AM09} were the first to deal with fluid and diffusion approximations of bandwidth-sharing networks with rate limitations. They consider two specific settings first without rate constraints, and then they truncate the capacity constraints at the rate maxima. Reed \& Zwart \cite{RZ10} develop a different approach in the context of general bandwidth-sharing networks. They incorporate the rate constraints into the network utility maximization procedure that defines bandwidth allocations. Thus, users operating below the maximal rate are allowed to take up the bandwidth that is not used by other rate constrained users, and bandwidth allocations are Pareto optimal. Another interesting feature of this work is the scaling regime. In contrast to the papers mentioned above, which mostly focus on the large-time properties of networks with fixed-order parameters, Reed \& Zwart view networks on a fixed-time scale letting arrival rates and capacities grow large. This large capacity scaling reflects the fact that overall network capacity and individual user rate constraints may be of different orders of magnitude. For example, it is common that Internet providers set download speed limitations for individual users which are typically measured in megabits per second, while network capacities are measured in gigabits or terabits per second.

The framework of  \cite{RZ10} is rather comprehensive. In particular, it allows abandonments of flows: each flow knows how long it can stay in the system and abandons as soon as its service is finished or its patience time expires, whichever happens earlier. The present paper builds upon \cite{RZ10} by relaxing its stochastic assumptions: we assume general distribution for interarrival times and general joint distribution for the size and patience time of a flow (in particular, the flow size and patience time are allowed to be dependent), while \cite{RZ10} assumes a Markovian setting with independent arrivals, flow sizes and patience times. We study the behavior of bandwidth-sharing networks in terms of measure-valued processes that are called state descriptors and that keep track of residual flow sizes and residual patience times. The first main result of the paper is a fluid limit theorem (it generalizes the fluid limit result of~\cite{RZ10} to non-Markovian stochastic assumptions). We propose a~fluid model, or a formal deterministic approximation of the stochastic bandwidth-sharing model, and show that the scaled state descriptors are tight with all weak limit points a.s. solving the fluid model equation. We provide a sufficient condition for the fluid model to have a unique solution, which converts tightness of the scaled state descriptors into convergence to this fluid model solution. In the sense of techniques used in the proofs, this part of the paper is closely related to previous work on bandwidth-sharing \cite{GW09}, processor-sharing with impatience \cite{GRZ08}, and bandwidth-sharing in overload \cite{BEZ09, EBZ07}. The rate constraints play a crucial role in adopting these techniques. For example, the proof of convergence to fluid model solutions in \cite{GRZ08} requires an additional assumption of overload to eliminate problems at zero. However, in our case, due to the rate constraints, the network never empties, and the load conditions become irrelevant.

Our second main result, which is a new type of result for bandwidth-sharing networks, is convergence of the scaled network stationary distribution to the fixed point of the fluid model, provided the fixed point is unique. There is a similar result by Kang \& Ramanan \cite{KR10} for a call center model, but the techniques of~\cite{KR10} are different than ours. Applying the approach of Borst {\it et al.}~\cite{BEZ09}, we prove that in many cases the fixed point can be found by solving an optimization problem with a strictly concave objective function and a polyhedral constraint set, and thus is unique and computable in polynomial time. We also construct an example with multiple fixed points, which is a feature that is distinctive from earlier cited works. Besides proving new results for the particular model of bandwidth-sharing, we also suggest new ideas and believe that they can be adjusted to other models, too. In particular, we derive equations for asymptotic bounds for fluid model solutions (see~Theorem~\ref{th:stable_fixed_point}) that can be solved for a wide class of networks, and then asymptotic stability of the fixed point can be shown. Another interesting idea is that, in the stationary regime, the properties of a network depend on newly arriving flows only, since all initial flows are gone after some point (see~Lemma~\ref{lem:stat_no_atoms}). Throughout this part of the paper, we assume Poisson arrivals, since that guarantees existence of a unique stationary distribution. Poisson arrivals also imply $M/G/\infty$ bounds that are exploited heavily in the proofs.

The structure of the paper is as follows. Section~\ref{sec:stochastic_model} describes the stochastic bandwidth-sharing model, and Section~\ref{sec:fluid_model} introduces its deterministic analogue, the fluid model. Also Section~\ref{sec:fluid_model} states sufficient conditions for a fluid model solution to be unique, and for a fixed fluid model solution to be unique and asymptotically stable. Sections~\ref{sec:fluid_limits} and~\ref{sec:stat_distributions} discuss convergence of the scaled state descriptor and its stationary distribution to the fluid model and its fixed point, respectively. Sections~\ref{sec:proofs_fluid_model},~\ref{sec:proof_fluid_limits} and~\ref{sec:proof_stat_distributions} contain the proofs of the statements from Sections~\ref{sec:fluid_model},~\ref{sec:fluid_limits} and~\ref{sec:stat_distributions}. The Appendix proves auxiliary results. In the remainder of this section, we list the notation we use throughout the paper.

\paragraph{Notation} In order to introduce the notation, we use the signs $=:$ and $:=$.

The standard sets are denoted as follows: the reals $\real = (-\infty, \infty)$, the non-negative reals $\nneg = [0,\infty)$, the positive reals $(0,\infty)$, the non-negative integers $\znneg = \{ 0,1,2,\ldots \}$, and the natural numbers $\nat = \{1,2,\ldots\}$. 

The signs $\wedge$ and $\vee$ stand for minimum and maximum respectively. For $x \in \real$, $x^+ := x \vee 0$.

The signs $\liminf$ and $\limsup$ denote the lower and upper limits of a sequence of numbers.

The coordinates of a vector from a set $S^I$ are denoted by the same symbol as the vector with lower indices $1,\ldots,I$ added. If a vector has a~superscript, tilde-sign, or overlining, they remain in its coordinates. For example $\overline{x}^0 \in S^I$, $\overline{x}^0 = (\overline{x}_1^0, \ldots, \overline{x}_I^0)$. The space $\real^I$ is endowed with the supremum norm $\|x\|:= \max_{1 \leq i \leq I} |x_i|$. Vector inequalities hold coordinate-wise. The coordinate-wise product of vectors of the same dimensionality~$I$ is $x \ast y := (x_1 y_1, \ldots, x_I y_I)$.

The signs $\Rightarrow$, $\stackrel{\text{d}}{=}$ and $\leq_{\text{st}}$ stand for convergence in distribution, equality in distribution and stochastic order respectively. Recall that, for real-valued r.v.'s $X$ and $X'$, $X{\leq_{\text{st}}} X'$ if ${\mathbb{P}\{ X>x \}} \leq {\mathbb{P}\{
X' > x \}}$ for all $x \in \real$. The notation $\Pi(\lmb)$, $\lmb \in \pos$, stands for the Poisson distribution with parameter~$\lmb$.

For metric spaces $S$ and $S'$, denote by $\mathbf{C}_{S \to S'}$ the space of continuous functions $f \colon S \to S'$. By $\skr_{\nneg \to S}$ denote the space of functions $f \colon \nneg \to S$ that are right-continuous with left limits, and endow this space with the Skorokhod $J_1$-topology.

The superscript $-1$ is only used to denote the inverse of a function.

For a measure $\xi$ on $\nneg^2$ and a~$\xi$-integrable function $f \colon \nneg^2 \to \real$, define $\la f, \xi \ra := \int_{\nneg^2} f d \xi$. If $\xi = (\xi_1,\ldots, \xi_I)$ is a~vector of such measures, $ \la f, \xi \ra := ( \la f, \xi_1 \ra, \ldots, \la f, \xi_I \ra ) $. Let $\msr$ be the space of finite non-negative Borel measures on $\nneg^2$ endowed with the weak topology: $\xi^k \stackrel{\text{w}}{\to} \xi$ in $\msr$ as $k \to \infty$ if and only if $\la f, \xi^k \ra \to \la f, \xi \ra$ for all continuous bounded function $f \colon \nneg^2 \to \real$. Weak convergence of elements of $\msr$ is equivalent to convergence in the Prokhorov metric: for $\xi,\varphi \in \msr$, define
\begin{align*} 
d(\xi,\varphi) := \inf \{ & \eps \colon \xi(B) \leq \varphi(B^\eps) + \eps \text{ and } \varphi(B) \leq \xi(B^\eps) + \eps \\ 
&\text{for all non-empty closed } B \subseteq \nneg^2 \},
\end{align*}
where $B^\eps = \{ x \in \nneg^2 \colon \inf_{y \in B} \| x - y \| < \eps \}$.

For $\xi, \varphi \in \msr^I$, define
\[ d_I(\xi,\varphi) := \max_{1 \leq i \leq I} d(\xi_i, \varphi_i). \]
Equipped with the metric $d_I(\cdot, \cdot)$, the space $\msr^I$ is separable and complete.

\section{Stochastic model} \label{sec:stochastic_model}
This section contains a~detailed description of the model under consideration. In particular, it specifies the structure of the network, the policy it operates under and the stochastic dynamical assumptions. Also, a stochastic process is introduced that keeps track of the state of the network, see the state descriptor paragraph.

\paragraph{Network structure}
Consider a~network that consists of a~finite number of links labeled by $j=1,\ldots,J$. Traffic offered to the network is represented by elastic flows coming from a~finite number of classes labeled by $i=1,\ldots,I$. All class~$i$ flows are transferred through a~certain subset of links, we call it {\it route~$i$}. Transfer of a~flow starts immediately upon its arrival and is continuous with all links on the route of the flow being traversed simultaneously. Let $A$ be the $J \times I$ incidence matrix, where $A_{ji}=1$ if route~$i$ contains link~$j$ and $A_{ji}=0$ otherwise.

Suppose that at a particular time $t$ the population of the network is $z \in \znneg^I$, where $z_i$ stands for the number of flows on route~$i$. All flows on route~$i$ are transferred at the same rate~$\lmb_i(z)$ that is at most $m_i \in (0,\infty)$. If $z_i   = 0$, put $\lmb_i(z) := 0$. We refer to $\Lmb_i(z) := \lmb_i(z) z_i$ as the {\it bandwidth allocated to route~$i$}. The sum of the bandwidths allocated to the routes that contain link~$j$ is the {\it bandwidth allocated through link~$j$} and is at most $C_j \in \pos$. We call $C_j$ the {\it capacity of link~$j$}. Hence, the vectors $\lmb(z) = (\lmb_1(z),\ldots,\lmb_I(z))$ and $\Lmb(z) = (\Lmb_1(z),\ldots,\Lmb_I(z))$ must satisfy
\[
A (\lmb(z) \ast z)  = A \Lmb(z) \leq C, \quad \lmb(z) \leq m, \quad \Lmb(z) \leq m \ast z,
\]
where $C=(C_1,\ldots,C_J)$ and $m=(m_1,\ldots,m_I)$ are the vectors of link capacities and rate constraints.

\paragraph{Bandwidth-sharing policy} At each point in time, the link capacities should be distributed among the routes in such a way that the network utility is maximized. Namely, to each flow on route $i$ we assign a~utility $\clu_i(\cdot)$ that is a function of the rate allocated to that flow. Assume that the functions $\clu_i(\cdot)$ are strictly increasing and concave in $\nneg$, and twice differentiable in $\pos$ with $\lim_{x \downarrow 0} \clu'_i(x)=\infty$. We also allow $\lim_{x \downarrow 0} \clu_i(x)=-\infty$ as, for example, in the case of a logarithmic function. Then, for $z \in \nneg^I$, the vector $\lmb(z)$ of rates is the unique optimal solution to
\begin{equation} \label{eq:def_lmb}
\text{maximize} \quad \sum\nolimits_{i=1}^I z_i \, \mathcal{U}_i( \lmb_i) \quad \text{subject to} \quad A (\lmb \ast z) \leq C, \quad \lmb \leq m,
\end{equation}
where, by convention, $0 \times (-\infty) := 0$. Although the population vector has integer-valued coordinates, we assume that  $\lmb(z)$ and $\Lmb(z) := \lmb(z) \ast z$ are defined via~\eqref{eq:def_lmb} in the entire orthant $\nneg^I$ to accommodate fluid analogues of the population process later. 

The utility maximization procedure~\eqref{eq:def_lmb} implies that $\lmb_i(z) = \Lmb_i(z) = 0$ if $z_i = 0$. The assumption $\lim_{x \downarrow 0} \mathcal{U}'_i(x)=\infty$ guarantees non-idling, that is $\lmb_i(z), \Lmb_i(z) > 0$ if $z_i > 0$. Reed \& Zwart~\cite{RZ10} proved that the functions $\lmb(\cdot)$ and $\Lmb(\cdot)$ are differentiable in any direction and, in particular, locally Lipschitz continuous in the interior of $\nneg^I$. We also show continuity of $\Lmb(\cdot)$ on the boundary of $\nneg^I$ (see the Appendix).

\begin{lemma} \label{lem:Lmb_continuous}
The bandwidth allocation function~$\Lmb(\cdot)$ is continuous in $\nneg^I$.
\end{lemma}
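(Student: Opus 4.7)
My plan is to prove this by a subsequence-argmax argument tailored to the boundary $\partial \nneg^I$. Fix $z \in \nneg^I$ and a sequence $z^n \to z$. If $z$ lies in the interior of $\nneg^I$, continuity follows at once from the local Lipschitz property of $\Lmb(\cdot)$ established in \cite{RZ10}, so assume $z$ is a boundary point and split the coordinates as $I_+ := \{i : z_i > 0\}$ and $I_0 := \{i : z_i = 0\}$. For $i \in I_0$ the bound $\Lmb_i(z^n) \leq m_i z^n_i$ gives $\Lmb_i(z^n) \to 0 = \Lmb_i(z)$ immediately, so only the coordinates in $I_+$ require work.

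The vectors $\Lmb(z^n)$ are uniformly bounded (since $\Lmb(z^n) \leq m \ast z^n$ with $z^n \to z$), so any subsequence admits a further subsequence with $\Lmb(z^{n_k}) \to \Lmb^*$. I would first verify that $\Lmb^*$ is feasible at $z$: $A \Lmb^* \leq C$ and $\Lmb^* \leq m \ast z$ follow by passing to the limit, while $\Lmb^*_i = 0$ for $i \in I_0$ is already in hand. Then I would show that $\lmb^* := (\Lmb^*_i / z_i)_{i \in I_+}$ solves the $z$-problem restricted to $I_+$ (which by the convention $0 \times (-\infty) := 0$ is the only part of the objective that matters at $z$); by strict concavity of $\sum_{i \in I_+} z_i \mathcal{U}_i(\lmb_i)$ this optimizer is unique, so $\Lmb^*$ must coincide with $\Lmb(z)$, forcing $\Lmb(z^n) \to \Lmb(z)$.

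The optimality step is the technical core. Given any $\tilde{\lmb} \in \pos^{I_+}$ feasible at $z$ and any $\delta \in (0,1)$, I would plug $(1-\delta)\tilde{\lmb}$ into the $z^n$-problem, extended on $I_0$ by $\lmb(z^n)$ itself. For large $n$ this extension is feasible: the $I_+$-contribution to each link-$j$ constraint approaches $(1-\delta) \sum_{i \in I_+} A_{ji} \tilde{\lmb}_i z_i \leq (1-\delta) C_j$, while $\sum_{i \in I_0} A_{ji} \lmb_i(z^n) z^n_i$ vanishes since $\lmb_i(z^n) \leq m_i$ and $z^n_i \to 0$. Optimality of $\lmb(z^n)$ at $z^n$ then gives, after the $I_0$-terms cancel, $\sum_{i \in I_+} z^n_i \mathcal{U}_i(\lmb_i(z^n)) \geq \sum_{i \in I_+} z^n_i \mathcal{U}_i((1-\delta)\tilde{\lmb}_i)$. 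Passing to the limit along $(n_k)$ and then sending $\delta \downarrow 0$ yields $\sum_{i \in I_+} z_i \mathcal{U}_i(\lmb^*_i) \geq \sum_{i \in I_+} z_i \mathcal{U}_i(\tilde{\lmb}_i)$, which is the claimed optimality.

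The main obstacle will be the case $\mathcal{U}_i(0) = -\infty$: the subsequential limit could in principle have $\lmb^*_i = 0$ for some $i \in I_+$, making $\mathcal{U}_i(\lmb_i(z^n)) \to -\infty$ and breaking the limit passage above. I would rule this out by applying the displayed inequality with the specific test vector $\tilde{\lmb} = \lmb(z)$, which has strictly positive components by non-idling; this lower-bounds $\sum_{i \in I_+} z^n_i \mathcal{U}_i(\lmb_i(z^n))$ uniformly in $n$, and since each $\mathcal{U}_i$ is bounded above on $[0, m_i]$, every individual summand is bounded below, forcing each $\lmb_i(z^n)$ to stay in a compact subset of $\pos$ and hence $\lmb^*_i > 0$. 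With that in hand, the remaining limits are routine applications of the continuity of $\mathcal{U}_i$ on $\pos$.
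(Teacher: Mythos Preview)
Your argument is correct and follows essentially the same route as the paper's proof in the Appendix: extract a subsequential limit $\Lmb^*$ of $\Lmb(z^n)$, build a competitor at $z^n$ that agrees with $\lmb(z^n)$ on the vanishing coordinates $I_0$ so those utility terms cancel, and use optimality of $\lmb(z^n)$ to force $\Lmb^*=\Lmb(z)$. The paper phrases this as a contradiction (assume $\Lmb'\neq\Lmb(z)$ and exhibit a strictly better feasible point at $z^k$) and makes room for the $I_0$-bandwidth by subtracting $\|C^k\|$ from each $\Lmb_i(z)$, whereas you argue directly and create the slack by multiplying the test rate $\tilde{\lmb}$ by $(1-\delta)$; your treatment is also more explicit about the possibility $\clu_i(0)=-\infty$, which the paper's contradiction structure handles implicitly.
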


\paragraph{Stochastic assumptions}
All stochastic primitives introduced in this paragraph are defined on a~common probability space $(\Omega, \mathcal{F}, \pr)$ with expectation operator $\ex$. 

Suppose at time zero there is an~a.s. finite number of flows in the network, we call them {\it initial flows}. A~random vector $Z^0 \in \nneg^I$ represents the initial population, and $Z_i^0$ is the number of initial flows on route~$i$. New flows arrive to the network according to a~stochastic process $E(\cdot)= (E_1(\cdot),\ldots, E_I(\cdot))$ with sample paths in the Skorokhod space $\skr_{\nneg \to \nneg^I}$. The coordinates of the arrival process are independent counting processes. Recall that a~{\it counting process} is a~non-decreasing non-negative integer-valued process starting from zero.  For $t \geq 0$, $E_i(t)$ represents the number of flows that have arrived to route $i$ during the time interval $(0,t]$. The $k$th such arrival occurs at time $U_{i k} = \inf \{ t \geq 0 \colon E_i(t) \geq k \}$, it is called {\it flow~$k$ on route~$i$}, $k \in \nat$. Simultaneous arrivals are allowed.

Flows abandon the network due to transfer completions or because they run out of patience, depending on what happens earlier for each particular flow. Flow sizes and patience times are drawn from sequences $\{ (B_{i l}^0, D_{i l}^0) \}_{l \in \nat}$, $\{ (B_{i k}, D_{i k}) \}_{k \in \nat}$, $i=1,\ldots,I$, of $\pos^2$-valued r.v.'s. For $l = 1,\ldots, Z_i^0$, $B_{i l}^0$ and $D_{i l}^0$ represent the residual size and residual patience time at time zero of initial flow~$l$ on route~$i$. For $k \in \nat$, $B_{i k}$ and $D_{i k}$ represent the initial size and initial patience time of flow~$k$ on route~$i$, where ``initial'' means as upon arrival at time~$U_{i k}$.  Let $(B_{i k}, D_{i k})$, $k \in \nat$, be i.i.d. copies of a r.v. $(B_i, D_i)$ with distribution law $\theta_i$; and let the mean values $\ex B_i =: 1/\mu_i$ and $\ex D_i = 1/ \nu_i$ be finite. Assume that the sequences $\{ (B_{i k}, D_{i k}) \}_{k \in \nat}$ are independent and do not depend on the arrival process $E(\cdot)$. For the moment, we do not make any specific assumptions about the sequences $\{ (B_{i l}^0, D_{i l}^0) \}_{l \in \nat}$.

\paragraph{State descriptor}
We denote the {\it population process} by $Z(\cdot)=(Z_1(\cdot), \ldots, Z_I(\cdot))$, where $Z_i(t)$ is the number of flows on route~$i$ at time~$t$. As can be seen from what follows, $Z(\cdot)$ is a~random element of the Skorokhod space $\skr_{\nneg \to \nneg^I}$.

For $i = 1,\ldots, I$, introduce operators $S_i \colon \skr_{\nneg \to \nneg^I} \to \mathbf{C}_{\nneg^2 \to \nneg}$ defined by
\[
S_i(z,s,t) := \int_s^t  \lmb_i(z(u)) du,
\]
For $t \geq s \geq 0$, $S_i(Z,s,t)$ is the {\it cumulative bandwidth allocated per flow on route~$i$ during time interval $[s,t]$}. The {\it residual size} and {\it residual lead time at time~$t$} of initial flow $l = 1, \ldots,  Z_i^0$ on route~$i$ are given by 
\[ 
B_{i l}^0(t) :=(B_{i l}^0 - S_i(Z,0,t))^+  \quad \text{and} \quad D_{i l}^0(t) :=(D_{i l}^0 - t)^+ ,
\]
and those of flow $k = 1, \ldots, E_i(t)$ on route~$i$ by
\[
B_{i k}(t) :=(B_{i k} - S_i(Z, U_{i k},t))^+ \quad  \text{and} \quad D_{i k}(t) :=(D_{i k} - (t - U_{i k} ) )^+.
\]

The state of the network at any time $t$ is defined by the residual sizes and residual patience times of the flows present in the network. With each flow, we associate a~dot in $\nneg^2$, whose coordinates are the residual size and residual patience time of the flow (see Fig.~\ref{fig:state_descriptor}). As a flow is getting transferred, the corresponding dot moves toward the axis: to the left at the transfer rate (which is $\lmb_i(Z(t))$ for a~flow on route~$i$) and downward at the constant rate of~$1$. As a dot hits the vertical axis, the corresponding flow leaves due to completion of its transfer. As a dot hits the horizontal axis, the corresponding flow leaves due to impatience. We combine these moving dots into the stochastic process $\clz(\cdot) \in \skr_{\nneg \to \msr^I}$ with
\begin{equation} \label{eq:state_descriptor}
\clz_i(t) :=\sum_{l=1}^{Z_i^0} \dlt^+_{(B_{i l}^0(t),D_{i l}^0(t))} + \sum_{k=1}^{E_i(t)} \dlt^+_{(B_{i k}(t),D_{i k}(t))},
\end{equation}
where, for $x \in \nneg^2$, $\dlt^+_x \in \msr$ is the Dirac measure at $x$ if $x_1 \wedge x_2 > 0$ and zero measure otherwise (i.e. assigns a zero mass to any Borel subset of $\nneg^2$). That is, $\clz_i(t)$ is a counting measure on~$\nneg^2$ that puts a unit mass to each of the dots representing class~$i$ flows except those on the axes. The process $\clz(\cdot)$ given by~\eqref{eq:state_descriptor} is called the {\it state descriptor}. Note that the total mass of the state descriptor coincides with the network population, $\la 1, \clz(\cdot) \ra = Z(\cdot)$.

When proving the results of the paper, we decompose the state descriptors into two parts keeping track of initial and newly arriving flows, respectively. That is,
\[
\clz(\cdot) = \clz^{\text{init}}(\cdot) + \clz^{\text{new}}(\cdot),
\]
where
\begin{align*}
\clz^{\text{init}}_i(t):= \sum_{l=1}^{Z_i(0)} \dlt^+_{(B_{il}^0(t), D_{il}^0(t))} \quad \text{and} \quad
\clz^{\text{new}}_i(t):= \sum_{k=1}^{E_i(t)} \dlt^+_{(B_{ik}(t), D_{ik}(t))}.
\end{align*}

We also define the corresponding total mass processes
\[
Z^{\text{init}}(\cdot) := \la 1, \clz^{\text{init}}(\cdot) \ra \quad \text{and} \quad Z^{\text{new}}(\cdot) := \la 1, \clz^{\text{new}}(\cdot) \ra.
\]

\begin{figure}[!htb] 
\centering
\includegraphics[scale=0.65]{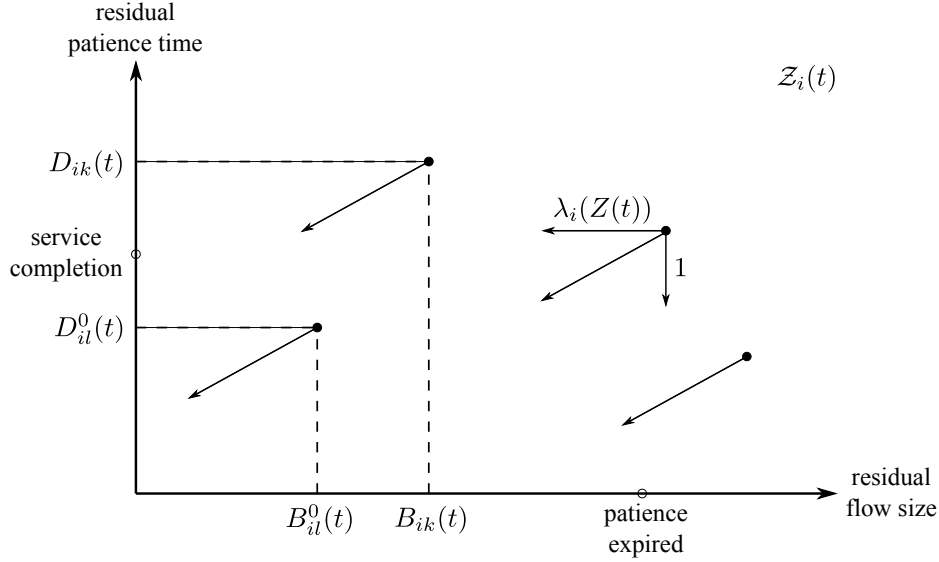}
\caption{The $i$-th coordinate $\clz_i(\cdot)$ of the state descriptor puts a unit mass to \\ the dots representing class~$i$ flows except those on the axes. }
\label{fig:state_descriptor}
\end{figure}

\section{Fluid model} \label{sec:fluid_model}
In this section we define and investigate a fluid model that is a deterministic analogue of the stochastic model described in the previous section. Later on the fluid model will be shown to arise as the limit of the stochastic model under a proper scaling. This convergence implies, in particular, existence of the fluid model.

To define the fluid model we need data $(\eta, \theta, \zeta^0) \in \pos^I \times \msr^I \times \msr^I$. The coordinates of $\eta$ play the role of arrival rates. As in the previous section, $\theta_i$ is the joint distribution of the generic size $B_i$ and patience time $D_i$ of a newly arrived flow on route~$i$ with finite expectations $\ex B_i = 1/\mu_i$ and $\ex D_i = 1/ \nu_i$. We also introduce the constants
\[
\rho_i := \eta_i / \mu_i, \quad \sgm_i := \eta_i / \nu_i,
\]
and the vectors $\rho, \sgm \in \pos^I$,
\[
\rho: = (\rho_1, \ldots, \rho_I), \quad \rho: = (\sgm_1, \ldots, \sgm_I).
\]
Finally, the measure-valued vector $\zeta^0$ characterizes the initial state of the network. Put $z^0 := \la 1, \zeta^0 \ra$ and, for all~$i$, take a r.v. $(B_i^0, D_i^0)$ that is degenerate at $(0,0)$ if $z_i^0 = 0$ and has distribution $\zeta_i^0 / z_i^0$ otherwise. Then $z^0$ represents the initial population, and $(B_i^0, D_i^0)$ the generic size and patience time of an initial flow on route~$i$.  We only consider initial conditions $\zeta^0$ such that the (marginal) distributions of $B_i^0$ and $D_i^0$ have no atoms. This restriction is necessary because we require the fluid model to be continuous, see Definition~\ref{def:FMS} below.

Denote by $\clc$ the collection of corner sets,
\[
\clc := \{ [x,\infty) \times [y,\infty) \colon (x,y) \in \nneg^2 \}.
\]

\begin{definition} \label{def:FMS} A pair $(\zeta, z) \in \mathbf{C}_{\nneg \to \msr^I} \times \mathbf{C}_{\nneg \to \nneg^I}$ is called a {\it fluid model solution (FMS) for the data $(\eta, \theta,\zeta^0)$} if $z(\cdot) = \la 1, \zeta(\cdot) \ra$ and, for all $i$, $t \geq 0$ and $A \in \clc$,
\begin{align} \label{eq:mvfms}
\zeta_i(t) (A) =& z_i^0 \pr \{ (B_i^0, D_i^0) \in A + (S_i(z,0,t), t) \}  \nonumber \\
&+ \eta_i \int_0^t \pr \{ (B_i,D_i) \in A + (S_i(z,s,t), t - s) \} ds.
\end{align}
In particular, for all $i$ and $t \geq 0$,
\begin{align} \label{eq:nfms}
z_i(t) = \zeta_i(t) (\nneg^2) =& z_i^0 \pr \{ B_i^0 \geq S_i(z,0,t), D_i^0 \geq t \}  \nonumber \\
&+ \eta_i \int_0^t \pr \{ B_i \geq S_i(z,s,t), D_i \geq t - s \} ds.
\end{align}
The function $\zeta(\cdot)$ is called a {\it measure-valued fluid model solution (MVFMS)} and the function $z(\cdot)$ a {\it numeric fluid model solution (NFMS)}
\end{definition}

Equations~\eqref{eq:mvfms} and~\eqref{eq:nfms} have appealing physical interpretations. For example,~\eqref{eq:nfms} simply means that a flow is still in the network at time~$t$ if its size and patience exceed, respectively, the amount of service it has received and the time that has passed since its arrival up to time~$t$.

\begin{remark}
By Dynkin's $\pi$-$\lmb$ theorem (see~\cite[Section 2.3]{GRZ08}), FMS's satisfy~\eqref{eq:mvfms} with any Borel set $A \subseteq \nneg^2$.
\end{remark}

\begin{remark} \label{rem:shifted_fms}
FMS's are invariant with respect to time shifts in the sense that, if $(\zeta, z)(\cdot)$ is an~FMS, then, for any $\dlt > 0$, $(\zeta^\dlt, z^\dlt)(\cdot) := (\zeta, z)(\cdot + \dlt)$ is an~FMS for the data $(\eta, \theta, \zeta(\dlt))$. That is, for all~$i$, $t \geq \dlt$ and Borel sets $A  \subseteq \nneg^2$,
\begin{subequations}
\begin{align}
\zeta_i(t)(A) &= \zeta_i(\dlt)(A+(S_i(z,\dlt,t), t - \dlt)) + \eta_i \int_\dlt^t \pr \{ (B_i, D_i) \in A+(S_i(z,s,t), t - s) \} ds, \label{eq:mvfms_shifted} \\
z_i(t) &= \zeta_i(\dlt) ([S_i(z,\dlt,t), \infty) \times [t - \dlt, \infty)) + \eta_i \int_\dlt^t \pr \{ B_i \geq S_i(z,s,t), D_i \geq t - s \} ds. \label{eq:fms_shifted}
\end{align}
\end{subequations}
\end{remark}

\begin{remark} \label{rem1}
The measure-valued and numeric components of an FMS uniquely define each other. In particular, uniqueness of an NFMS implies uniqueness of an MVFMS, and the other way around.
\end{remark}

As was mentioned earlier, the existence of FMS's is guaranteed by Theorem~\ref{th:fluid_limits} that follows in the next section. In the rest of this section, we discuss sufficient conditions for an~FMS to be unique and for an invariant (i.e. constant) FMS to be unique and asymptotically stable. To prove the stability result, we derive relations for asymptotic bounds for FMS's, which seems to be a novel approach since we have not seen analogous results in the related literature. We also give an example of multiple invariant FMS's.

\paragraph{Uniqueness of an FMS} The proof of the following theorem follows along the lines of the proofs of similar results~\cite[Proposition 4.2]{BEZ09} and~\cite[Theorem 3.5]{GRZ08}, see Section~\ref{sec:proofs_fluid_model}.
\begin{theorem} \label{th:unique_fms} Suppose that either \textup{(i)} $z_i^0 = 0 $ for all $i$, or \textup{(ii)} $z^0 \in \pos^I$ and the first projection of $\zeta^0$ is Lipschitz continuous, i.e. there exists a~constant $L \in \pos$ such that  for all $i$, $x < x'$ and $y$,
\[
\zeta_i^0 ([x,x'] \times [y,\infty)) \leq L (x' - x).
\]
Then an FMS for the data $(\eta, \theta, \zeta^0)$ is unique.
\end{theorem}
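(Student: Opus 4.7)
The plan is to reduce to uniqueness of the NFMS $z(\cdot)$ via Remark~\ref{rem1}, then handle an initial transient on which the fluid equation becomes explicit, and finally run a Gronwall-type argument on the interior. The rate bound $\lmb_i \leq m_i$ combined with~\eqref{eq:nfms} yields $z_i(t) \geq \eta_i \int_0^t \pr\{B_i \geq m_i(t-s),\, D_i \geq t-s\}\,ds > 0$ for every $t > 0$, so any FMS sits in the interior of $\nneg^I$ for positive times (and already at $t=0$ in case~(ii)). Furthermore, for $z \in \pos^I$ with $\|z\|$ small enough the link constraints $A(\lmb \ast z) \leq C$ in~\eqref{eq:def_lmb} are slack, so the binding constraint is $\lmb \leq m$ and the optimizer is $\lmb(z) \equiv m$; since $z^{(j)}$ is continuous with $z^{(j)}(0)=0$ in case~(i), there is $\tau_0 > 0$ such that $\lmb_i(z^{(j)}(u)) = m_i$ for all $u \in [0, \tau_0]$ and $j = 1, 2$. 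On $[0, \tau_0]$ equation~\eqref{eq:nfms} reduces to the explicit $z_i(t) = \eta_i \int_0^t \pr\{B_i \geq m_i(t-s),\, D_i \geq t-s\}\,ds$, forcing $z^{(1)} \equiv z^{(2)}$ there. In case~(ii) one simply takes $\tau_0 = 0$.

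Now I would run Gronwall on $[\tau_0, T]$ for arbitrary $T > \tau_0$. Both solutions lie in a compact $K \subset \pos^I$ on which $\lmb$ is Lipschitz (with some constant $L$) and bounded below by some $c > 0$, hence the primitives $T^{(j)}_i(t) := \int_0^t \lmb_i(z^{(j)}(u))\,du$ satisfy $\|T^{(1)}_i - T^{(2)}_i\|_{[0, t]} \leq L \int_{\tau_0}^t \|z^{(1)}(u) - z^{(2)}(u)\|\,du$. Subtracting the two copies of~\eqref{eq:nfms}, the initial term in case~(ii) is controlled by $L_0\, z_i^0\, |T^{(1)}_i(t) - T^{(2)}_i(t)|$ using the Lipschitz hypothesis on the first marginal of $\zeta^0$, and it vanishes in case~(i); the remaining integral term equals $\eta_i \int_0^t \pr\{B_i \in I_i(s, t),\, D_i \geq t-s\}\,ds$, where $I_i(s, t)$ is the closed interval with endpoints $T^{(j)}_i(t) - T^{(j)}_i(s)$, $j = 1, 2$.

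The main obstacle is bounding this last integral without any smoothness hypothesis on $\theta_i$ (no density, possibly atoms). The decisive trick is a Fubini/level-set argument: the integral equals $\eta_i\, \ex[\mathrm{Leb}\{s \in [0, t] : B_i \in I_i(s, t),\, D_i \geq t-s\}]$, and for a fixed realization $B_i = b$ one checks that $\{s : b \in I_i(s, t)\} = [\tau^{(1)}(b) \wedge \tau^{(2)}(b),\; \tau^{(1)}(b) \vee \tau^{(2)}(b)]$, with $\tau^{(j)}(b)$ the unique $s$ such that $T^{(j)}_i(s) = T^{(j)}_i(t) - b$; because $T^{(j)}_i$ has slope bounded below by a positive constant (equal to $m_i$ on $[0, \tau_0]$ and to $c$ on $[\tau_0, T]$), the length of this set is at most a constant multiple of $\|T^{(1)}_i - T^{(2)}_i\|_{[0, t]}$. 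Substituting back yields an inequality of the form $\sup_{s \leq t} \|z^{(1)}(s) - z^{(2)}(s)\| \leq C \int_{\tau_0}^t \sup_{u \leq s} \|z^{(1)}(u) - z^{(2)}(u)\|\,du$ on $[\tau_0, T]$, and Gronwall's lemma forces $z^{(1)} \equiv z^{(2)}$. The hard part throughout is replacing the missing density assumption on $\theta_i$ by the uniform slope lower bound on $T^{(j)}_i$ that the rate constraints provide; this same geometric input is what makes the rate-constrained fluid model better-behaved than the unconstrained analogues treated in~\cite{BEZ09, GRZ08}.
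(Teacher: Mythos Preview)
Your proposal is correct and follows essentially the same route as the paper's proof: both handle case~(i) by observing that $\lmb(z)=m$ for small $\|z\|$ so the NFMS is explicit on an initial interval, and both dispose of the integral term in~\eqref{eq:nfms} without any density assumption on $\theta_i$ by exploiting the uniform positive lower bound on $\lmb_i(z(\cdot))$ that the rate constraints provide. The only packaging differences are that the paper does the integral estimate via the change of variable $v=S_i(z^1,s,t)$ combined with the elementary Lemma~\ref{lem1} (equivalent to your Fubini/level-set computation), and closes with a local contradiction at $t_\ast=\inf\{t:z^1(t)\neq z^2(t)\}$ rather than a global Gronwall inequality; in case~(i) the paper also explicitly checks that $\zeta(\dlt)$ inherits a Lipschitz first projection so as to reduce to case~(ii), whereas you sidestep this by keeping the original equation with zero initial term.
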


\paragraph{Uniqueness of an invariant FMS} Let $(\zeta,z)$ be an invariant FMS. By Lemma~\ref{lem:fms_bounded} in Section~\ref{sec:proofs_fluid_model}, all of the coordinates of $z$ are positive, and the fluid model equations~\eqref{eq:mvfms} and~\eqref{eq:nfms} for $(\zeta,z)$ look as follows: for all~$i$, Borel subsets $A \subseteq \nneg^2$ and $t \geq 0$,
\begin{align}
\zeta_i(A) &= \zeta_i (A + (\lmb_i(z)t,t)) + \eta_i \int_0^t \theta_i(A + (\lmb_i(z)s,s)) ds, \label{eq:fixed_msr_1} \\
z_i &= \zeta_i([\lmb_i(z)t, \infty) \times [t, \infty)) + \eta_i \int_0^t \pr \{ B_i \geq \lmb_i(z)s, D_i \geq s \} ds \label{eq:fixed_point_1}.
\end{align}
Letting $t \to \infty$ in~\eqref{eq:fixed_msr_1} and~\eqref{eq:fixed_point_1}, we obtain the equations
\begin{align}
\zeta_i(A) &= \eta_i \int_0^\infty \theta_i(A + (\lmb_i(z)s,s)) ds, \label{eq:fixed_msr}\\
z_i &= \eta_i \ex (B_i/\lmb_i(z) \wedge D_i ), \label{eq:fixed_point}
\end{align}
which are actually equivalent to~\eqref{eq:fixed_msr_1} and~\eqref{eq:fixed_point_1}.

Thus, we have the closed-form equation~\eqref{eq:fixed_point} for the numeric components of invariant FMS's, and the corresponding measure-valued components are defined by~\eqref{eq:fixed_msr}. In particular, uniqueness of an invariant FMS is equivalent to uniqueness of a solution to~\eqref{eq:fixed_point}.

Multiplying the coordinates of~\eqref{eq:fixed_point} by the corresponding rates $\lmb_i(z)$, we obtain the equivalent equation
\begin{equation} \label{eq:fixed_point_equiv}
\Lmb_i(z) = g_i(\lmb_i(z)) \quad \text{for all $i$,}
\end{equation}
where 
\[
g_i(x) := \eta_i \ex (B_i \wedge x D_i), \quad x \geq 0.
\]

We suggest a sufficient condition for uniqueness of a solution to~\eqref{eq:fixed_point_equiv} (i.e. of an invariant NFMS) that involves the left most points of supports of certain distributions.

\begin{definition}
For an $\real$-valued r.v. $X$, denote by $\inf X$ the left most point of its support. Recall that the {\it support} of $X$  is the minimal (in the sense of inclusion) closed interval $S$ such that $\pr \{ X \in S \} = 1$.
\end{definition}

As we show later (see Lemmas~\ref{lem:g_increasing} and~\ref{lem:alpha} in Section~\ref{sec:proofs_fluid_model}), if $m_i \leq 1/\inf (D_i/B_i)$, where $1/0 := \infty$ by definition, the function $g_i(\cdot)$ is continuous and strictly increasing in the interval $[0,m_i]$, implying that its inverse is well-defined in $[0, g_i(m_i)]$. Then we can prove the following result.

\begin{theorem} \label{th:unique_fixed_point}
Let 
\begin{equation} \label{eq:suf_cond}
\inf (D_i/B_i) \leq 1/m_i \quad \text{for all $i$}.
\end{equation}
Then there exists a unique invariant FMS $(\zeta^\ast,z^\ast)$, and the bandwidth allocation vector $\Lmb(z^\ast)$ is the unique solution to the optimization problem
\begin{equation} \label{eq:th_unique_fixed_point}
\text{maximize} \quad \sum_{i = 1}^I G_i( \Lambda_i) \quad \text{subject to} \quad A\Lambda \leq C, \quad \Lambda_i \leq g_i(m_i) \quad \text{for all $i$},
\end{equation}
with strictly concave functions $G_i(\cdot)$ such that $G_i'(\cdot) = \mathcal{U}'_i (g_i^{-1}(\cdot))$ in $[0, g_i(m_i)]$.
\end{theorem}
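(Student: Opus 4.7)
My approach is to identify invariant FMS via a KKT correspondence with the strictly concave program~\eqref{eq:th_unique_fixed_point}. First I would set up the functions $G_i$. Under~\eqref{eq:suf_cond} Lemmas~\ref{lem:g_increasing} and~\ref{lem:alpha} assert that each $g_i$ is continuous and strictly increasing on $[0,m_i]$, so $g_i^{-1}$ is well-defined on $[0,g_i(m_i)]$. Because $\mathcal{U}_i'$ is strictly decreasing and $g_i^{-1}$ is strictly increasing, $G_i'(\cdot)=\mathcal{U}_i'(g_i^{-1}(\cdot))$ is strictly decreasing on $[0,g_i(m_i)]$, so $G_i$ is strictly concave, with the boundary condition $G_i'(0+)=\mathcal{U}_i'(0+)=+\infty$. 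The feasible set of~\eqref{eq:th_unique_fixed_point} is a non-empty compact convex polyhedron and the objective is continuous and strictly concave, so~\eqref{eq:th_unique_fixed_point} admits a unique maximizer $\Lambda^*$. The boundary condition rules out $\Lambda^*_i=0$: any zero coordinate could be perturbed upward by a small $\eps$, if necessary compensated by a matching decrease of some $\Lambda^*_k>0$ sharing a saturated link; strict concavity with $G_i'(0+)=\infty$ makes this swap strictly improving, a contradiction.

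Next, define $\lambda^*_i := g_i^{-1}(\Lambda^*_i)\in(0,m_i]$ and $z^*_i := \Lambda^*_i/\lambda^*_i>0$, and let $\zeta^*$ be given by~\eqref{eq:fixed_msr} with this $z^*$. I claim $(\zeta^*,z^*)$ is an invariant FMS. By KKT for~\eqref{eq:th_unique_fixed_point} there exist multipliers $p^*\ge 0$ (for $A\Lambda\le C$) and $q^*\ge 0$ (for $\Lambda_i\le g_i(m_i)$) with $G_i'(\Lambda^*_i)=(A^Tp^*)_i+q^*_i$ and the usual complementary slackness. Substituting $G_i'(\Lambda^*_i)=\mathcal{U}_i'(\lambda^*_i)$ and multiplying by $z^*_i>0$ yields
\[
z^*_i\,\mathcal{U}_i'(\lambda^*_i)=z^*_i(A^Tp^*)_i+z^*_iq^*_i.
\]
The complementarity $p^*_j((A\Lambda^*)_j-C_j)=0$ rewrites as $p^*_j(A(\lambda^*\ast z^*)-C)_j=0$, and $q^*_i(\Lambda^*_i-g_i(m_i))=0$ combined with strict monotonicity of $g_i$ gives $(z^*_iq^*_i)(\lambda^*_i-m_i)=0$. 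These are exactly the KKT conditions for the strictly concave program~\eqref{eq:def_lmb} at $z=z^*$, so $\lambda(z^*)=\lambda^*$, and hence $\Lambda(z^*)=\Lambda^*=g(\lambda(z^*))$, verifying~\eqref{eq:fixed_point_equiv}.

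Conversely, for any invariant FMS $(\zeta,z)$, Lemma~\ref{lem:fms_bounded} ensures $z\in\pos^I$ and~\eqref{eq:fixed_point_equiv} holds. Reversing the previous manipulation---dividing the KKT relation for~\eqref{eq:def_lmb} by $z_i>0$ and using $\mathcal{U}_i'(\lambda_i(z))=G_i'(\Lambda_i(z))$---shows that $\Lambda(z)$ is a KKT point of~\eqref{eq:th_unique_fixed_point}, hence equals $\Lambda^*$ by strict concavity; then $\lambda(z)=\lambda^*$, $z=\Lambda^*/\lambda^*=z^*$, and $\zeta=\zeta^*$ via~\eqref{eq:fixed_msr}.

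The main obstacle is the careful translation between the two KKT systems: the Lagrange multipliers rescale by $z^*$ going from~\eqref{eq:th_unique_fixed_point} to~\eqref{eq:def_lmb}, and one must verify complementary slackness survives this rescaling, which crucially uses $z^*_i>0$ and hence the positivity of $\Lambda^*$. A secondary subtlety is ruling out $\Lambda^*_i=0$: a pure one-coordinate perturbation may be infeasible when a link is saturated, so the argument must permit a compensating decrease on another route sharing the binding link.
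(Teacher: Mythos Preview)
Your proposal is correct and follows essentially the same approach as the paper: both establish a bijective correspondence between the KKT systems of~\eqref{eq:def_lmb} and~\eqref{eq:th_unique_fixed_point} via the substitution $\lambda_i=g_i^{-1}(\Lambda_i)$, using strict concavity of the $G_i$ to pin down $\Lambda^\ast$ uniquely. The only differences are cosmetic: you treat existence before uniqueness whereas the paper does the reverse, and you make explicit the positivity $\Lambda^\ast_i>0$ (needed to form $z^\ast_i=\Lambda^\ast_i/\lambda^\ast_i$), which the paper leaves implicit in the condition $G_i'(0+)=\infty$; the paper also introduces a strictly increasing extension $\tilde g_i$ of $g_i$ beyond $[0,m_i]$ so that $G_i$ is defined on a convenient domain, a device you do not need since you work directly on $[0,g_i(m_i)]$.
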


\begin{remark}
Note that it is realistic to assume that flows do not abandon if they are always served at the maximum rate, i.e. that $D_i \geq B_i / m_i$. For such routes, we have $g_i(m_i) = \rho_i$, and the sufficient uniqueness condition~\eqref{eq:suf_cond} reads as $\inf (D_i / B_i) = 1/m_i$.
\end{remark}

The complete proof of Theorem~\ref{th:unique_fixed_point} is postponed to Section~\ref{sec:proofs_fluid_model}, here we only discuss the main ideas. For now, let $z^\ast$ be any invariant NFMS. As we plug the fixed point equation~\eqref{eq:fixed_point_equiv} into the optimization problem~\eqref{eq:def_lmb} for the rate vector $\lmb(z^\ast)$, the problem~\eqref{eq:th_unique_fixed_point} follows, which is strictly concave and does not depend on~$z^\ast$. Hence, $\Lmb(z^\ast) =: \Lambda^\ast$ is the same for all invariant points $z^\ast$. This idea of transforming the optimization problem defining the rate vector combined the fixed point equation into an independent problem for the bandwidth allocation vector we adopted from Borst {\it et al.} \cite[Lemma 5.2]{BEZ09}. Now, since the functions $g_i(\cdot)$ are invertible in the feasible rate intervals $[0,m_i]$, it follows from~\eqref{eq:fixed_point_equiv} that the fixed point is unique and given by 
\begin{equation} \label{eq:fp_via_Lmb}
z^\ast_i = \Lmb^\ast_i / g_i^{-1}(\Lmb^\ast_i).
\end{equation}

Note that this method not only proves uniqueness of an invariant FMS, but also suggests a two-step algorithm to compute it: first we need to solve the strictly concave optimization problem~\eqref{eq:th_unique_fixed_point} for $\Lmb^\ast$, which can be done with any desired accuracy in a polynomial time, and then we can compute the fixed point $z^\ast$ itself by formula \eqref{eq:fp_via_Lmb}.

\paragraph{Asymptotic bounds for FMS's} Here we derive asymptotic bounds for NFMS's that, for a wide class of bandwidth-sharing networks, imply convergence to the invariant NFMS provided it is unique.
\begin{theorem} \label{th:stable_fixed_point}
There exist constants $l, u \in \pos^I$ such that, for any NFMS $z(\cdot)$,
\[
0 < l_i \leq \liminf_{t \to \infty} z_i(t) \leq \limsup_{t \to \infty} z_i(t) \leq u_i \quad \text{for all $i$}.
\]
These constants satisfy the relations
\begin{equation} \label{eq:bounds_fms}
l_i = \eta_i \ex ( B_i / R_i(l,u) \wedge D_i ), \quad u_i = \eta_i \ex ( B_i / r_i(l,u) \wedge D_i ) \quad \text{for all $i$},
\end{equation}
where the functions $r(\cdot,\cdot)$ and $R(\cdot,\cdot)$ are defined by
\[
r_i(x,x') := \inf_{x \leq z \leq x'} \lmb_i(z), \quad R_i(x,x') := \sup_{x \leq z \leq x'} \lmb_i(z) \quad  \text{for all $i$ and $x \leq x'$}.
\]
\end{theorem}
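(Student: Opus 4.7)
The plan is to construct $l$ and $u$ as the monotone limits of an iterative refinement scheme, starting from easy a priori estimates. From~\eqref{eq:nfms}, using the rate constraint $\lmb_i(z(u)) \leq m_i$ (so that $S_i(z,s,t) \leq m_i(t-s)$) and discarding the non-negative initial-flow term, one obtains
\[
z_i(t) \geq \eta_i \int_0^t \pr\{B_i \geq m_i(t-s),\, D_i \geq t-s\}\, ds \longrightarrow \eta_i \ex(B_i/m_i \wedge D_i) =: l^{(0)}_i > 0.
\]
Dropping the $B_i$-condition in~\eqref{eq:nfms} and noting that $z_i^0\,\pr\{D_i^0 \geq t\} \to 0$ gives $\limsup_{t \to \infty} z_i(t) \leq \eta_i \ex D_i = \sgm_i =: u^{(0)}_i$. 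Hence $l^{(0)} \leq \liminf_{t \to \infty} z(t) \leq \limsup_{t \to \infty} z(t) \leq u^{(0)}$.

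The main step is an inductive refinement. Assume $l^{(n)} \leq \liminf_{t \to \infty} z(t) \leq \limsup_{t \to \infty} z(t) \leq u^{(n)}$ with $l^{(n)} > 0$. For $\eps \in (0, \min_i l^{(n)}_i)$, choose $T$ such that $z(t) \in [l^{(n)} - \eps\mathbf{1},\, u^{(n)} + \eps\mathbf{1}]$ for all $t \geq T$, and apply the shifted fluid equation~\eqref{eq:fms_shifted} at $\dlt = T$:
\[
z_i(t) = \zeta_i(T)\bigl([S_i(z,T,t),\infty) \times [t-T,\infty)\bigr) + \eta_i \int_T^t \pr\{B_i \geq S_i(z,s,t),\, D_i \geq t-s\}\, ds.
\]
The first term is dominated by $\zeta_i(T)(\nneg \times [t-T,\infty))$, which vanishes as $t \to \infty$ since $\zeta_i(T)$ is a finite measure on $\nneg^2$. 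For $s \in [T,t]$ and $u \in [s,t]$ one has $\lmb_i(z(u)) \in [r_i^{(n,\eps)},\, R_i^{(n,\eps)}]$, where $r_i^{(n,\eps)}, R_i^{(n,\eps)}$ denote $r_i, R_i$ evaluated on the $\eps$-enlarged interval, so $S_i(z,s,t) \in [r_i^{(n,\eps)}(t-s),\, R_i^{(n,\eps)}(t-s)]$. Sandwiching the integrand, passing $t \to \infty$, and then $\eps \downarrow 0$ (using continuity of $\lmb_i$) yields the updated bounds
\[
l^{(n+1)}_i := \eta_i \ex(B_i/R_i(l^{(n)}, u^{(n)}) \wedge D_i), \quad u^{(n+1)}_i := \eta_i \ex(B_i/r_i(l^{(n)}, u^{(n)}) \wedge D_i),
\]
and a simple induction (using that $r_i$ is non-decreasing and $R_i$ non-increasing under set inclusion of intervals) confirms $[l^{(n+1)}, u^{(n+1)}] \subseteq [l^{(n)}, u^{(n)}]$.

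The bounded monotone sequences $l^{(n)} \uparrow l$ and $u^{(n)} \downarrow u$ converge, with $l \geq l^{(0)} > 0$ and $u \leq u^{(0)} < \infty$. Since the nested compacts $[l^{(n)}, u^{(n)}]$ decrease to $[l,u]$ and $\lmb_i$ is continuous on the positive orthant, a compactness/subsequence argument shows $r_i(l^{(n)}, u^{(n)}) \to r_i(l,u)$ and $R_i(l^{(n)}, u^{(n)}) \to R_i(l,u)$. Dominated convergence with integrable majorant $D_i$ then passes to the limit in the recursion and produces the fixed-point relations~\eqref{eq:bounds_fms}. The main obstacle is executing the $\eps$-cushion step: one must verify (via continuity of $\lmb_i$ on compacta, together with the fact that the enlarged interval stays in the strictly positive orthant where $\lmb_i > 0$ by non-idling) that $r_i^{(n,\eps)} \to r_i(l^{(n)}, u^{(n)})$ and $R_i^{(n,\eps)} \to R_i(l^{(n)}, u^{(n)})$ as $\eps \downarrow 0$, and simultaneously confirm that the shifted initial-measure term vanishes uniformly enough in $t$ so as not to contaminate the bounds; once this coordination is handled, the rest is a standard monotone iteration.
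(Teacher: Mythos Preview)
Your proposal is correct and follows essentially the same approach as the paper's proof: starting from the a priori bounds $l^{(0)}_i = \eta_i\ex(B_i/m_i \wedge D_i)$ and $u^{(0)}_i = \sgm_i$, iteratively refining via the shifted fluid equation~\eqref{eq:fms_shifted} with an $\eps$-cushion, and then passing to the monotone limits. Your write-up is in fact somewhat more explicit than the paper's about the continuity/compactness argument needed to justify $r_i(l^{(n)},u^{(n)}) \to r_i(l,u)$, $R_i(l^{(n)},u^{(n)}) \to R_i(l,u)$ and the corresponding $\eps \downarrow 0$ step, which the paper simply asserts.
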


\begin{remark}
There could be more than one pair $(l,u)$ solving~\eqref{eq:bounds_fms}. The asymptotic bounds $l$~and~$u$ for NFMS's given by Theorem~\ref{th:stable_fixed_point} form one of such pairs.
\end{remark}

We now proceed with the proof of Theorem~\ref{th:stable_fixed_point}.

\begin{proof}
Note that if
\begin{equation} \label{eq37}
0 < \tilde{l}_i \leq \liminf_{t \to \infty} z_i(t) \leq \limsup_{t \to \infty} z_i(t) \leq \tilde{u}_i \quad \text{for all $i$},
\end{equation}
then also
\begin{equation} \label{eq38}
\eta_i \ex (B_i / R_i(\tilde{l},\tilde{u}) \wedge D_i ) \leq \liminf_{t \to \infty} z_i(t) 
\leq \limsup_{t \to \infty} z_i(t) \leq \eta_i \ex ( B_i / r_i(\tilde{l},\tilde{u}) \wedge D_i ) \quad \text{for all $i$}.
\end{equation}
Indeed, by~\eqref{eq37}, for any $\eps \in (0,\min_{1 \leq i \leq I} \tilde{l}_i)$, there exists a~$t_\eps$ such that
\[
\tilde{l}_i - \eps \leq z_i(t) \leq \tilde{u}_i + \eps \quad \text{for all $i$ and $t \geq t_\eps$}.
\]
Introduce the vectors
\[
\tilde{l} - \eps := (\tilde{l}_1 - \eps, \ldots, \tilde{l}_I - \eps), \quad \tilde{u} + \eps := (\tilde{u}_1 + \eps, \ldots, \tilde{u}_I + \eps).
\]
Then
\[
r_i(\tilde{l} - \eps,\tilde{u} + \eps) (t-s) \leq S_i(z,s,t) \leq R_i(\tilde{l} - \eps,\tilde{u} + \eps) (t-s) \quad \text{for $t \geq s \geq t_\eps$},
\]
which, when plugged into the shifted fluid model equation~\eqref{eq:fms_shifted}, implies that
\begin{align*}
z_i(t) \geq& \ \eta_i \int_{t_\eps}^t \mathbb{P} \{ B_i \geq R_i(\tilde{l} - \eps,\tilde{u} + \eps) (t-s), D_i \geq (t-s) \} ds, \\ 
z_i(t) \leq& \ \zeta_i(t_\eps) \bigl( [S_i(z,t_\eps,t),\infty) \times [t - t_\eps, \infty) \bigr) \\
&+ \eta_i \int_{t_\eps}^t \mathbb{P} \{ B_i \geq r_i(\tilde{l} - \eps,\tilde{u} + \eps) (t-s), D_i \geq (t-s) \} ds \quad \text{for $t \geq t_\eps$},
\end{align*}
where $\zeta(\cdot)$ is the corresponding MVFMS. Taking $t \to \infty$ in the last two inequalities, we obtain
\[
\eta_i \ex  ( B_i / R_i(\tilde{l} - \eps,\tilde{u} + \eps) \wedge D_i ) \leq \liminf_{t \to \infty} z_i(t) \leq \limsup_{t \to \infty} z_i(t) \leq \eta_i \ex  ( B_i / r_i(\tilde{l} - \eps,\tilde{u} + \eps) \wedge D_i ),
\]
and then~\eqref{eq38} follows as $\eps \to 0$.

Now we will iterate \eqref{eq37}--\eqref{eq38}. The rate constraints plugged into~\eqref{eq:nfms} imply the initial bounds
\[
0 < l_i^0 :=  \eta_i \ex ( B_i / m_i \wedge D_i ) \leq \liminf_{t \to \infty} z_i(t) \leq \limsup_{t \to \infty} z_i(t) \leq \eta_i \ex D_i =: u_i^0 \quad \text{for all $i$},
\]
and then \eqref{eq37}--\eqref{eq38} yield the recursive bounds 
\begin{equation} \label{eq39}
\begin{split}
l_i^k &:=  \eta_i \ex ( B_i / R_i(l^{k-1},u^{k-1}) \wedge D_i ) \leq \liminf_{t \to \infty} z_i(t), \\
u_i^k &:=  \eta_i \ex ( B_i / r_i(l^{k-1},u^{k-1}) \wedge D_i ) \ \geq \limsup_{t \to \infty} z_i(t) \quad \text{for all $k \in \nat$ and $i$}.
\end{split}
\end{equation}

The sequence $\{ l^k \}_{k \in \nat}$ is non-decreasing and bounded from above by~$u^0$. The sequence $\{ u^k \}_{k \in \nat}$ is non-increasing and bounded from below by~$l^0$. Hence, there exist the limits $\lim l^k =: l$ and $\lim u^k =: u$. In~\eqref{eq39}, let $k \to \infty$, then~\eqref{eq:bounds_fms} follows.

Note finally that the recursive bounds $\{ l^k \}_{k \in \nat}$ and $\{ u^k \}_{k \in \nat}$ as well as their limits $l$ and $u$ do not depend on a particular NFMS. 
\end{proof}

\paragraph{Asymptotic stability of an invariant fluid model solution} It is tractable to assume that transfer rates in a bandwidth-sharing network decrease as its population grows. In particular, tree networks satisfy this property, see~\cite{BEZ09}.
\begin{definition}
If $z' \geq z \in \pos^I$ implies $\lmb(z') \leq \lmb(z)$, the network is called {\it monotone}.
\end{definition}

For monotone networks, the system of equations~\eqref{eq:bounds_fms} decomposes into two independent systems of equations for the lower bound~$l$ and for the upper bound~$u$:
\begin{subequations}
\begin{align}
l_i &= \eta_i \ex (B_i/\lmb_i(l) \wedge D_i ) \quad \text{for all~$i$},   \label{eq:mntn_lb} \\
u_i &= \eta_i \ex (B_i/\lmb_i(u) \wedge D_i ) \quad \text{for all~$i$} \label{eq:mntn_ub},
\end{align}
\end{subequations}
which implies the following result.
\begin{corollary} \label{cor:stable_fixed_point}
Suppose that the network is monotone and has a unique invariant FMS $(\zeta^\ast, z^\ast)$. Then any FMS $(\zeta,z)(t) \to (\zeta^\ast, z^\ast)$ as $t \to \infty$.
\end{corollary}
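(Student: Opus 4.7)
}
The plan is to exploit the monotonicity assumption to turn the coupled system~\eqref{eq:bounds_fms} from Theorem~\ref{th:stable_fixed_point} into two independent copies of the invariant NFMS equation, use uniqueness to collapse both asymptotic bounds onto $z^\ast$, and then upgrade numeric convergence $z(t)\to z^\ast$ to weak convergence $\zeta(t)\to\zeta^\ast$ by passing to the limit in the fluid model equation~\eqref{eq:mvfms} evaluated on corner sets.

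First I would observe that under monotonicity, for $l\leq z\leq u$ one has $\lmb_i(u)\leq\lmb_i(z)\leq\lmb_i(l)$, so $r_i(l,u)=\lmb_i(u)$ and $R_i(l,u)=\lmb_i(l)$. Substituted into~\eqref{eq:bounds_fms}, this yields exactly~\eqref{eq:mntn_lb} and~\eqref{eq:mntn_ub}. Comparing these to~\eqref{eq:fixed_point}, both $l$ and $u$ are seen to be invariant NFMS's. By the uniqueness hypothesis, $l=u=z^\ast$, and combined with Theorem~\ref{th:stable_fixed_point} this gives $\lim_{t\to\infty}z_i(t)=z_i^\ast$ for every~$i$.

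Next, since $z^\ast \in \pos^I$ and $\lmb(\cdot)$ is continuous in the interior of $\nneg^I$ (locally Lipschitz, cf.\ Reed~\&~Zwart~\cite{RZ10}), the numerical convergence $z(t)\to z^\ast$ yields $\lmb_i(z(u))\to\lmb_i(z^\ast)$ for $u\to\infty$, and therefore $S_i(z,t-\tau,t)=\int_{t-\tau}^{t}\lmb_i(z(u))\,du\to\lmb_i(z^\ast)\tau$ for each fixed $\tau\geq 0$ as $t\to\infty$. Evaluating~\eqref{eq:mvfms} on a corner set $A=[x,\infty)\times[y,\infty)\in\clc$ and substituting $\tau=t-s$,
\[
\zeta_i(t)(A) = z_i^0\,\pr\{B_i^0\geq x+S_i(z,0,t),\, D_i^0\geq y+t\} + \eta_i\int_0^{t}\pr\{B_i\geq x+S_i(z,t-\tau,t),\, D_i\geq y+\tau\}\,d\tau.
\]
The first summand vanishes as $t\to\infty$ since $\pr\{D_i^0\geq y+t\}\to 0$. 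The integrand in the second summand is dominated by $\pr\{D_i\geq\tau\}$, which is integrable on $\nneg$ because $\ex D_i<\infty$; dominated convergence together with the invariant measure formula~\eqref{eq:fixed_msr} then gives
\[
\zeta_i(t)(A)\longrightarrow \eta_i\int_0^\infty \pr\{B_i\geq x+\lmb_i(z^\ast)\tau,\, D_i\geq y+\tau\}\,d\tau = \zeta_i^\ast(A).
\]

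Finally I would upgrade this pointwise convergence on $\clc$ to weak convergence in $\msr^I$. We already have mass convergence $\zeta_i(t)(\nneg^2)=z_i(t)\to z_i^\ast=\zeta_i^\ast(\nneg^2)$. The collection $\clc$ is a $\pi$-system generating the Borel $\sigma$-algebra of $\nneg^2$, so an inclusion-exclusion argument extends convergence on $\clc$ to convergence on the algebra of finite unions of rectangles $[x,x')\times[y,y')$ that are continuity sets of $\zeta_i^\ast$; combined with mass convergence, this yields weak convergence by the Portmanteau theorem. I expect Step~4 (the upgrade to weak convergence) to be the only step requiring care, since one has to handle possible atoms of $\zeta_i^\ast$; however, since $\zeta_i^\ast$ inherits its regularity from~\eqref{eq:fixed_msr} and the flow of $\theta_i$ under the map $(b,d)\mapsto(b+\lmb_i(z^\ast)s,d+s)$ smooths the measure in the $s$-direction, the set of non-continuity sets is $\zeta_i^\ast$-negligible and the argument goes through without additional hypothesis.
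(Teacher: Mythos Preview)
Your argument for numeric convergence $z(t)\to z^\ast$ is identical to the paper's: monotonicity collapses $r_i(l,u)=\lmb_i(u)$ and $R_i(l,u)=\lmb_i(l)$, so \eqref{eq:bounds_fms} splits into two copies of the fixed-point equation~\eqref{eq:fixed_point}, and uniqueness forces $l=u=z^\ast$.

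For the measure-valued part $\zeta(t)\to\zeta^\ast$, your route differs from the paper's. The paper (Appendix) works directly in the Prokhorov metric: it uses the \emph{shifted} equation~\eqref{eq:mvfms_shifted} from a late time $\tau_\dlt$, sandwiches $S_i(z,s,t)$ between $r_i^\dlt(t-s)$ and $R_i^\dlt(t-s)$, and produces the two-sided bounds $\zeta_i(t)(A)\leq\zeta_i^\ast(A^\eps)+\eps$ and $\zeta_i^\ast(A)\leq\zeta_i(t)(A^\eps)+\eps$ uniformly over all Borel $A$. You instead apply dominated convergence to the unshifted equation~\eqref{eq:mvfms} on each corner set and then upgrade to weak convergence. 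Your DCT step is correct (for fixed $x$, the set of $\tau$ where $x+\lmb_i(z^\ast)\tau$ hits an atom of the first marginal of $\theta_i$ is countable, hence Lebesgue-null), and your remark that the $s$-integration in~\eqref{eq:fixed_msr} forces both projections of $\zeta_i^\ast$ to be atom-free is exactly what is needed.

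The only place that deserves more detail is your final upgrade from corner-set convergence to weak convergence. Convergence on a generating $\pi$-system plus mass convergence is not \emph{automatically} weak convergence; you should insert a tightness step. From~\eqref{eq:mvfms} one gets the uniform tail bound $\zeta_i(t)([M,\infty)\times\nneg)\leq z_i^0\,\pr\{B_i^0\geq M\}+\eta_i\,\ex[D_i\ind\{B_i\geq M\}]$, and symmetrically in the second coordinate; together with $\sup_t z_i(t)<\infty$ (Lemma~\ref{lem:fms_bounded}) this gives relative compactness. Then along any weakly convergent subsequence $\zeta_i(t_k)\Rightarrow\mu$, the $\mu$-continuity corner sets are co-countable in each coordinate, still form a $\pi$-system generating the Borel $\sigma$-algebra, and on them $\mu$ agrees with $\zeta_i^\ast$; hence $\mu=\zeta_i^\ast$. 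The paper's Prokhorov approach bypasses this bookkeeping by handling all Borel sets simultaneously via $\eps$-enlargements, at the cost of a slightly more intricate estimate; your approach is more elementary on individual sets but needs this extra compactness/identification argument spelled out.
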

Indeed, both~\eqref{eq:mntn_lb} and~\eqref{eq:mntn_ub} coincide with the fixed point equation~\eqref{eq:fixed_point}, and since Corollary~\ref{cor:stable_fixed_point} assumes that the latter equation has a unique solution $z^\ast$,  it immediately follows by Theorem~\ref{th:stable_fixed_point} that, for any NFMS $z(\cdot)$,
\[
l_i = \liminf_{t \to \infty} z_i(t) = \limsup_{t \to \infty} z_i(t) = u_i = z_i^\ast \quad \text{for all $i$}.
\]
In the Appendix we also show that $z(t) \to z^\ast$ implies $\zeta(t) \to \zeta^\ast$.

\paragraph{Example: Single Link} The sufficient condition for uniqueness of an invariant FMS given by Theorem~\ref{th:unique_fixed_point} is sometimes also necessary. Consider, for example, processor sharing in critical load, that is $J=I=1$ and (omitting the link and class indices) $\rho = C$. In this case, the fixed point equation~\eqref{eq:fixed_point} looks like
\[
z = \eta \ex \left( \frac{B}{C/z \wedge m} \wedge D \right),
\]
which, for $z$ such that $C/z \leq m$ and $B z/C \leq D$ a.s., reduces to
\[
z = \eta \ex (B z/C \wedge D) = \eta \ex B z/C = \rho z /C = z.
\]
I.e. any $z \in [C/m, C \inf D/B]$ is an invariant NFMS. In particular, if $\inf D/B > 1/m$, which violates the assumption of Theorem~\ref{th:unique_fixed_point}, then there is a continuum of invariant FMS's.

For a single link critically loaded by multiple classes of flows, we have an analogous result, which is more complicated to derive and therefore the proof is postponed to Section~\ref{sec:proofs_fluid_model}.

\begin{theorem} \label{th:single_link}
Assume that $J=1$ (in what follows we omit the link index), and that the utility functions are $\clu_i(x) = \kappa_i \log x$. If $\sum_{i=1}^I \eta_i \ex (B_i/m_i \wedge D_i) \neq C$, then there is a unique invariant FMS. Otherwise there might be a continuum of invariant FMS's.
\end{theorem}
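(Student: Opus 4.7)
The plan is to reduce invariant FMS's to a scalar fixed-point equation for the Lagrange multiplier of the link capacity constraint. With $J=1$ and $\mathcal{U}_i(x)=\kappa_i\log x$, the KKT conditions for~\eqref{eq:def_lmb} give $\lmb_i(z)=\min(m_i,\kappa_i/\gamma(z))$, where $\gamma(z)\ge 0$ is zero if the capacity constraint is slack and otherwise pinned down by $\sum_i z_i\lmb_i(z)=C$. Combining this with the identity $\lmb_i^\ast z_i^\ast=g_i(\lmb_i^\ast)$ from~\eqref{eq:fixed_point_equiv}, invariant FMS's are in one-to-one correspondence with solutions $\gamma^\ast\ge 0$ of either (a) $\gamma^\ast=0$, $\lmb_i^\ast=m_i$, and $\sum_i g_i(m_i)\le C$; or (b) $\gamma^\ast>0$ and $h(\gamma^\ast)=C$, where $h(\gamma):=\sum_i g_i(\min(m_i,\kappa_i/\gamma))$.

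Next I would establish the properties of $h$ used in the case analysis. The function $h$ is continuous and non-increasing on $[0,\infty)$, with $h(0)=\sum_i g_i(m_i)$ and $h(\infty)=0$. The key structural fact is that each summand $g_i(\min(m_i,\kappa_i/\gamma))$ is constant equal to $g_i(m_i)$ on a maximal initial interval $[0,\kappa_i/(m_i\wedge\beta_i)]$ (with $\beta_i:=\sup(B_i/D_i)=1/\inf(D_i/B_i)$) and strictly decreasing afterward; this rests on $g_i$ being strictly increasing on $[0,\beta_i]$ and constant equal to $\rho_i$ on $[\beta_i,\infty)$, which I would verify by a direct monotone-convergence computation.

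Uniqueness of an invariant FMS would then follow by comparing the bandwidth-based critical-load quantity $\sum_i g_i(m_i)$ (which is the natural reformulation of the theorem's hypothesis) to $C$. If $\sum_i g_i(m_i)<C$, only branch~(a) is feasible and gives the unique FMS with $z_i^\ast=\eta_i\ex(B_i/m_i\wedge D_i)$. If $\sum_i g_i(m_i)>C$, branch~(a) is infeasible while branch~(b) admits at least one solution $\gamma^\ast$ by the intermediate value theorem; were there two solutions $\gamma_1<\gamma_2$, monotonicity of $h$ would force $h\equiv C$ on $[\gamma_1,\gamma_2]$, each summand would be constant there, and by the structural fact each such constant equals $g_i(m_i)$, giving $C=\sum_i g_i(m_i)$, a contradiction. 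For the ``otherwise'' clause, when $\sum_i g_i(m_i)=C$ and additionally $\beta_i<m_i$ for some $i$ (equivalently $\inf(D_i/B_i)>1/m_i$, the negation of the hypothesis of Theorem~\ref{th:unique_fixed_point}), the interval on which $h$ is flat at $C$ is nontrivial, producing a continuum of $\gamma^\ast$'s and hence of invariant FMS's; the single-class example already displayed is exactly this mechanism.

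The main obstacle is the uniqueness step in the supercritical case: ruling out that $h$ can be flat at level $C$ on a nondegenerate interval without $C$ itself equalling $\sum_i g_i(m_i)$. The delicacy is that each summand $g_i(\min(m_i,\kappa_i/\gamma))$ can become locally constant for two distinct reasons---either the $\min$ saturates at $m_i$, or its value lies in the flat tail $[\beta_i,\infty)$ of $g_i$---and one has to verify that both mechanisms yield the same common constant $g_i(m_i)$, so that summing over~$i$ produces the required contradiction.
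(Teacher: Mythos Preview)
Your approach is essentially the paper's: both reduce the fixed-point equation together with the single-link KKT conditions to a scalar equation in the Lagrange multiplier of the capacity constraint. The paper parametrizes by $x=1/p$ and analyzes $\tilde g(x):=\sum_i g_i(m_i\wedge\kappa_i x)$ separately in underload, critical load, and overload; you parametrize by $\gamma=p$ and treat $h(\gamma)=\tilde g(1/\gamma)$ uniformly via its monotonicity and its initial flat segment at level $\sum_i g_i(m_i)$. Your ``structural fact'' that each $h_i$ equals $g_i(m_i)$ on $[0,\kappa_i/(m_i\wedge\alpha_i)]$ and is strictly decreasing thereafter is exactly the paper's observation (in the overload paragraph) that $\tilde g$ is strictly increasing on $[0,\max_i(m_i\wedge\alpha_i)/\kappa_i]$ and constant afterward. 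So the content is the same, with your presentation slightly more streamlined.

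Two small caveats. First, the map $\gamma^\ast\mapsto z^\ast(\gamma^\ast)$ is onto fixed points but not one-to-one: any $\gamma$ in $[0,\min_j\kappa_j/m_j]$ gives $\lambda_i^\ast=m_i$ for all $i$ and hence the same $z^\ast$. So ``one-to-one correspondence'' overstates things; this is harmless for the non-critical regimes because there the set of admissible $\gamma^\ast$ is already a singleton, and that is all you use. Second, in the critical multi-class case your claim that ``$\beta_i<m_i$ for some $i$'' alone produces a continuum of fixed points is too strong: a continuum of admissible $\gamma^\ast$ always exists in critical load, but it yields a continuum of \emph{distinct} $z^\ast$ only when the flat interval of $h$ extends strictly beyond $\min_j\kappa_j/m_j$, which is the extra side condition $\alpha_1/\kappa_1\ge\max_{i\neq 1}m_i/\kappa_i$ appearing in the paper's second critical-load example. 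Since the theorem only asserts that there \emph{might} be a continuum, and your single-class example (which is also the paper's) already exhibits one, this does not affect the result.
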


\section{Sequence of stochastic models and fluid limit theorem} \label{sec:fluid_limits}
In this section we study the asymptotic behavior of the stochastic network described in Section~\ref{sec:stochastic_model} as its global parameters --- capacities and arrival rates --- grow large, while the characteristics of an individual flow remain of a fixed order. We refer to this scaling as the large capacity regime.

\paragraph{Large capacity scaling} To a~sequence $\clr$ of positive numbers increasing to $\infty$, we associate a sequence of stochastic models as defined in Section~\ref{sec:stochastic_model}. We mark all parameters associated with the $r$-th model with a~superscript~$r$ and assume the following:
\begin{itemize}
\item[(A.1)] $\phantom{t}$ network structure, rate constraints and utility function are the same in all models: $\phantom{tt}$~$A^r = A$,~$m^r = m$ and $\clu_i^r(\cdot) = \clu_i(\cdot)$ for all~$i$;
\item[(A.2)] $\phantom{t}$ link capacities grow linearly in $r$: $C^r = r C$;
\item[(A.3)] $\phantom{t}$ arrival rates grow linearly in $r$: $\ove^r(\cdot) := E^r(\cdot)/r \Rightarrow \eta(\cdot)$ as $r \to \infty$, 
where $\eta(t) := t \eta$ and $\phantom{tt}$ $\eta \in (0,\infty)^I$;
\item[(A.4)] $\phantom{t}$ flow sizes and patience times remain of a fixed order: for all~$i$, $(B_i^r, D_i^r) \Rightarrow (B_i, D_i)$, $\phantom{tt}$ where $(B_i,D_i)$ are $\pos^I$-valued r.v.'s with distributions $\theta_i$ and finite mean values $\phantom{tt}$~$(1/\mu_i, 1/\nu_i)$, and also $(1/\mu_i^r, 1/\nu_i^r) \to (1/\mu_i, 1/\nu_i)$;
\item[(A.5)] $\phantom{t}$ the scaled initial configuration converges in distribution to a random vector of finite $\phantom{tt} \ $measures: $\cloz^r(0) :=  \clz^r(0)/r \Rightarrow \zeta^0$;
\item[(A.6)] $\phantom{t}$ the projections $\zeta^0_i(\cdot \times \nneg)$ and $\zeta^0_i(\nneg \times \cdot)$ are a.s. free of atoms for all~$i$.
\end{itemize}
\vspace{2pt}

\paragraph{Fluid limit theorem} In the large capacity regime, the stochastic model defined is Section~\ref{sec:stochastic_model} converges to the fluid model defined in Section~\ref{sec:fluid_model}. More precisely, introduce the scaled state descriptors and population processes
\[
\cloz^r(\cdot) := \clz^r(\cdot) / r, \quad \oz^r(\cdot) := \la 1, \cloz^r(\cdot) \ra = Z^r(\cdot) / r.
\]
Also introduce the scaled versions of the two components of the state descriptor:
\begin{equation*}
\begin{alignedat}{2}
\clozinit(\cdot) &:= \clzinit(\cdot) / r,& \quad \quad \quad \ozinit(\cdot) &:= \la 1, \clozinit(\cdot) \ra = \zinit(\cdot) / r,\\
\cloznew(\cdot) &:= \clznew(\cdot) / r,& \quad \quad \quad \oznew(\cdot) &:= \la 1, \cloznew(\cdot) \ra = \znew(\cdot) / r.
\end{alignedat}
\end{equation*}

\begin{remark} Let $\lmb(\cdot)$ be the rate allocation function in the unscaled network, then
\begin{align*}
\lmb^r(z) =: & \, \argmax_{\begin{subarray}{l} A (\lmb \ast z) \leq r C \\ \lmb \leq m \end{subarray}} \sum\nolimits_{i=1}^I z_i \, \mathcal{U}_i( \lmb_i)
\\
= & \, \argmax_{\begin{subarray}{l} A (\lmb \ast z/r) \leq C \\ \lmb \leq m \end{subarray}} \sum\nolimits_{i=1}^I (z_i/r) \, \mathcal{U}_i( \lmb_i) =: \lmb(z / r),
\end{align*}
and
\[
S_i^r(Z^r,s,t) := \int_s^t \lmb_i^r( Z^r(u) )du = \int_s^t \lmb_i( \oz^r(u) )du =: S_i(\oz^r,s,t).
\]
\end{remark}

We now provide the definition of a fluid limit followed by the main result of this section.
 
\begin{definition}
We refer to weak limits along convergent subsequences $\{ (\cloz^q, \oz^q)(\cdot) \}_{q \in \clq}$, $\clq \subseteq \clr$, as {\it fluid limits}.
\end{definition}

\begin{theorem} \label{th:fluid_limits}
Under the assumptions \textup{(A.1)--(A.6)}, the sequence $\{ (\cloz^r, \oz^r)(\cdot) \}_{r \in \clr}$ is tight in $\skr_{\nneg \to \msr^I} \times \skr_{\nneg \to \nneg^I}$, and all fluid limits are a.s. FMS's for the data $(\eta, \theta, \zeta^0)$. In particular, if~there is a unique FMS $(\clz,Z)(\cdot)$ for the data $(\eta, \theta, \zeta^0)$, then $(\cloz^r, \oz^r)(\cdot) \Rightarrow (\clz,Z)(\cdot)$ as $r \to \infty$.
\end{theorem}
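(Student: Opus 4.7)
The plan is to follow the standard two-step programme: first establish tightness of $\{(\cloz^r,\oz^r)(\cdot)\}_{r\in\clr}$ in $\skr_{\nneg\to\msr^I}\times\skr_{\nneg\to\nneg^I}$, then show that every weak limit point almost surely satisfies the fluid model equation~\eqref{eq:mvfms}, so by Remark~\ref{rem1} it is an FMS for $(\eta,\theta,\zeta^0)$. The ``in particular'' statement is immediate: if the FMS is unique, every subsequence of $\{(\cloz^r,\oz^r)\}$ has a further subsequence converging to $(\clz,Z)$, so the whole sequence converges.

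Tightness of $\{\oz^r\}$ comes first. The decomposition $\oz^r=\ozinit+\oznew$ gives pathwise bounds $\ozinit(t)\le\oz^r(0)$ and $\oznew(t)\le\ove^r(t)$, and both right-hand sides are $C$-tight by~(A.5) and~(A.3). Since $\lmb\le m$, between arrivals $\oz^r$ moves Lipschitz-continuously by Lemma~\ref{lem:Lmb_continuous}, and arrival jumps are of size $1/r\to 0$, yielding $C$-tightness of $\{\oz^r\}$. Tightness of $\{\cloz^r\}$ in $\skr_{\nneg\to\msr^I}$ then reduces to compact containment in $\msr^I$: for any $T,\eps>0$ one chooses $N$ so large that the tightness of $(B_i^r,D_i^r)$ from~(A.4) combined with~(A.5) makes all marks contributing to $\cloz^r_i(t)$, $t\in[0,T]$, lie in $[0,N]^2$ with probability at least $1-\eps$, confining $\cloz^r_i(t)$ to a compact subset of $\msr$.

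To identify the limits, fix a subsequence $(\cloz^q,\oz^q)\Rightarrow(\zeta,z)$ and, by Skorokhod's representation, assume almost sure convergence, uniform on compacts (using $C$-tightness of $\oz^r$). Evaluating the state-descriptor formula~\eqref{eq:state_descriptor} on a corner set $A\in\clc$ expresses $\cloz^q_i(t)(A)$ as a scaled sum of indicators over the initial and arriving marks. Continuity of $\Lmb$ (Lemma~\ref{lem:Lmb_continuous}) together with the bound $\lmb\le m$ allows dominated convergence to push $\oz^q\to z$ through the service integral, giving $S_i(\oz^q,s,t)\to S_i(z,s,t)$ uniformly in $s\in[0,t]$. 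A Glivenko--Cantelli step for the i.i.d.\ marks combined with the functional LLN~(A.3) replaces the empirical measure of the new marks by $\eta_i\,ds\otimes\theta_i$ on $[0,t]\times\nneg^2$, while the initial marks are handled analogously via~(A.5). The two resulting terms match the right-hand side of~\eqref{eq:mvfms} on corner sets, and Dynkin's $\pi$-$\lambda$ theorem extends the identity to all Borel sets.

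The main obstacle is the continuity required to push weak convergence through the indicator functionals, which is exactly what (A.6) is designed to supply. The map $x\mapsto\mathbb{I}\{x\in A+(a,b)\}$ is discontinuous on the boundary of the translated corner, so the limit measures must charge those boundaries with zero mass. Condition~(A.6) is precisely atom-freeness of the projections of $\zeta^0_i$; for new flows, the time integration $\int_0^t\cdots ds$ smooths out any atoms of $\theta_i$ because Lebesgue-almost every $s\le t$ avoids the boundary. The same argument shows the projections of $\zeta(t)$ remain atom-free for every $t$, and continuity of $t\mapsto\zeta(t)$ then follows from continuity of $S_i(z,\cdot,\cdot)$, so $(\zeta,z)\in\mathbf{C}_{\nneg\to\msr^I}\times\mathbf{C}_{\nneg\to\nneg^I}$ as required by Definition~\ref{def:FMS}. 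Secondary technical points---uniform integrability of empirical tails and coupling the arrival marks with the $C$-tight process $\oz^q$---are handled by the Skorokhod-coupling techniques used for related models in~\cite{GRZ08,GW09}.
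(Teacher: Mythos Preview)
Your proposal has the right high-level structure, but contains two genuine gaps that the paper's proof addresses explicitly.

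First, your oscillation-control argument is incorrect. The process $\oz^r$ is piecewise constant with jumps of size $1/r$ at both arrivals \emph{and} departures; it does not ``move Lipschitz-continuously between arrivals.'' Controlling the modulus of continuity of $\cloz^r$ (or $\oz^r$) therefore requires bounding the number of departures in a short interval $[t,t+h]$, which is governed by the mass $\cloz^r(t)$ assigns to the thin strip $V_0^{\|m\|h}\cup H_0^h$ near the axes. The paper devotes an entire subsection (Lemmas~\ref{lem:as_reg_z0}--\ref{lem:as_reg_zr}) to proving \emph{asymptotic regularity}: uniformly in $t\in[0,T]$, the scaled state descriptors put arbitrarily small mass on thin horizontal and vertical strips. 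This is the real work behind oscillation control (Section~\ref{sec:oscillation_control}), and you have skipped it. Your remark that (A.6) and the time integral ``smooth out atoms'' concerns only the limit measure, not the uniform prelimit estimate that tightness requires.

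Second, your claim that ``continuity of $\Lambda$ (Lemma~\ref{lem:Lmb_continuous}) \ldots\ allows dominated convergence to push $\oz^q\to z$ through the service integral'' does not suffice. The integrand in $S_i$ is $\lambda_i(\oz^q(u))=\Lambda_i(\oz^q(u))/\oz^q_i(u)$, and $\lambda_i$ is \emph{not} continuous on the boundary $\{z_i=0\}$. To pass to the limit you must first know that the limit satisfies $z_i(u)>0$ for all $u>0$. The paper establishes this separately (Lemmas~\ref{lem:inf_server_queue} and~\ref{lem:fluid_limit_pos}) via an infinite-server lower bound coming from the rate constraints: each flow stays at least $B_{ik}^r/m_i\wedge D_{ik}^r$, so $Z_i^r$ dominates an $M/G/\infty$ queue whose scaled length is bounded away from zero on every $[\delta,\Delta]$. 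Without this positivity step, neither the convergence $S_i(\oz^q,s,t)\to S_i(z,s,t)$ nor its uniformity in $s$ is justified, and the identification of the limit as an FMS breaks down.
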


The proof follows in Section~\ref{sec:proof_fluid_limits}. To show tightness we adjust the techniques of~\cite{GW09} to the two-dimensional case, since in~\cite{GW09} flows are patient and state descriptors are vectors of measures on $\nneg$. The proof of convergence to FMS's follows the lines of that in~\cite{GRZ08}. It uses the boundedness of fluid limits away from zero, and the key difference is that in~\cite{GRZ08} this property is guaranteed by the overload regime, while in our model it holds in any load regime due to the rate constraints.

\section{Convergence of stationary distributions} \label{sec:stat_distributions}
Assume that, in the stochastic model defined in Section~\ref{sec:stochastic_model}, the arrival processes are Poisson of rates $\eta_1, \ldots, \eta_I$. Then there exists a unique stationary (and also limiting as $t \to \infty$) distribution of the state descriptor $\clz(\cdot)$. Indeed, without loss of generality, there are i.i.d. r.v.'s $\{  \widetilde{D}_{ik} \}_{k \in \nat, 1\leq i \leq I}$ distributed as $\max_{1 \leq i \leq I} D_i$ and such that a.s. $D_{ik} \leq \widetilde{D}_{ik}$ for all $k$ and $i$. Then the total population $\sum_{i=1}^I Z_i(\cdot)$ of the network is a.s. and within the whole time horizon $\nneg$ bounded from above by the length of the $M/G/\infty$ queue with the following parameters. At time $t = 0$, there are $\sum_{i=1}^I Z_i(0)$ customers in the queue whose service times are patience times of initial flows in the network. The input process for the queue is the composition of those for the network, and hence is Poisson of rate $\sum_{i=1}^I \eta_i$. Service times of new customers in the queue are drawn from the sequence $\{  \widetilde{D}_{ik} \}_{k \in \nat, 1\leq i \leq I}$ of upper bounds for patience times of new flows in the network. As any other $M/G/\infty$ queue, the defined queue is regenerative. The instants when a customer enters the empty queue form an embedded renewal process whose cycle length is non-lattice and has a finite mean value $\exp(\sum_{i=1}^I \eta_i \ \ex \widetilde{D}_{11}) / \sum_{i=1}^I \eta_i$. With respect to this renewal process , the state descriptor $\clz(\cdot)$ is also regenerative. Then, by~\cite[Chapter~V.I, Theorem~1.2]{Asmussen}, there exists a limiting distribution for $\clz(\cdot)$.

Now consider a sequence of stochastic models as defined in Section~\ref{sec:stochastic_model} that satisfies the assumptions (A.1), (A.2), (A.4) (see Section~\ref{sec:fluid_limits}) and
\begin{itemize}
\item[(A$'$.3)] $\phantom{t}$ the input processes $E_1^r(\cdot), \ldots, E_I^r(\cdot)$ are independent Poisson processes of rates $\eta_1^r, \ldots, \eta_I^r$, $\phantom{tt}$ and  
$\eta^r/r \to \eta \in\pos^I$ as $r \to \infty$.
\end{itemize}

Let $\cly^r$ have the stationary distribution of $\clz^r(\cdot)$ and put $Y^r := \la 1, \cly^r \ra$. Introduce also the scaled versions
\[
\cloy^r := \cly^r / r, \quad \oy^r := \la 1, \cloy^r \ra = Y^r /r.
\]

We now have the following result.

\begin{theorem} \label{th:stationary_distributions}
Under the assumptions \textup{(A.1), (A.2), (A$'$.3)} and \textup{(A.4)}, the sequence $\{(\cloy^r, \oy^r)\}_{r \in \clr}$ is tight, and any weak limit point $(\cly,Y)$ is a weak invariant FMS, i.e. there exists a stationary FMS $(\clz,Z)(t) \stackrel{\text{d}}{=} (\cly,Y)$, $t \geq 0$. In particular, by Corollary~\ref{cor:stable_fixed_point}, if the network is monotone and has a unique invariant FMS $(\zeta_\ast, z_\ast)$, then $(\cloy^r, \oy^r) \Rightarrow (\zeta_\ast, z_\ast)$ as $r \to \infty$.
\end{theorem}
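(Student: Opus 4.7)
The plan is first to establish tightness of $\{(\cloy^r, \oy^r)\}_{r \in \clr}$, then to identify each weak limit point as the time-marginal of a stationary FMS by invoking the fluid limit theorem (Theorem~\ref{th:fluid_limits}) with the limiting random initial condition.

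\emph{Tightness.} Using the $M/G/\infty$ comparison sketched just before the theorem statement, the stationary total population $Y^r = \la 1, \cly^r \ra$ is stochastically dominated by the stationary length of an $M/G/\infty$ queue with arrival rate $\sum_i \eta_i^r$ and service times distributed as $\max_i D_i^r$. Dividing by $r$ and invoking (A$'$.3) and (A.4), $\ex \oy^r \leq r^{-1} \sum_i \eta_i^r \, \ex \max_{i'} D_{i'}^r$ remains bounded, so $\{\oy^r\}$ is tight in $\nneg^I$ by Markov's inequality. For tightness of $\{\cloy^r\}$ in $\msr^I$ I would apply the Prokhorov-type criterion (tightness of total masses plus uniform control of mass on complements of compacts): the $i$-th coordinate is supported in $\bigcup_k \{(B_{i k}^r(t), D_{i k}^r(t))\}$, both coordinates being stochastically bounded by the generic $B_i^r \vee D_i^r$; a union bound combined with the same $M/G/\infty$ bound on the number of summands yields the required tail control.

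\emph{Identification of limits.} Pass along a convergent subsequence $\cloy^{r'} \Rightarrow \cly$, necessarily with $\oy^{r'} \Rightarrow Y := \la 1, \cly \ra$; by Skorokhod representation we may assume a.s. convergence. Initialize the $r'$-th network by $\clz^{r'}(0) := \cly^{r'}$, so that by stationarity $\clz^{r'}(t) \stackrel{\text{d}}{=} \cly^{r'}$ for every $t \geq 0$. In order to apply Theorem~\ref{th:fluid_limits} to this sequence, I need assumption (A.6) for the random initial datum $\cly$, namely that its coordinate projections are a.s. atom-free. This is precisely the content of Lemma~\ref{lem:stat_no_atoms} cited in the introduction: in the stationary regime all initial flows are flushed out after a (random but a.s. finite) time, so $\cly$ has the same law as the configuration of newly arrived flows present at stationarity; Poisson arrivals provide a continuous age distribution, which smears out atoms in the residual-patience coordinate, while continuity of $t \mapsto S_i(Z, U_{i k}, t)$ combined with Poisson arrival epochs does the same for the residual-size coordinate. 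With (A.6) verified, Theorem~\ref{th:fluid_limits} gives a further subsequence along which $(\cloz^{r'}, \oz^{r'})(\cdot) \Rightarrow (\clz, Z)(\cdot)$ with limit a.s.\ an FMS for the data $(\eta, \theta, \cly)$. Passing to the limit in $\clz^{r'}(t) \stackrel{\text{d}}{=} \cly^{r'}$ (using weak continuity of the evaluation map at fixed $t$, for which the sample-path continuity of FMS's guarantees that $t$ is a.s. a continuity point of the limit) yields $(\clz, Z)(t) \stackrel{\text{d}}{=} (\cly, Y)$ for every $t \geq 0$, which is the required weak invariance.

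\emph{Monotone unique case.} When the network is monotone and has a unique invariant FMS $(\zeta^\ast, z^\ast)$, Corollary~\ref{cor:stable_fixed_point} forces every FMS to converge to $(\zeta^\ast, z^\ast)$ as $t \to \infty$. Applied pathwise to the stationary FMS $(\clz, Z)$ constructed above, whose time-marginal is the constant-in-$t$ law of $(\cly, Y)$, this collapses $(\cly, Y)$ to the deterministic vector $(\zeta^\ast, z^\ast)$. Since every weak subsequential limit of $\{(\cloy^r, \oy^r)\}_{r \in \clr}$ equals this same deterministic object, the full sequence converges.

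\emph{Main obstacle.} The principal difficulty is the verification of (A.6) for the random limit $\cly$; a naive weak-limit argument does not preserve absence of atoms, so one must exploit the stationary structure via Lemma~\ref{lem:stat_no_atoms} and the $M/G/\infty$ domination to argue that in the limit only ``new'' flows with continuously distributed residual coordinates survive. A secondary technical point is ensuring that the limiting law $(\clz, Z)(t) \stackrel{\text{d}}{=} (\cly, Y)$ carries over despite $\cly$ being random rather than deterministic; handling this cleanly requires the fluid limit theorem with random initial conditions, which is already built into Theorem~\ref{th:fluid_limits}.
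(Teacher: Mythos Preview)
Your proposal is correct and follows essentially the same route as the paper: tightness via $M/G/\infty$ domination, verification of (A.6) for the random limit $\cly$ by invoking Lemma~\ref{lem:stat_no_atoms}, application of Theorem~\ref{th:fluid_limits} with the stationary initial condition, and the collapse to $(\zeta^\ast,z^\ast)$ in the monotone unique case via Corollary~\ref{cor:stable_fixed_point}.

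Two places where the paper is sharper than your sketch are worth noting. First, for the tail control in tightness you propose a union bound on the summands; the paper instead \emph{thins} the Poisson input: flows contributing to $\cloy_i^r(V_n^\infty)$ must have had initial size at least $n$, so they are dominated by an $M/G/\infty$ queue with arrival rate $\eta_i^r\,\pr^r\{B_i^r\geq n\}$ and service times $D_i^r$, giving the clean bound $\ex^r\cloy_i^r(V_n^\infty)\leq r^{-1}\eta_i^r\,\pr^r\{B_i^r\geq n\}\,\ex^r D_i^r$. The patience-direction tail $\cloy_i^r(H_n^\infty)$ is handled separately via Lemma~\ref{lem2}, which identifies the stationary residual-patience distribution in the $M/G/\infty$ bound as the equilibrium distribution of $D_i^r$. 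Second, your heuristic for Lemma~\ref{lem:stat_no_atoms} (continuous age distribution from Poisson arrivals) is not quite how the paper argues: the actual proof works entirely at the prelimit level, using stationarity to write $\cloy^q\stackrel{\text{d}}{=}\cloz^q(T)$ for a large $T$, bounding the contribution of initial flows at time $T$ by the $M/G/\infty$ tail, and controlling the new-flow contribution in thin strips uniformly in $q$ via the asymptotic regularity Lemma~\ref{lem:as_reg_znew}. Since you correctly defer to Lemma~\ref{lem:stat_no_atoms} rather than proving it yourself, this does not affect the validity of your outline.
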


The general strategy of the proof is adopted from~\cite[Theorem~3.3]{KR10}: we check that any convergent~subsequence of initial conditions $\{\cloz^q(0) \stackrel{\text{d}}{=} \cloy^q\}_{q \in \clq}$, $\clq \subseteq \clr$, $\cloy^q \Rightarrow \cly$, satisfies the assumptions of the fluid limit theorem (we only need to check \textup{(A.6)}). Then the corresponding subsequence $\{ \cloz^q(\cdot) \}_{q \in \clq}$ of the scaled state descriptors converges to an MVFMS that is stationary (i.e. $\cly$ is a~weak invariant MVFMS) since all $\cloz^q(\cdot)$ are stationary. 

The techniques we use to implement this strategy are different from the techniques of~\cite{KR10}, though. Our key instruments for establishing tightness are $M/G/\infty$ bounds, see Section~\ref{sec:proof_stat_distributions}. Below we present an elegant proof of \textup{(A.6)} for weak limit points of $\{\cloy^r\}_{r \in \clr}$.

\begin{lemma} \label{lem:stat_no_atoms}
Any weak limit point $\cly$ of  $\{\cloy^r\}_{r \in \clr}$ has both projections $\cly(\cdot \times \nneg)$ and $\cly(\nneg \times \cdot)$ a.s. free of atoms.
\end{lemma}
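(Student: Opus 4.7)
The strategy is to combine stationarity of $\clz^r(\cdot)$ (which reduces $\cly^r$ to its ``new-flow'' part) with the Poisson structure of arrivals and $M/G/\infty$ stochastic bounds.

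Start $\clz^r(\cdot)$ from $\clz^r(0)\stackrel{\mathrm{d}}{=}\cly^r$; stationarity gives $\clz^r(t)\stackrel{\mathrm{d}}{=}\cly^r$ for every $t\geq 0$. Decompose $\clz^r(t)=\clzinit^r(t)+\clznew^r(t)$. Since there are a.s.\ finitely many initial flows, each with a.s.\ finite patience time, $\clzinit^r(t)\equiv 0$ for $t$ exceeding the (a.s.\ finite) maximal initial patience time. Consequently, for any bounded continuous functional $F$ on $\msr^I$, $\ex F(\cloy^r)=\lim_{t\to\infty}\ex F(\cloznew^r(t))$, and it suffices to analyse the scaled new-flow measure.

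By the Poisson property, $\{(U^r_{ik},B^r_{ik},D^r_{ik})\}_k$ is a marked Poisson point process on $[0,t]\times\pos^2$ of intensity $\eta^r_i\,du\,\theta^r_i(db,dd)$. Conditioning on the path $\oz^r(\cdot)$ and applying Fubini, for any $x>0$,
\begin{equation*}
\ex\bigl[\cloznew^r_i(t)(\{x\}\times\nneg)\bigr]=\frac{\eta^r_i}{r}\,\ex\!\int_0^t\pr\bigl\{B^r_i=x+S_i(\oz^r,s,t),\,D^r_i>t-s\,\bigm|\,\oz^r\bigr\}\,ds.
\end{equation*}
Given $\oz^r$, the map $s\mapsto S_i(\oz^r,s,t)$ is continuous, and its ``flat'' regions coincide with the subset of $[0,t]$ where $\oz^r_i=0$. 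Because arrivals on route~$i$ are Poisson at rate $\eta^r_i\sim r\eta_i\to\infty$, idle periods have individual length $O(1/r)$ and aggregate length vanishing on compact time windows, so in the scaling limit this map is a.s.\ strictly decreasing. The level sets $\{s:S_i(\oz^r,s,t)=b-x\}$ are then singletons (even if $\theta_i^r$ has atoms), and the $ds$-integral vanishes; hence $\ex\,\cly_i(\{x\}\times\nneg)=0$ for every $x>0$. The argument for the second projection is analogous, using the strict monotonicity of $s\mapsto t-s$.

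To upgrade ``expected zero atom mass at every fixed point'' to ``a.s.\ atom-free projections'', I would combine the above with the stochastic domination of $\cly^r_i$ by the stationary state of an auxiliary $M/G/\infty$ queue (as in the preamble to Section~\ref{sec:stat_distributions}): the dominating measure is Poisson with absolutely continuous intensity, and scaling by $1/r$ sends it, by the law of large numbers, to a \emph{deterministic} diffuse measure. Thus every weak limit $\cly$ is a.s.\ dominated by such a measure, forcing its projections to be atom-free a.s. The main obstacle is precisely this last step: the first-moment calculation alone is insufficient (consider $\delta_U$ for $U$ uniform on $[0,1]$, with atom-free mean but a.s.\ a Dirac mass), so the concentration supplied by the Poisson LLN on top of the vanishing-idle-time estimate is essential.
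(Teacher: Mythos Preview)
Your decomposition into initial and new flows, and the use of stationarity to shift to a large time, are exactly the paper's opening move. But the upgrade from ``zero expected atom mass at each fixed point'' to ``a.s.\ atom-free projections'' has a genuine gap, and your proposed $M/G/\infty$ fix does not close it.

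For the second projection, Lemma~\ref{lem2} gives only $\cly^r_i(\nneg\times S)\leq_{\text{st}}\Pi(\eta_i^r\,\ex^r D_i^r\,\pr^r\{\tilde D_i^r\in S\})$ for each \emph{fixed} Borel $S$; this is stochastic domination set by set, not a pathwise coupling of the stationary measures valid for all $S$ simultaneously. So even though the scaled dominating Poisson point process concentrates on a diffuse deterministic limit, that does not force the weak limit $\cly$ to be a.s.\ atom-free (your own $\delta_U$ example survives this kind of bound). For the first projection the situation is worse: residual sizes depend on the random trajectory through $S_i(\oz^r,U^r_{ik},t)$, and there is simply no $M/G/\infty$ whose residual-time point measure dominates $\cly_i^r(\cdot\times\nneg)$. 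There is also a smaller issue upstream: your first-moment computation is for the prelimit at fixed $r$, where idle periods on route~$i$ need not have vanishing Lebesgue measure, and passing the atom-mass functional through the weak limit $r\to\infty$ is not justified since that functional is not continuous on~$\msr$.

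The paper sidesteps all of this by working with \emph{thin strips} rather than singletons. By \cite[Lemma~6.2]{GRZ08}, it suffices to find, for every $\delta,\eps>0$, an $a>0$ with
\[
\liminf_{q\to\infty}\pr^q\bigl\{\sup_{x\in\nneg}\|\cloy^q(H_x^{x+a})\|\vee\|\cloy^q(V_x^{x+a})\|\leq\delta\bigr\}\geq 1-\eps.
\]
Stationarity lets one evaluate $\cloy^q\stackrel{\rm d}{=}\cloz^q(T)$ at a large deterministic $T$; the initial-flow contribution is controlled in \emph{total mass} via Lemma~\ref{lem2} (choose $T$ so that $\|\oz^{q,\,\text{init}}(T)\|\leq\delta/2$ with high probability), and the new-flow contribution is handled by the asymptotic-regularity Lemma~\ref{lem:as_reg_znew}, which already delivers the required \emph{uniform-in-$x$} bound on strip masses for $\cloznew(T)$. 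That uniformity is exactly what a single-point first-moment argument cannot supply, and it is obtained by a finite partition of the $(B,D)$-plane combined with the functional LLN for the load process, not by domination.
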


\begin{proof} The key idea is the following. Consider the network in its stationary regime. Then, on one hand, it always has the same distribution, and on the other hand, all initial flows are gone at some point, and newly arriving flows do not accumulate along horizontal and vertical lines.

Let $\cly$ be the weak limit along a subsequence $\{\cloy^q\}_{q \in \clq}$, and run the $q$-th network starting from $\cloz^q(0) \stackrel{\text{d}}{=} \cloy^q$. By~\cite[Lemma~6.2]{GRZ08}, it suffices to show that, for any $\dlt > 0$ and $\eps > 0$, there exists an $a > 0$ such that
\begin{equation} \label{eq35}
\liminf_{q \to \infty} \pr^q \{ \sup_{x \in \nneg}\| \cloy^q(H_x^{x+a}) \| \vee \| \cloy^q(V_x^{x+a}) \| \leq \dlt \} \geq  1 - \eps,
\end{equation}
where $H_a^b := \nneg \times [a,b]$ and $V_a^b := [a,b] \times \nneg$ for all $b \geq a \geq 0$.

First we estimate the time when there is only a few (when scaled) initial flows left. The initial flows whose initial patience times are less than $t$ are already gone at time~$t$. Then Lemma~\ref{lem2} (see Section~\ref{sec:proof_stat_distributions}) implies that (recall that $\Pi(\lmb)$ stands for the Poisson distribution with parameter~$\lmb$)
\begin{align*}
\oz_i^{q,\, \text{init}}(t) \leq \cloz_i^q(0)(\nneg \times [t,\infty)) &\stackrel{\text{d}}{=}\cloy_i^q(\nneg \times [t,\infty)) \\
&\leq_{\text{st}} \frac{1}{q} \Pi(\eta_i^q \int_t^\infty \pr^q \{ D_i^q > y \} dy) \Rightarrow \eta_i \int_t^\infty \pr \{ D_i > y \} dy.
\end{align*}
Take $T$ such that $\max_{1 \leq i \leq I} \eta_i \int_t^\infty \pr \{ D_i > y \} dy < \dlt /2$, then
\begin{equation*}
\lim_{q \to \infty} \pr^r \{ \| \oz^{q,\, \text{init}}(T) \| \leq \dlt /2 \} = 1.
\end{equation*}
Now, in Lemma~\ref{lem:as_reg_znew} (see Section~\ref{sec:proof_fluid_limits}), we prove that newly arriving customers do not accumulate in thin horizontal~and vertical strips, i.e. there exists an $a>0$ such that
\begin{equation*}
\liminf_{q \to \infty} \pr^q \{ \sup_{t \in [0,T]} \sup_{x \in \nneg}\| \cloz^{q, \, \text{new}}(t)(H_x^{x+a}) \| \vee \| \cloz^{q, \, \text{new}}(t)(V_x^{x+a}) \| \leq \dlt/2\} \geq  1 - \eps.
\end{equation*}
Finally, because of stationarity of $\cly^q$,
\begin{align*}
&\, \pr^q \{ \sup_{x \in \nneg}\| \cloy^q(H_x^{x+a}) \| \vee \| \cloy^q(V_x^{x+a}) \| \leq \dlt \} \\
=& \, \pr^q \{ \sup_{x \in \nneg}\| \cloz^q(T)(H_x^{x+a}) \| \vee \| \cloz^q(T)(V_x^{x+a}) \| \leq \dlt \} \\
\geq& \, \pr^q\{ \|\oz^{q, \, \text{init}}(T) \| \leq \dlt/2, \sup_{x \in \nneg}\| \cloz^{q,\, \text{new}}(T)(H_x^{x+a}) \| \vee \| \cloz^{q,\, \text{new}}(T)(V_x^{x+a}) \| \leq \dlt/2 \},
\end{align*}
which implies \eqref{eq35} by the choice of $T$ and $a$.
\end{proof}

\section{Proof of fluid model properties} \label{sec:proofs_fluid_model}
Here we prove the results of Section~\ref{sec:fluid_model}.
\subsection{Proof of Theorem~\ref{th:unique_fms}}
In the proof of Theorem~\ref{th:unique_fms}, we exploit boundedness of NFMS's away from zero and in the norm (see Lemma~\ref{lem:fms_bounded}), and Lipschitz continuity of MVFMS's in the first coordinate (see Lemma~\ref{lem:fms_LC}). We also use the auxiliary Lemma \ref{lem1}. 

Recall the notations $\sgm_i = \eta_i \ex D_i$ and $\sgm = (\sgm_1, \ldots, \sgm_I)$.

\begin{lemma} \label{lem:fms_bounded}
Let $z(\cdot)$ be an NFMS Then $\sup_{t \geq 0} \| z(t) \| \leq \| z(0) \| + \| \sgm \| < \infty$ and, for any $\dlt > 0$, $\inf_{t \geq \dlt}\min_{1 \leq i \leq I} z_i(t) > 0$. In particular, if $z_i(0)> 0$, then $\inf_{t \geq 0} z_i(t) > 0$.
\end{lemma}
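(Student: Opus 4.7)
Both bounds will follow directly from the NFMS equation~\eqref{eq:nfms} together with the a priori rate constraint $\lmb_i(\cdot)\le m_i$ and the positivity a.s.\ of $B_i$ and $D_i$.

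For the upper bound, I will estimate the two terms of~\eqref{eq:nfms} separately. Dropping the constraint involving $S_i(z,0,t)$ bounds the first term by $z_i^0$. For the second term, dropping the constraint on $B_i$ gives
\[
\eta_i\int_0^t \pr\{B_i\ge S_i(z,s,t),\,D_i\ge t-s\}\,ds \;\le\; \eta_i\int_0^t \pr\{D_i\ge t-s\}\,ds \;\le\; \eta_i\,\ex D_i \;=\;\sgm_i,
\]
so $z_i(t)\le z_i(0)+\sgm_i\le \|z(0)\|+\|\sgm\|$ uniformly in $t\ge 0$, giving the desired norm bound (which is finite because $\sgm_i=\eta_i/\nu_i<\infty$ by the assumptions on $\theta$).

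For the lower bound on $[\dlt,\infty)$, I will use the other direction of the constraint. Since $\lmb_i(\cdot)\le m_i$ pointwise, we have $S_i(z,s,t)\le m_i(t-s)$ for all $t\ge s\ge 0$. Dropping the contribution of the initial mass and changing variables $u=t-s$, this gives
\[
z_i(t)\;\ge\;\eta_i\int_0^t \pr\{B_i\ge m_i u,\,D_i\ge u\}\,du\;\ge\;\eta_i\int_0^{\dlt}\pr\{B_i\ge m_i u,\,D_i\ge u\}\,du\qquad\text{for }t\ge\dlt.
\]
The right-hand side is a positive constant independent of $t$ and of the particular NFMS: indeed, $B_i,D_i>0$ a.s.\ (since $(B_i,D_i)$ is $\pos^2$-valued), so $\pr\{B_i\ge m_i u,\,D_i\ge u\}\to 1$ as $u\downarrow 0$ by monotone convergence, and the integrand is strictly positive on some right-neighborhood of $0$. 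Hence $\inf_{t\ge\dlt}z_i(t)>0$ for every $\dlt>0$, uniformly in $i$.

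Finally, for the ``in particular'' clause with $z_i(0)>0$: since $z(\cdot)\in\mathbf{C}_{\nneg\to\nneg^I}$, the coordinate $z_i(\cdot)$ is continuous, so there exists $\dlt>0$ with $z_i(t)\ge z_i(0)/2>0$ for $t\in[0,\dlt]$. Combined with the previous step, this gives $\inf_{t\ge 0}z_i(t)>0$. I do not anticipate any substantive obstacle: the only minor point to verify is that the integrand $\pr\{B_i\ge m_i u,D_i\ge u\}$ is indeed strictly positive for some interval of $u$'s, which is immediate from the strict positivity of $B_i$ and $D_i$.
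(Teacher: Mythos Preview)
Your proof is correct and follows essentially the same approach as the paper: both arguments obtain the upper bound by dropping the size constraint in~\eqref{eq:nfms} to get $z_i(t)\le z_i(0)+\eta_i\int_0^t\pr\{D_i\ge s\}\,ds\le z_i(0)+\sgm_i$, and the lower bound by using $S_i(z,s,t)\le m_i(t-s)$ to get $z_i(t)\ge\eta_i\int_0^t\pr\{B_i/m_i\wedge D_i\ge s\}\,ds$, then exploiting that the integrand tends to $1$ as $s\downarrow 0$. The only minor addition is that you spell out the ``in particular'' clause via continuity of $z_i(\cdot)$ on a short initial interval, which the paper leaves implicit.
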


\begin{proof}
By the rate constraints, $S_i(z,s,t) \leq m_i(t - s)$ for all $s \leq t$, which, when plugged into the fluid model equation~\eqref{eq:nfms}, implies the following lower bound: $z_i(t) \geq \eta_i \int_0^t \pr \{ B_i / m_i \wedge D_i \geq s \} ds$. Since $f_i(s) := \pr \{ B_i / m_i \wedge D_i \geq s \} \uparrow \pr \{ B_i / m_i \wedge D_i > 0 \} = 1$ as $s \downarrow 0$, in a~small enough interval $[0,\eps]$, $f_i(\cdot) \geq 1/2$. Then, for $t \geq \dlt$, $z_i(t) \geq \eta_i \int_0^{\dlt \wedge \eps} f_i(s) ds \geq \eta_i (\dlt \wedge \eps) /2$. The upper bound follows from~\eqref{eq:nfms} directly: $z_i(t) \leq z_i(0) + \eta_i \int_0^t \pr \{ D_i \geq s \} ds \uparrow z_i(0) + \sgm_i$ as $t \uparrow \infty$.
\end{proof}

\begin{lemma} \label{lem1}
For an $\real$-valued r.v. $\xi$ and $x \leq x'$, $\int_\real \pr \{ u+x \leq \xi \leq u+x' \} du \leq x' - x$.
\end{lemma}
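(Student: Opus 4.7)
The plan is a one-line Fubini/Tonelli calculation. Rewrite the probability as an expectation of an indicator, swap the order of integration (which is justified because the integrand is non-negative), and evaluate the inner integral explicitly.

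More precisely, I would first observe that, for any fixed value of $\xi$, the event $\{u + x \leq \xi \leq u + x'\}$ is the same as $\{\xi - x' \leq u \leq \xi - x\}$, so as a function of $u$ its indicator is $\mathbb{I}_{[\xi - x', \xi - x]}(u)$. Consequently
\begin{equation*}
\int_\real \mathbb{I}\{u + x \leq \xi \leq u + x'\}\, du \;=\; (\xi - x) - (\xi - x') \;=\; x' - x.
\end{equation*}
Next, by Tonelli's theorem applied to the non-negative measurable integrand,
\begin{equation*}
\int_\real \pr\{u + x \leq \xi \leq u + x'\}\, du \;=\; \ex\!\int_\real \mathbb{I}\{u + x \leq \xi \leq u + x'\}\, du \;=\; x' - x,
\end{equation*}
which is in fact an equality and hence certainly $\leq x' - x$, proving the lemma.

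There is no real obstacle here; the only care needed is to invoke Tonelli (not Fubini) so that no integrability hypothesis on $\xi$ is required, and to note that the computation holds identically for all $\xi \in \real$, so no separate treatment of sign or finiteness of $\xi$ is needed.
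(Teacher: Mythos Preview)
Your proof is correct, and it actually establishes equality rather than just the inequality. It is, however, not the route taken in the paper. The paper instead works on a finite interval $[s,t]$ and with a slack $\eps>0$: it rewrites
\[
\int_s^t \pr\{u+x \leq \xi < u+x'+\eps\}\,du
= \int_{s+x}^{t+x} \pr\{\xi \geq u\}\,du - \int_{s+x'+\eps}^{t+x'+\eps} \pr\{\xi \geq u\}\,du,
\]
bounds the right-hand side by $\int_{s+x}^{s+x'+\eps}\pr\{\xi\geq u\}\,du \leq x'-x+\eps$, and then sends $\eps\to 0$ (dominated convergence) and $s\to-\infty$, $t\to\infty$. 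Your Tonelli argument is shorter and avoids the two limiting steps; the paper's version, on the other hand, stays entirely within ordinary Riemann-type manipulations of distribution functions and never invokes a product-measure theorem.
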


See the Appendix for the proof.

\begin{lemma} \label{lem:fms_LC}
Under assumption \textup{(ii)} of Theorem~\ref{th:unique_fms}, any MVFMS $\zeta(\cdot)$ at any time $t \geq 0$ has a~Lipschitz continuous first projection, i.e. there exists a~constant $L(\zeta,t) \in \pos$ such that  for all $i$, $x < x'$ and $y$,
\[
\zeta_i(t) ([x,x'] \times [y,\infty)) \leq L(\zeta,t) (x' - x).
\]
\end{lemma}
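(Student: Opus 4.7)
The plan is to bound each of the two summands in the MVFMS equation
\begin{align*}
\zeta_i(t)([x,x']\times[y,\infty)) =\ & \zeta_i^0\bigl([x+S_i(z,0,t),\,x'+S_i(z,0,t)]\times[y+t,\infty)\bigr) \\
& + \eta_i \int_0^t \pr\bigl\{B_i - S_i(z,s,t) \in [x,x'],\ D_i \geq y+t-s\bigr\}\,ds
\end{align*}
separately by a constant multiple of $x'-x$; here the initial-condition summand has been rewritten using $\zeta_i^0 = z_i^0 \cdot \mathrm{Law}(B_i^0, D_i^0)$.

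The first summand is at most $L(x'-x)$ immediately by the Lipschitz hypothesis on the first projection of $\zeta^0$ in assumption~(ii) of Theorem~\ref{th:unique_fms}.

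For the integral summand, I would first drop the $D_i$-constraint to obtain an upper bound. The essential observation is that under assumption~(ii), the service rate $\lmb_i(z(\cdot))$ is uniformly bounded below by a strictly positive constant $c_i = c_i(\zeta,t)$ on $[0,t]$. Granting this, substituting $u = S_i(z,s,t)$ (for which $du = -\lmb_i(z(s))\,ds$) and then invoking Lemma~\ref{lem1} yields
\[
\int_0^t \pr\{B_i - S_i(z,s,t) \in [x,x']\}\,ds \leq \frac{1}{c_i} \int_\real \pr\{u+x \leq B_i \leq u+x'\}\,du \leq \frac{x'-x}{c_i}.
\]
Combining the two bounds gives the conclusion with $L(\zeta,t) := L + \max_{1 \leq i \leq I} \eta_i/c_i(\zeta,t)$.

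The main technical point to check is the positivity of each $c_i(\zeta,t)$. By Lemma~\ref{lem:fms_bounded} together with $z^0 \in \pos^I$, the trajectory $\{z(s) : s \in [0,t]\}$ lies in a compact subset of $\pos^I$ with every coordinate uniformly bounded away from $0$. On such a set, $\lmb_i = \Lmb_i/z_i$ is continuous (by Lemma~\ref{lem:Lmb_continuous} and the uniform positive lower bound on $z_i$) and strictly positive by non-idling, hence attains a positive minimum. This is exactly the place where the full-dimensionality condition $z^0 \in \pos^I$ in hypothesis~(ii) is essential: without it, some coordinate of $z$ could start at $0$, and then $\lmb_i(z(\cdot))$ could fail to be bounded below on $[0,t]$, breaking the change-of-variables step.
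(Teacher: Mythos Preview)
Your proof is correct and follows essentially the same route as the paper: both split via the fluid model equation, bound the initial-condition term by the Lipschitz hypothesis on $\zeta^0$, and handle the integral term by the change of variables $u = S_i(z,s,t)$ together with Lemma~\ref{lem1}, using Lemma~\ref{lem:fms_bounded} (and $z^0 \in \pos^I$) to ensure $\lmb_i(z(\cdot))$ is bounded away from zero on $[0,t]$. Your justification of the lower bound on $\lmb_i$ via compactness and Lemma~\ref{lem:Lmb_continuous} is slightly more explicit than the paper's, and your resulting constant $L + \max_i \eta_i/c_i$ is marginally sharper than the paper's $L + \|\eta\| M(\zeta,t)$, but the arguments are otherwise identical.
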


\begin{proof}
For an FMS $(\zeta, z)(\cdot)$, for all $i$, $t \geq 0$, $x < x'$ and $y$, 
\[
\zeta_i(t)([x,x'] \times [y,\infty)) \leq f_i(x,x',y) + \eta_i g_i(x,x',y),
\]
where
\begin{align} \label{eq1}
f_i(x,x',y) &:= \zeta_i^0 ( [ x + S_i(z,0,t), x' + S_i(z,0,t) ] \times [y + t, \infty) ), \nonumber \\
g_i(x,x',y) &:= \int_0^t \pr \{ x + S_i(z,s,t) \leq B_i \leq x' + S_i(z,s,t) \} ds.
\end{align}
By Lipschitz continuity of the initial condition, $f_i(x,x',y) \leq L (x' - x)$. In \eqref{eq1}, change the variable of integration for $v = V(s) := S_i(z,s,t)$. Then
\[
g_i(x,x',y) = \int_0^{S_i(z,0,t)} \pr \{ x + v \leq B_i \leq x' + v \} / \lmb_i(z(V^{-1}(v))) dv \leq M(\zeta,t) (x' - x),
\]
where $M(\zeta,t) := \sup_{s \in [0,t]} \max_{1 \leq i \leq I} 1 / \lmb_i(z(s))$. By Lemma~\ref{lem:fms_bounded}, the functions $1 / \lmb_i(z(\cdot))$ are continuous in $[0,t]$. Hence $M(\zeta,t)$ is finite and the first projection of $\zeta(t)$ is Lipschitz continuous with the constant $L(\zeta,t): = L + \| \eta \| M(\zeta,t)$.
\end{proof}

Now we are in a position to prove Theorem~\ref{th:unique_fms}.

{\it Proof of Theorem~\ref{th:unique_fms}.} Let $(\zeta^1, z^1)(\cdot)$ and $(\zeta^2, z^2)(\cdot)$ be two FMS's for the data $(\eta, \theta, \zeta^0)$. 

(i) We show that the two FMS's coincide in an interval $[0,\dlt]$. We check that $z^1(\dlt) = z^2(\dlt) \in \pos^I$ and that the first projection of $\zeta^1(\dlt) = \zeta^2(\dlt)$ is Lipschitz continuous. Then, by Remark~\ref{rem:shifted_fms} and the second part of the theorem, the two FMS's coincide everywhere.

Note that, for a vector $z \in \pos^I$ of a small enough norm, $\lmb_i(z) = m_i$ for all~$i$. Lemma~\ref{lem:fms_bounded} and the fluid model equation~\eqref{eq:nfms} imply that $0 < z_i^1(t), z_i^2(t) \leq \eta_i t$ for all $i$ and $t > 0$. Then, for all~$i$ and $s, t \in [0,\dlt]$, where $\dlt$ is small enough,
\begin{equation} \label{eq2}
S_i(z^1,s,t) = S_i(z^2,s,t) = m_i(t - s).
\end{equation}
Plugging~\eqref{eq2} into~\eqref{eq:nfms}, we obtain, for $t \in [0,\dlt]$ and all $i$, 
\[
z_i^1(t) = z_i^2(t) = \eta_i \int_0^t \pr \{ B_i / m_i \wedge D_i \geq s\} ds.
\]
By Remark \ref{rem1}, $\zeta^1(\cdot)$ and $\zeta^2(\cdot)$ coincide in $[0,\dlt]$, too. Lipschitz continuity of the first projection of $\zeta^1(\dlt) = \zeta^2(\dlt)$ follows as we plug~\eqref{eq2} into the fluid model equation~\eqref{eq:mvfms} (recall that it is valid for all Borel sets): for all $i$, $x < x'$ and~$y$,
\begin{align*}
\zeta_i^j(\dlt)([x,x'] \times [y,\infty)) &= \eta_i \int_0^\dlt \pr \{ x+m_i s \leq B_i \leq x'+ m_i s, D_i \geq y + s \} ds  \\
&\leq \eta_i \int_0^\dlt \pr \{ x/m_i + s \leq B_i / m_i \leq x'/m_i + s \} ds \\
&\leq \eta_i (x' - x) / m_i, \quad j = 1, 2,
\end{align*}
where the last inequality holds by Lemma~\ref{lem1}.

(ii) Suppose that the two FMS's are different, that is $t_\ast: = \inf \{ t > 0 \colon z^1(t) \neq z^2(t) \} < \infty$.

Without loss of generality we may assume that $t_\ast = 0$. Indeed, otherwise we can consider the time-shifted FMS's $(\zeta^j, z^j)(t_\ast + \cdot)$, $j = 1, 2$. By Lemmas~\ref{lem:fms_bounded} and~\ref{lem:fms_LC}, they start from $z^1(t_\ast) = z^2(t_\ast)  \in  \pos^I$ and $\zeta^1(t_\ast) = \zeta^2(t_\ast)$ with a Lipschitz continuous first projection.

By Lemma~\ref{lem:fms_bounded}, the two NFMS never leave a~compact set $[\dlt,\Dlt]^I \subset \pos^I$. Since the rate functions $\lmb_i(z)$ are Lipschitz continuous in such sets, there exists a~constant $K \in \pos$ such that, for all~$i$ and $s \leq t$,
\[
|S_i(z^1,s,t) - S_i(z^2,s,t)| \leq K t \sup_{s \in [0,t]} \| z^1(s) - z^2(s) \| =: K t \eps(t). 
\]
Then, by Lipschitz continuity of the initial condition, we have, for all $i$ and $t \geq 0$,
\begin{align*}
| z_i^1(t) - z_i^2(t) | \leq& \ L K t \eps(t) + \eta_i \int_0^t \pr \{ S_i(z^1,s,t) - K t \eps(t) \leq B_i \leq S_i(z^1,s,t) + K t \eps(t) \} ds.
\end{align*}
In the last equation, change the variable of integration for $v = S_i(z^1,s,t)$ ({\it cf.} the proof of Lemma~\ref{lem:fms_LC}) and put $M = \sup_{z \in [\dlt,\Dlt]^I} \max_{1 \leq i \leq I} 1 / \lmb_i(z)$. Then
\begin{align*}
| z_i^1(t) - z_i^2(t) | \leq L K t \eps(t) + \eta_i M 2 K t \eps(t) \text{ for all $i$} \quad \text{and} \quad \eps(t) \leq (L + 2 \| \eta \| M) K t \eps(t),\end{align*}
which implies that $\eps(t) = 0$ for small enough $t$, and we arrive at a contradiction with $t_\ast = 0$. \qed

\subsection{Proof of Theorem~\ref{th:unique_fixed_point}} \label{sec:proof_fixed_point}
Before proceeding with the proof of the theorem, we discuss some properties of the functions $g_i(\cdot)$ in the auxiliary Lemmas~\ref{lem:g_increasing} and~\ref{lem:alpha}. Recall that these functions are given by
\[
g_i(x) = \eta_i \ex (B_i \wedge x D_i), \quad x \geq 0.
\]
\begin{lemma} \label{lem:g_increasing}
The function $g_i(\cdot)$ is continuous. Also $g_i(\cdot)$ is strictly increasing in $[0, \alpha_i]$ and constant in $[\alpha_i, \infty)$, where 
\[
\alpha_i := \inf \{ x  \colon g_i(x) = \rho_i \} > 0,
\]
and infimum over the empty set is defined to be $\infty$.
\end{lemma}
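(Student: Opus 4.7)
The plan is to establish the three claims (continuity, strict monotonicity on $[0,\alpha_i]$, constancy on $[\alpha_i,\infty)$, and $\alpha_i>0$) by analyzing the pointwise behavior of $B_i\wedge xD_i$ as $x$ varies, using the integrability of $B_i$ as the key domination.

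First I would handle continuity via dominated convergence: for $x_n\to x$ in $\nneg$, the random variables $B_i\wedge x_n D_i$ converge a.s.\ to $B_i\wedge x D_i$ and are dominated by the integrable r.v.\ $B_i$ (since $\ex B_i = 1/\mu_i<\infty$). As a byproduct, monotone convergence gives $g_i(x)\uparrow \eta_i\ex B_i=\rho_i$ as $x\to\infty$, and $g_i(0)=0<\rho_i$, which immediately yields both $\alpha_i>0$ (by continuity there is a neighborhood of $0$ where $g_i<\rho_i$) and the fact that $g_i$ is bounded above by $\rho_i$ on all of $\nneg$.

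For the strict monotonicity on $[0,\alpha_i]$ and constancy on $[\alpha_i,\infty)$, the key observation is the identity
\[
g_i(x')-g_i(x)=\eta_i\,\ex\bigl[(B_i\wedge x'D_i)-(B_i\wedge xD_i)\bigr]
=\eta_i\,\ex\bigl[(x'D_i-xD_i)\wedge(B_i-xD_i)^+\bigr]\quad\text{for $x<x'$},
\]
whose integrand is strictly positive precisely on the event $\{B_i>xD_i\}$. Therefore $g_i(x')>g_i(x)$ iff $\pr\{B_i>xD_i\}>0$, and $g_i$ is flat on an interval starting at $x$ iff $B_i\leq xD_i$ a.s., which is equivalent to $g_i(x)=\rho_i$. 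This dichotomy makes $\alpha_i$ the natural break point: for $x<\alpha_i$ we have $g_i(x)<\rho_i$, so $B_i\leq xD_i$ fails a.s.\ and $g_i$ is strictly increasing through $x$; while for $x\geq \alpha_i$, continuity (and the definition of $\alpha_i$ as an infimum) forces $g_i(\alpha_i)=\rho_i$, after which monotonicity and the upper bound $\rho_i$ pin $g_i$ at $\rho_i$.

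I expect no serious obstacle here; the main subtlety is being careful that $\alpha_i$ as defined is actually attained when finite, which requires invoking continuity of $g_i$ together with the definition via infimum. The only other point to watch is the convention $1/0=\infty$ for the case $\inf(D_i/B_i)=0$, in which $\alpha_i=\infty$ and the second interval is empty, but the argument above still correctly asserts strict monotonicity throughout $[0,\infty)$.
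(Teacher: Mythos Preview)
Your proposal is correct and follows essentially the same approach as the paper: both compute $g_i(x')-g_i(x)$ for $x<x'$ and show it vanishes iff $B_i\le xD_i$ a.s., i.e.\ iff $g_i(x)=\rho_i$, which yields strict increase below $\alpha_i$ and constancy above. Your identity $(x'D_i-xD_i)\wedge(B_i-xD_i)^+$ is just a compact repackaging of the paper's three-indicator decomposition $\ex(B_i-xD_i)\ind_{\{xD_i<B_i\le yD_i\}}+(y-x)\ex D_i\ind_{\{B_i>yD_i\}}$, and the rest (continuity via dominated convergence, $\alpha_i>0$ from $g_i(0)=0$, attainment of the infimum by closedness) matches.
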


\begin{proof}
Continuity of $g_i(\cdot)$ follows by the dominated convergence theorem. 

The situation $\alpha_i = 0$ is not possible since in that case $g_i(x) = \rho_i $ for all $x > 0$ by the definition of $\alpha_i$. But $g_i(\cdot)$ is continuous and $g_i(x) \to g_i(0) = 0$ as $x \to 0$.

If $\alpha_i < \infty$, then, again by the definition of $\alpha_i$ and continuity of $g_i(\cdot)$, we have $g_i(x) = \rho_i$ for all $x \geq \alpha_i$ and $g_i(x) < \rho_i = g_i(\alpha_i)$ for all $x < \alpha_i$.

It is left to check that $g_i(\cdot)$ is strictly increasing in $[0,\alpha_i)$. Assume that $0 \leq x < y < \alpha_i$, but $g_i(x) = g_i(y)$. Then
\begin{align*}
0 = g_i(y)/\eta_i - g_i(x)/\eta_i =& \ex B_i \ind_{\{ B_i \leq x D_i \}} + \ex B_i \ind_{\{ x D_i < B_i \leq y D_i \}} + \ex y D_i \ind_{\{ B_i > y D_i \}} \\
&- \ex B_i \ind_{\{ B_i \leq x D_i \}} - \ex x D_i \ind_{\{ x D_i < B_i \leq y D_i \}} - \ex x D_i \ind_{\{ B_i > y D_i \}}) \\
=& \ex \underbrace{(B_i - x D_i) \ind_{\{ x D_i < B_i \leq y D_i \}}}_{\displaystyle{=:X}} + (y - x) \ex \underbrace{D_i \ind_{\{ B_i > y D_i \}}}_{\displaystyle{=:Y}}, 
\end{align*}
where the r.v.'s $X$ and $Y$ are non-negative, so they must a.s. equal zero. In particular, we have $B_i \leq y D_i$ and $g_i(y)  = \rho_i$, which contradicts the definition of $\alpha_i$ since $y < \alpha_i$. 
\end{proof}

The stabilization points $\alpha_i$ of the functions $g_i(\cdot)$ are related with the r.v.'s $(B_i,D_i)$ in the following way.

\begin{lemma} \label{lem:alpha}
If $\alpha_i < \infty$, then $\inf D_i / B_i = 1 / \alpha_i$. If $\alpha_i = \infty$, then $\inf D_i / B_i = 0$. 
\end{lemma}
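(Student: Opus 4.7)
The plan is to unpack the condition $g_i(x) = \rho_i$ directly and match it with the definition of $\inf(D_i/B_i)$. Since $\rho_i = \eta_i \ex B_i$ and $B_i \wedge xD_i \leq B_i$ a.s., the equality $g_i(x) = \rho_i$ is equivalent to $\ex(B_i - B_i \wedge xD_i) = 0$, which, because the integrand is non-negative, is in turn equivalent to $B_i \leq xD_i$ a.s., i.e.\ $D_i/B_i \geq 1/x$ a.s.

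Next, I would translate the a.s.\ inequality into a statement about the support of $D_i/B_i$. By the definition of the support (smallest closed interval carrying full mass), the condition $\pr\{D_i/B_i \geq 1/x\} = 1$ is equivalent to $\inf(D_i/B_i) \geq 1/x$. Therefore
\[
\{x \geq 0 \colon g_i(x) = \rho_i\} = \{x \geq 0 \colon x \geq 1/\inf(D_i/B_i)\},
\]
with the convention $1/0 := \infty$.

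From this, both cases follow immediately. If $\inf(D_i/B_i) > 0$, the right-hand set is the closed half-line $[1/\inf(D_i/B_i), \infty)$, so $\alpha_i$, its infimum, equals $1/\inf(D_i/B_i) < \infty$, which gives $\inf(D_i/B_i) = 1/\alpha_i$. If $\inf(D_i/B_i) = 0$, then the right-hand set is empty (no finite $x$ satisfies $x \geq \infty$), so $\alpha_i = \inf \varnothing = \infty$ by convention. Taking the contrapositives yields the two equivalences claimed in the lemma.

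The only delicate point is the equivalence $\pr\{D_i/B_i \geq 1/x\} = 1 \Leftrightarrow \inf(D_i/B_i) \geq 1/x$; this is immediate from the definition of the support as the smallest closed set of full measure, so I do not anticipate any genuine obstacle. All other steps are routine manipulations of $\ex(B_i \wedge xD_i)$.
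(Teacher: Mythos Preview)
Your proof is correct and follows essentially the same approach as the paper: both arguments reduce $g_i(x)=\rho_i$ to the almost-sure inequality $D_i/B_i\ge 1/x$ via the nonnegativity of $B_i-(B_i\wedge xD_i)$, and then identify $\alpha_i$ with $1/\inf(D_i/B_i)$ through the definition of the support. Your packaging via the single set identity $\{x:g_i(x)=\rho_i\}=\{x:x\ge 1/\inf(D_i/B_i)\}$ is marginally tidier than the paper's case split (which treats $\alpha_i=\infty$ by contradiction), but the substance is the same.
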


\begin{proof}
First assume $\alpha_i < \infty$. Rewrite the relation $g_i(x) = \rho_i$ as ${\ex B_i (1 - (1 \wedge x D_i / B_i)) = 0}$, which, for $x > 0$, is equivalent to $D_i / B_i \geq 1/x$ a.s. Hence $\alpha_i = \inf \{ x > 0 \colon D_i / B_i \geq 1/x \text{ a.s.}\}$ and $1/\alpha_i = \sup \{ y > 0 \colon D_i / B_i \geq y \text{ a.s.} \}$. In the right-hand side of the latter equation we see the definition of $ \inf D_i / B_i$.

Now consider the case $\alpha_i = \infty$. Assume that $\inf D_i / B_i = y>0$, then $D_i / y \geq B_i$ a.s. and $g_i(1/y) = \rho_i$. On the other hand, since $\alpha_i = \infty$, there is no $x > 0$ such that $g_i(x) = \rho_i$. Hence $y = 0$.
\end{proof}

Having established the above properties of the $g_i(\cdot)$'s, we now can prove Theorem~\ref{th:unique_fixed_point} by adapting a technique developed by Borst {\it et al.} \cite[Lemma 5.2]{BEZ09}.

{\it Proof of Theorem~\ref{th:unique_fixed_point}.}
We first  show uniqueness. Let $z^\ast \in \pos^I$ be an invariant NFMS. Recall that $\lmb(z^\ast)$ is the unique optimal solution for the concave optimization problem~\eqref{eq:def_lmb}. The necessary and sufficient conditions for that are given by the Karush-Kuhn-Tucker (KKT) theorem (see e.g.~\cite[Theorem~3.1]{Balder}): there exist $p \in \nneg^J$ and $\tilde{q} \in \nneg^I$ such that
\begin{gather} 
z_i^\ast \, \mathcal{U}'_i(\lmb_i(z^\ast)) = z_i^\ast \sum_{j =1}^J A_{j i} p_j + \tilde{q}_i \quad \text{for all $i$}, \nonumber \\
p_j ( \sum_{i =1}^I A_{j i} \lambda_i(z^\ast) z_i^\ast  - C_j ) = 0 \quad \text{for all $j$},  \label{eq:very_old_kkt}\\
\tilde{q}_i (\lambda_i(z^\ast) - m_i) = 0 \quad \text{for all $i$}. \nonumber
\end{gather}
or equivalently, there exist $p \in \nneg^J$ and $q \in \nneg^I$ ($q_i = \tilde{q}_i/z_i^\ast$) such that
\begin{subequations} \label{eq:old_kkt}
\begin{gather}
\mathcal{U}'_i(\lmb_i(z^\ast)) = \sum_{j =1}^J A_{j i} p_j + q_i \quad \text{for all $i$}, \label{eq:kkt_1} \\
p_j ( \sum_{i =1}^I A_{j i} \Lambda_i(z^\ast) - C_j ) = 0 \quad \text{for all $j$}, \label{eq:kkt_2} \\
q_i (\lambda_i(z^\ast) - m_i) = 0 \quad \text{for all $i$}. \label{eq:kkt_3}
\end{gather}
\end{subequations}
By the assumptions of the theorem and Lemmas~\ref{lem:g_increasing} and~\ref{lem:alpha}, the functions $g_i(\cdot)$ are strictly increasing in the intervals $[0,m_i]$, which implies two things (see also Fig.~\ref{fig:g_i}). First, the fixed point equation~\eqref{eq:fixed_point_equiv} can be rewritten as $\lmb_i(z^\ast) = g_i^{-1}(\Lmb_i(z^\ast))$ for all $i$, and we plug that into~\eqref{eq:kkt_1}. Second, the second multiplier in~\eqref{eq:kkt_3} is zero if and only if $g_i(\lmb_i(z^\ast)) = g_i(m_i)$, and that, by~\eqref{eq:fixed_point_equiv}, is equivalent to $\Lmb_i(z^\ast) = g_i(m_i)$. Hence, $\Lmb(z^\ast)$ satisfies
\begin{subequations} \label{eq:new_kkt}
\begin{gather} 
\mathcal{U}'_i(g_i^{-1}(\Lmb_i(z^\ast)) = \sum_{j=1}^J A_{j i} p_j + q_i \quad \text{for all $i$}, \label{eq:new_kkt_1} \\
p_j ( \sum_{i =1}^I A_{j i} \Lmb_i(z^\ast) - C_j ) = 0 \quad \text{for all $j$}, \label{eq:new_kkt_2} \\
q_i (\Lmb_i(z^\ast) - g_i(m_i)) = 0 \quad \text{for all $i$} \label{eq:new_kkt_3}.
\end{gather}
\end{subequations}
Now note that the last three equations form the KKT conditions for another optimization problem. Indeed, take functions $\tilde{g}_i(\cdot)$ that are continuous and strictly increasing in $\nneg$ and coincide with $g_i(\cdot)$ in $[0,m_i]$ (and hence, the inverse functions $\tilde{g}_i^{-1}(\cdot)$ and $g_i^{-1}(\cdot)$ coincide in $[0,g_i(m_i)]$). Also take functions $G_i(\cdot)$ such that $G_i'(\cdot) = \mathcal{U}'_i (\tilde{g}_i^{-1}(\cdot))$ in $\pos$. Then~\eqref{eq:new_kkt} gives necessary and sufficient conditions for $\Lmb(z^\ast)$ to solve~\eqref{eq:th_unique_fixed_point}.
Since the functions $\mathcal{U}_i(\cdot)$ are strictly concave, their derivatives $\mathcal{U}'_i(\cdot)$ are strictly decreasing. Then $G'_i(\cdot)$ are strictly decreasing and, equivalently, $G_i(\cdot)$ are strictly concave, which implies that $\Lmb(z^\ast) = \Lmb^\ast$ is actually the unique solution to~\eqref{eq:th_unique_fixed_point} and does not depend on $z^\ast$. Then we invert the $g_i(\cdot)$'s in the fixed point equation~\eqref{eq:fixed_point_equiv}, which implies that the fixed point $z_i^\ast = \Lmb_i^\ast / g_i^{-1}(\Lmb_i^\ast)$ is unique because $\Lmb^\ast$ is unique.

The existence result follows similarly. There exists a unique optimal solution $\Lmb^\ast$ to~\eqref{eq:th_unique_fixed_point} and it satisfies the KKT conditions~\eqref{eq:new_kkt}. Put $\lmb_i^\ast = g_i^{-1}(\Lmb_i^\ast)$ for all~$i$. Then $\lmb^\ast$ and $\Lmb^\ast$ satisfy the KKT conditions~\eqref{eq:old_kkt} and~\eqref{eq:very_old_kkt} , i.e., for the vector $z^\ast$ with $z_i^\ast := \Lmb_i^\ast / \lmb_i^\ast$, we have $\lmb^\ast = \lmb(z^\ast)$ and $\Lmb^\ast = \Lmb(z^\ast)$. Plugging the last two relations into the definition of $\lmb^\ast$, we get the fixed point equation. \qed

\begin{figure}[!htb] 
\centering
\includegraphics[scale=0.65]{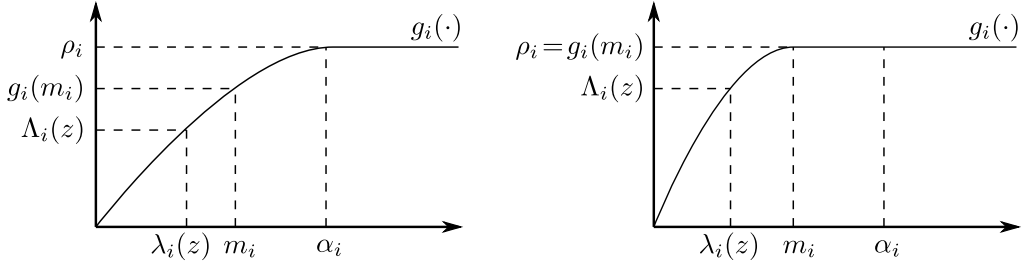}
\caption{Graph of the function $g_i(\cdot)$ in the two possible cases: \\ when $m_i \leq \alpha_i$ (left) and when $m_i > \alpha_i$ (right); $z$ is a invariant NFMS. }
\label{fig:g_i}
\end{figure}

\subsection{Proof of Theorem~\ref{th:single_link}}
The fixed point equation~\eqref{eq:fixed_point_equiv} and the monotonicity of the functions $g_i(\cdot)$ imply that the bandwidth class~$i$ gets in an equilibrium is at most $g_i(m_i)$. Therefore, we refer to the scenarios $\sum_{i=1}^I g_i(m_i) < C$, $\sum_{i=1}^I g_i(m_i) = C$ and $\sum_{i=1}^I g_i(m_i) > C$ as underloaded, critically loaded and overloaded, respectively. Below we calculate the invariant NFMS's in the three cases.

Summing up~\eqref{eq:fixed_point}, the KKT conditions~\eqref{eq:old_kkt} for~\eqref{eq:def_lmb} and the capacity and rate constraints, a $z \in \pos^I$ is an invariant NFMS if and only if there exist $p \in \nneg$ and $q \in \nneg^I$ such that (we omit the argument of the rates $\lmb_i(z)$ and bandwidth allocations $\Lmb_i(z)$)
\begin{subequations} \label{eq:single_link}
\begin{gather}
\Lmb_i = g_i(\lmb_i) \label{eq:single_link_a} \quad \text{for all~$i$}, \\
\kappa_i / \lmb_i = p + q_i \label{eq:single_link_b} \quad \text{for all~$i$}, \\
p (\sum_{i=1}^I \Lmb_i - C) = 0 \label{eq:single_link_c} \\
q_i(\lmb_i - m_i) = 0 \label{eq:single_link_d} \quad \text{for all~$i$}, \\
\sum_{i=1}^I \Lmb_i \leq C \label{eq:single_link_e}, \\
\lmb_i \leq m_i \label{eq:single_link_f}  \quad \text{for all~$i$}.
\end{gather}
\end{subequations}

\paragraph{Underload} In this case, there is no interaction between the classes, they do not compete but all get the maximum rate allowed. Indeed,~\eqref{eq:single_link_c} and~\eqref{eq:single_link_b} imply that $p = 0$ and all $q_i > 0$. Then, by~\eqref{eq:single_link_d} and~\eqref{eq:single_link_a}, all $\lmb_i = m_i$ and all $\Lmb_i = g_i(m_i)$. Hence, there is a~unique invariant NFMS given by 
\[
z_i = g_i(m_i)/m_i \quad \text{for all~$i$}.
\]

\paragraph{Critical load} First note that 
\begin{equation} \label{eq6}
\Lmb_i = g_i(m_i) \quad \text{for all $i$}.
\end{equation}
Indeed, there are two possibilities: either $p = 0$ $\stackrel{\eqref{eq:single_link_b}}{\Rightarrow}$ all $q_i > 0$ $\stackrel{\eqref{eq:single_link_d}}{\Rightarrow}$ all $\lmb_i = m_i$ $\stackrel{\eqref{eq:single_link_a}}{\Rightarrow}$ \eqref{eq6}, or $p>0$ $\stackrel{\eqref{eq:single_link_c}}{\Rightarrow}$ $\sum_{i=1}^I \Lmb_i = C$ $\Rightarrow$ \eqref{eq6}, where the last implication is due to $\Lmb_i \leq g_i(m_i)$ and $\sum_{i = 1}^I g_i(m_i) = C$.

Recall from Lemma~\ref{lem:alpha} that
\[
\alpha_i := \inf \{ x \colon g_i(x) = \rho_i \} = 1 / \inf(D_i/B_i).
\]
By~\eqref{eq6}, the relations \eqref{eq:single_link_a} and \eqref{eq:single_link_f} are equivalent to $m_i \wedge \alpha_i \leq \lmb_i \leq m_i$ ( see Fig.~\ref{fig:g_i}). Hence, \eqref{eq:single_link}~reduces to
\begin{subequations} \label{eq:critical_load}
\begin{gather}
\kappa_i / \lmb_i = p + q_i \label{eq:critical_load_a}, \\
q_i(\lmb_i - m_i) = 0 \label{eq:critical_load_b},\\
m_i \wedge \alpha_i \leq \lmb_i \leq m_i \label{eq:critical_load_c}.
\end{gather}
\end{subequations}
Let
\[
\cli_{\text{crit}}: = \{ i \colon m_i \leq \alpha_i \}.
\]
For $i \in \cli_{\text{crit}}$, by~\eqref{eq:critical_load_c}, we have $\lmb_i = m_i$ and $z_i = g_i(m_i) / m_i$. Then~\eqref{eq:critical_load_b} is satisfied, and~\eqref{eq:critical_load_a} implies that $p \leq \kappa_i / m_i$.

Now divide $\{1, \ldots, I\} \setminus \cli_{\text{crit}}$ into two subsets $\cli_1 \cap \cli_2 = \varnothing$. For $i \in \cli_1$, put $\lmb_i = m_i$, then (as for $i \in \cli_{\text{crit}}$) $z_i = g_i(m_i) / m_i$, \eqref{eq:critical_load_b} is satisfied, and~\eqref{eq:critical_load_a} implies that $p \leq \kappa_i / m_i$. For $i \in \cli_2$, assume $\alpha_i \leq \lmb_i < m_i$. Then $q_i = 0$ by \eqref{eq:critical_load_b}, $\kappa_i / \lmb_i = p$ by \eqref{eq:critical_load_a}, and $\kappa_i / m_i < p \leq \kappa_i / m_i$. Also $z = g_i(m_i) p / \kappa_i$.

Summing up everything said above, the set of invariant NFMS's is given by
\begin{align*}
S_z := \bigcup_{\cli \supseteq \cli_{\text{crit}}} \{ z \colon &z_i =g_i(m_i)/m_i \text{ for } i \in \cli \ \text{ and } \
z_i = g_i(m_i)p / \kappa_i \text{ for } i \notin \cli, \\
&\text{where } p \in ( \max_{i \notin \cli} \kappa_i / m_i, \min_{i \in \cli} \kappa_i / m_i \wedge \min_{i \notin \cli} \kappa_i / \alpha_i ] \}.
\end{align*}
Equivalent descriptions of $S_z$ are
\begin{align*}
S_z &= \{ z \colon z_i = g_i(m_i) / m_i \text{ if } p \leq \kappa_i / m_i \ \text{ and } \ z_i = g_i(m_i)p / \kappa_i \text{ if } p > \kappa_i / m_i, \quad p \in S_p\},\\
S_p &:= (0, \min_{i \in \cli_{\text{crit}}} \kappa_i / m_i \wedge \min_{i \notin \cli_{\text{crit}}} \kappa_i / \alpha_i ] = (0, \min_{1 \leq i \leq I} \kappa_i / (m_i \wedge \alpha_i) ],
\end{align*}
and
\begin{align*}
S_z &= \{ z \colon z_i = g_i(m_i) / (m_i \wedge \kappa_i x), \quad x \in S_x \},\\
S_x &:= [\max_{1 \leq i \leq I} (m_i \wedge \alpha_i) / \kappa_i, \infty).
\end{align*}
We now apply the last formula in a couple of simple examples.

{\it Example 1.} If $m_i \leq \alpha_i$ for all $i$, then $S_x = [ \max_{1 \leq i \leq I} m_i / \kappa_i, \infty )$, and $\kappa_i x \geq m_i$ for all $x \in S_x$ and all~$i$. Hence, there is a unique invariant NFMS given by $z_i = g_i(m_i) / m_i$ for all $i$, which agrees with Theorem~\eqref{th:unique_fixed_point}.

{\it Example 2.} If $m_1 > \alpha_1$, $m_i \leq \alpha_i$ for $i \neq 1$ and $\alpha_1 / \kappa_1 \geq \max_{i \neq 1} m_i / \kappa_i$, then, for any $\lambda_1 \in [\alpha_1, m_1]$, $z = (g_1(m_1) / \lambda_1, g_2(m_2) / m_2, \ldots, g_I(m_I) / m_I)$ is an invariant NFMS.

\paragraph{Overload} In this situation, by the capacity constraint~\eqref{eq:single_link_e}, at least one class of flows does not receive the maximum service, i.e. at least one $\Lambda_i < g_i(m_i)$. We first find out which classes get the maximum service and which do not, and then calculate the unique invariant NFMS.

{\it Who gets the maximum service.} Since at least one $\Lmb_i < g_i(m_i)$, at least one $\lmb_i < m_i \wedge \alpha_i$ (see Fig.~\ref{fig:g_i}). Then \eqref{eq:single_link_d}, \eqref{eq:single_link_b} and~\eqref{eq:single_link_c} imply that at least one $q_i  = 0$, $p > 0$ and $\sum_{i = 1}^I \Lmb_i = C$. At~this~point, we can equivalently rewrite~\eqref{eq:single_link} as follows: there exist $x > 0$ and $\eps \in \nneg^I$ such that (the functions $\tilde{g}_i(\cdot)$ are introduced below)
\begin{subequations} \label{eq:overload}
\begin{gather}
\Lmb_i = g_i(\lmb_i) \Leftrightarrow \Lmb_i = \tilde{g}_i(\lmb_i), \label{eq:overload_a}\\
\sum_{i = 1}^I g_i(\lmb_i)  = C \Leftrightarrow \sum_{i = 1}^I \tilde{g}_i(\lmb_i)  = C \label{eq:overload_b}\\
\lmb_i = \kappa_i( x - \eps_i) \label{eq:overload_c}, \\
\eps_i(\lmb_i - m_i) = 0 \label{eq:overload_d}, \\
\lmb_i \leq m_i \label{eq:overload_e}.
\end{gather}
\end{subequations}

For all $i$ and $x \geq 0$, put
\[
\tilde{g}_i(x) := g_i(m_i \wedge x).
\]
By the rate constraints~\eqref{eq:overload_e},  in \eqref{eq:overload}, we can equivalently replace $g_i(\cdot)$ by $\tilde{g}_i(\cdot)$.

If $\lmb_i < m_i$, then, by \eqref{eq:overload_d} and \eqref{eq:overload_c}, $\eps_i = 0$ and $\lmb_i = \kappa_i x$, and hence
\begin{equation} \label{eq7}
\tilde{g}_i(\lmb_i) = \tilde{g}_i(\kappa_i x).
\end{equation}
If $\lmb_i = m_i$, then, by~\eqref{eq:overload_c}, $\kappa_i x \geq m_i$ and $\tilde{g}_i(\kappa_i x) = g_i(m_i)$, and, again, \eqref{eq7} holds.

Plugging~\eqref{eq7} into~\eqref{eq:overload_b}, we get
\begin{equation} \label{eq8}
\sum_{i = 1}^I \tilde{g}_i(\kappa_i x) = C.
\end{equation}
The function $\tilde{g}(x) := \sum_{i = 1}^I \tilde{g}_i(\kappa_i x)$ is continuous everywhere, strictly increasing in the interval 
\[
0 \leq x \leq \max_{1 \leq i \leq I} (m_i \wedge \alpha_i) / \kappa_i =: x_0
\]
and constant for $x \geq x_0$, and also $\tilde{g}(0) = 0$ and $\tilde{g}(x_0) = \sum_{i=1}^I g_i(m_i) > C$, which implies that there exists a unique $x$ solving~\eqref{eq8} and $x \in (0, x_0)$.

By~\eqref{eq:overload_a} and~\eqref{eq7}, $\Lmb_i = \tilde{g}_i(\kappa_i x)$. Then (see Fig.~\ref{fig:tilde_g_i}) $\Lmb_i = g_i(m_i)$ if $(m_i \wedge \alpha_i) / \kappa_i \leq x$ and $\Lmb_i < g_i(m_i)$ if $(m_i \wedge \alpha_i) / \kappa_i > x$. Hence, the set of classes that get the maximum service is
\begin{equation} \label{eq:max_service}
\cli_{\text{over}} := \{ i \colon (m_i \wedge \alpha_i) / \kappa_i \leq x \}.
\end{equation}

{\it Invariant NFMS.} For $i \notin \cli_{\text{over}}$, $\Lmb_i = g_i(\lmb_i) < g_i(m_i)$, which implies that (see Fig.~\ref{fig:g_i}) $\lmb_i < m_i \wedge \alpha_i$. Then, by \eqref{eq:overload_d} and~\eqref{eq:overload_c}, $\eps_i = 0$ and $\lmb_i = \kappa_i x$ (meeting the rate constraint~\eqref{eq:overload_e}), and $z_i = \Lmb_i / \lmb_i = g_i(\kappa_i x) / (\kappa_i x)$.

For $i \in \cli_{\text{over}}$, consider the two possible cases: $\kappa_i x < m_i$ and $\kappa_i x \geq m_i$. If $\kappa_i x < m_i$,  then, by \eqref{eq:overload_c} and~\eqref{eq:overload_d}, $\lmb_i \leq \kappa_i x < m_i$ and $\eps_i  = 0$, and, again by \eqref{eq:overload_c}, $\lmb_i = \kappa_i x$. If $\kappa_i x \geq m_i$, then $\lmb_i = m_i$ because otherwise we would arrive at a contradiction: $\lmb_i < m_i$ $\stackrel{\eqref{eq:overload_d}}{\Rightarrow}$ $\eps_i = 0$ $\stackrel{\eqref{eq:overload_c}}{\Rightarrow}$ $\lambda_i = \kappa_i x \geq m_i$. Hence, for $i \in \cli_{\text{over}}$, $\lmb_i = m_i \wedge \kappa_i x$ and $z_i = \Lmb_i / \lmb_i = g_i(m_i) / (m_i \wedge \kappa_i x)$.

Summing up, the unique invariant NFMS is given by
\[
z_i = g_i(m_i) / (m_i \wedge \kappa_i x) \text{ for } i \in \cli_{\text{over}} \ \text{ and }\  z_i = g_i(\kappa_i x) / (\kappa_i x) \text{ for }  i \notin \cli_{\text{over}},
\]
where $x$ is the unique solution to~\eqref{eq8} and $\cli_{\text{over}}$ is defined by~\eqref{eq:max_service}.

\begin{figure}[!htb] 
\centering
\includegraphics[scale=0.60]{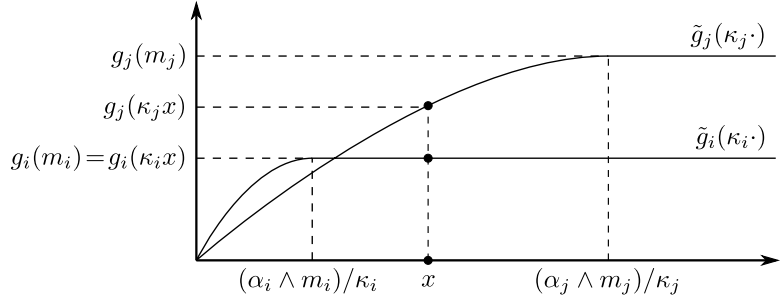}
\caption{Graphs of the functions $\tilde{g}_i(\cdot)$; $x$ is the unique solution to~\eqref{eq8}. }
\label{fig:tilde_g_i}
\end{figure}

\section{Proof of Theorem~\ref{th:fluid_limits}} \label{sec:proof_fluid_limits}
To prove $\mathbf{C}$-tightness (that is, tightness with all weak limits being a.s. continuous) of $\{ \cloz^r(\cdot) \}_{r \in \clr}$, we check standard conditions (see e.g. \cite{EthierKurtz}) of compact containment (Section ~\ref{sec:compact_containment}) and oscillation control (Section~\ref{sec:oscillation_control}). In Section~\ref{sec:fluid_limits_fms}, we check that fluid limits satisfy the fluid model equation~\eqref{eq:mvfms}.

To establish these main steps of the proof, we develop a number of auxiliary results. Section~\ref{sec:load} contains a law of large numbers result for the load process. Section~\ref{sec:asymptotic_regularity} proves that, for large~$r$, $\cloz^r(\cdot)$ puts arbitrarily small mass to thin horizontal and vertical strips, which in particular implies that fluid limits have both projections free of atoms. In Section~\ref{sec:fluid_limits_bounded_from_zero}, fluid limits are shown to be coordinate-wise bounded away from zero outside $t = 0$.

\subsection{Load process} \label{sec:load}
Introduce the measure valued load processes and their scaled versions: for all $r$, $i$ and $t \geq s \geq 0$,
\begin{equation*}
\begin{alignedat}{2}
\cll_i^r(t) &:= \sum_{k=1}^{E_i^r(t)} \dlt_{(B_{ik}^r, D_{ik}^r)},& \quad \quad \quad \clol_i^r(t) &:= \cll_i^r(t) / r,\\
\cll_i^r(s,t) &:= \cll_i^r(t) - \cll_i^r(s),& \quad \quad \quad \clol_i^r(s,t) &:= \clol_i^r(t) - \clol_i^r(s).
\end{alignedat}
\end{equation*}

The following property is useful when proving other results of the section. Only minor adjustments in the proof of~\cite[Theorem~5.1]{GW09} are needed to establish it.
\begin{lemma} \label{lem:load}
By \textup{(A.3) {\it and} (A.4)}, as $r \to \infty$, $(\clol^r(\cdot), \la \chi_1, \clol^r(\cdot) \ra, \la \chi_2, \clol^r(\cdot) \ra) \Rightarrow (\eta(\cdot) \ast \theta, \rho(\cdot), \sgm(\cdot))$, where 
$\chi_1(x_1,x_2) := x_1$, $\chi_2(x_1, x_2) := x_2$, and $\eta(t) := t \eta$, $\rho(t) := t \rho$, $\sgm(t) := t \sgm$.
\end{lemma}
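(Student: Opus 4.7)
The plan is to first handle the measure-valued convergence $\clol^r(\cdot) \Rightarrow \eta(\cdot) \ast \theta$ via a functional law of large numbers for integrated bounded continuous test functions, and then upgrade to the unbounded first-moment functionals $\chi_1,\chi_2$ by a truncation argument using uniform integrability. Throughout, by Skorokhod representation we may assume the convergences in (A.3) and (A.4) hold almost surely, and since the arrival processes have independent coordinates and are independent of $\{(B_{ik}^r, D_{ik}^r)\}$, each route can be treated separately.

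First I would fix a bounded continuous $f \colon \nneg^2 \to \real$ and study the process $\la f, \clol_i^r(t) \ra = \frac{1}{r} \sum_{k=1}^{E_i^r(t)} f(B_{ik}^r, D_{ik}^r)$. A functional strong law of large numbers for the triangular array $\{f(B_{ik}^r,D_{ik}^r)\}_{k \in \nat, r \in \clr}$—available because $f$ is bounded continuous, $(B_{ik}^r,D_{ik}^r) \Rightarrow (B_i,D_i)$, and the $k$-index sequences are i.i.d.\ within each $r$—yields $\sup_{u \le T} \bigl| \tfrac{1}{r} \sum_{k=1}^{\lf r u \rf} f(B_{ik}^r,D_{ik}^r) - u \, \ex f(B_i,D_i) \bigr| \to 0$ in probability for each $T > 0$. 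Composing with the random time change $u = E_i^r(t)/r$, which by (A.3) converges uniformly on compacts to $\eta_i t$, gives $\la f, \clol_i^r(\cdot) \ra \to \eta_i(\cdot) \, \ex f(B_i,D_i) = \la f, (\eta(\cdot) \ast \theta)_i \ra$ uniformly on compacts.

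Next I would promote these pointwise functional convergences to convergence of $\clol^r(\cdot)$ in $\skr_{\nneg \to \msr^I}$. Compact containment holds because the total mass $\clol_i^r(t)(\nneg^2) = \ove_i^r(t)$ is monotone and converges by (A.3), and oscillation control follows from the elementary inequality $d\bigl(\clol_i^r(s), \clol_i^r(t)\bigr) \leq \clol_i^r(s,t)(\nneg^2) = \ove_i^r(t) - \ove_i^r(s)$, whose right-hand side has modulus of continuity vanishing in the fluid scaling by (A.3). Tightness plus convergence of $\la f, \clol_i^r(\cdot)\ra$ for a convergence-determining class of bounded continuous $f$ identifies the limit as the continuous deterministic measure-valued trajectory $\eta(\cdot) \ast \theta$.

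The final and most delicate step is the convergence of the moment processes $\la \chi_j, \clol^r(\cdot) \ra$, $j = 1,2$, since $\chi_1,\chi_2$ are unbounded and not covered by Step 1. Here I would use that (A.4) gives not only $(B_i^r,D_i^r) \Rightarrow (B_i,D_i)$ but also $\ex B_i^r \to \ex B_i$ and $\ex D_i^r \to \ex D_i$, which by the standard Vitali-type criterion yields uniform integrability of $\{B_i^r\}_r$ and $\{D_i^r\}_r$. Truncate $\chi_1 = (\chi_1 \wedge M) + (\chi_1 - M)^+$: the bounded part is handled by Step 1, giving the target limit $t\eta_i \ex(B_i \wedge M) \to t \eta_i / \mu_i = \rho_i t$ as $M \to \infty$, and the tail part is controlled via Doob's maximal inequality applied to the centered random walk increments together with $\ex (B_i^r - M)^+ \to \ex (B_i - M)^+ \to 0$ uniformly, which makes $\sup_{t \le T} \tfrac{1}{r} \sum_{k=1}^{E_i^r(t)} (B_{ik}^r - M)^+$ negligible in probability as $r \to \infty$ and $M \to \infty$. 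The same argument applied to $\chi_2$ gives the convergence to $\sgm(\cdot)$. Finally, because all three limits $(\eta(\cdot) \ast \theta, \rho(\cdot), \sgm(\cdot))$ are deterministic and continuous, marginal convergence automatically implies joint convergence of the triple.

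The main obstacle is precisely the third step: the uniform-integrability-based truncation for the unbounded test functions $\chi_1,\chi_2$. The bounded-test-function LLN and the Skorokhod-space tightness are essentially routine given the assumptions, but controlling the large-size and large-patience tails uniformly in both $r$ and $t \in [0,T]$ requires careful combination of (A.4) with a maximal inequality for the partial-sum processes.
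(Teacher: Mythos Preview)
Your proposal is correct and follows the standard route; the paper itself gives no proof beyond the remark that ``only minor adjustments in the proof of \cite[Theorem~5.1]{GW09} are needed,'' and your outline is essentially that argument (functional LLN for bounded test functions via random time change, tightness in $\skr_{\nneg \to \msr^I}$, then a truncation to reach the unbounded moment functionals).

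One small simplification for Step~3: you invoke Doob's maximal inequality for the centered tail increments, but this is both unnecessary and potentially problematic, since only first moments are assumed finite and Doob in its usual form needs $L^2$. The point is that the tail summands $(B_{ik}^r - M)^+$ are nonnegative, so the partial sums are nondecreasing in $k$ and hence
\[
\sup_{t \le T} \frac{1}{r} \sum_{k=1}^{E_i^r(t)} (B_{ik}^r - M)^+ \;=\; \frac{1}{r} \sum_{k=1}^{E_i^r(T)} (B_{ik}^r - M)^+ .
\]
On the high-probability event $\{E_i^r(T) \le 2r\eta_i T\}$ the expectation of the right-hand side is at most $2\eta_i T \, \ex(B_i^r - M)^+$, which is small uniformly in large $r$ by the uniform integrability you correctly extracted from (A.4). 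No maximal inequality is needed.
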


\subsection{Compact containment} \label{sec:compact_containment}
The property we prove here, together with the oscillation control result that follows in Section~\ref{sec:oscillation_control}, implies tightness of the scaled state descriptors.
\begin{lemma} \label{lem:compact_containment}
By \textup{(A.3)--(A.5)}, for any $T > 0$ and $\eps > 0$, there exists a compact set $K \subset \msr^I$ such that 
\[
\liminf_{r \to \infty} \pr^r \{ \cloz^r(t) \in K \text{ for all } t \in [0,T] \} \geq 1 - \eps.
\]
\end{lemma}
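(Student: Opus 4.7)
The plan is to exploit two pathwise dominations of $\cloz^r(t)$ by the initial configuration $\cloz^r(0)$ and the total load measure $\clol^r(T)$, and then transfer tightness from these two sequences to $\{\cloz^r(t)\}_{t \in [0,T]}$ via the Prokhorov-style characterization of relative compactness in $\msr^I$. Specifically, I would first establish that for all $r$, $i$, $t \in [0,T]$ and $M > 0$,
\begin{align*}
\oz^r_i(t) &\leq \oz^r_i(0) + \ove^r_i(T), \\
\cloz^r_i(t)(\nneg^2 \setminus [0,M]^2) &\leq \cloz^r_i(0)(\nneg^2 \setminus [0,M]^2) + \clol^r_i(T)(\nneg^2 \setminus [0,M]^2).
\end{align*}
The first bound is immediate from $Z^r_i(t) \leq Z^r_i(0) + E^r_i(t)$. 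The second uses the decomposition $\cloz^r_i(t) = \clozinit_i(t) + \cloznew_i(t)$ together with the elementary monotonicity fact that residual sizes and patience times of a flow are non-increasing in time, so any atom of $\cloz^r_i(t)$ lying outside $[0,M]^2$ necessarily arises from a flow whose initial coordinates are also outside $[0,M]^2$; summing over initial flows and over flows that have arrived by time $T$ yields the bound.

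Next, by (A.5) we have $\cloz^r(0) \Rightarrow \zeta^0$ in $\msr^I$, and by Lemma~\ref{lem:load} we have $\clol^r(T) \Rightarrow T \eta \ast \theta$ in $\msr^I$. Since $(\msr^I, d_I)$ is separable and complete, Prokhorov's theorem furnishes compact sets $K_1, K_2 \subset \msr^I$ with $\pr^r\{\cloz^r(0) \in K_1\} \geq 1 - \eps/2$ and $\pr^r\{\clol^r(T) \in K_2\} \geq 1 - \eps/2$ for all sufficiently large $r$. The standard characterization of relative compactness in $\msr$ (finite total mass plus uniform concentration on compact subsets of $\nneg^2$) then yields, for each $K_j$, a uniform mass bound $N_j$ and a tightness-radius function $M_j(\cdot)$ such that $\xi \in K_j$ implies $\la 1, \xi_i \ra \leq N_j$ and $\xi_i(\nneg^2 \setminus [0, M_j(\eps_0)]^2) \leq \eps_0$ for every $i$ and $\eps_0 > 0$.

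Setting $M(\eps_0) := M_1(\eps_0) \vee M_2(\eps_0)$, the target compact set $K$ will be the closure in $\msr^I$ of
\[
\{ \xi \in \msr^I \colon \la 1, \xi_i \ra \leq N_1 + N_2 \text{ and } \xi_i(\nneg^2 \setminus [0, M(\eps_0)]^2) \leq 2 \eps_0 \text{ for all } i \text{ and } \eps_0 > 0 \}.
\]
This set is relatively compact by the same characterization and closed by construction, hence compact. On the event $\{ \cloz^r(0) \in K_1 \} \cap \{ \clol^r(T) \in K_2 \}$, which has probability at least $1 - \eps$ for all large~$r$, the two pathwise bounds force $\cloz^r(t) \in K$ simultaneously for every $t \in [0,T]$. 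The only point requiring real attention is the second pathwise bound, since $\nneg^2 \setminus [0,M]^2$ is not invariant under the atom dynamics --- atoms can enter the square $[0,M]^2$ from outside but never exit it --- so one must argue in the correct direction, namely that an atom lying outside $[0,M]^2$ at time $t$ must have been outside at time $0$ or at its arrival epoch.
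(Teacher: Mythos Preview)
Your proposal is correct and follows essentially the same approach as the paper: both proofs rest on the two pathwise dominations you state, the Prokhorov/Kallenberg characterization of relative compactness in $\msr$, and the tightness of $\cloz^r(0)$ and $\clol^r(T)$ inherited from (A.5) and the load lemma. The only cosmetic differences are that the paper works coordinate-wise (budgeting $\eps/I$ per coordinate) and combines $\cloz_i^r(0)+\clol_i^r(T)$ into a single convergent sequence before extracting one compact set $K_i'$, whereas you work directly in $\msr^I$ and extract two compact sets $K_1,K_2$; neither choice affects the substance of the argument.
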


\begin{proof}
Fix $T$ and $\eps$. It suffices to show that, for each $i$, there exist a~compact set $K_i \subset \msr$ such that 
\begin{equation} \label{eq9}
\liminf_{r \to \infty} \pr^r \{ \cloz_i^r(t) \in K_i \text{ for all } t \in [0,T] \} \geq 1 - \eps / I.
\end{equation}

We use the following criterion (see e.g. \cite[Theorem 15.7.5]{Kallenberg}).
\begin{proposition} \label{pr:rel_comp_criterion}
A set $\clm \subset \msr$ is relatively compact if and only if $\sup_{\xi \in \clm} \xi(\nneg^2) < \infty$ and $\sup_{\xi \in \clm} \xi(\nneg^2 \setminus [0,n]^2) \to 0$ as $n \to \infty$.
\end{proposition}

Note that
\begin{equation} \label{eq10}
\cloz_i^r (t)(\nneg^2)  = \oz_i^r (t) \leq \oz_i^r (0) + \ove_i^r (T) = \cloz_i^r (0)(\nneg^2) + \clol_i^r (T)(\nneg^2).
\end{equation}
Also note that, if the residual size (patience time) of a flow at time~$t$ exceeds~$n$, then its initial size (patience time), must have exceeded~$n$, too, which implies the following bound:
\begin{equation} \label{eq11}
\cloz_i^r (t)(\nneg^2 \setminus [0,n]^2) \leq \cloz_i^r (0)(\nneg^2 \setminus [0,n]^2) + \clol_i^r (T)(\nneg^2 \setminus [0,n]^2).
\end{equation}
The sequence $\{ \cloz_i^r (0) + \clol_i^r (T) \}_{r \in \clr}$ converges and hence in tight, i.e. there exists a compact set $K'_i \subset \msr$ such that
\begin{equation} \label{eq12}
\inf_{r \in \clr} \pr^r \{ \cloz_i^r (0) + \clol_i^r (T) \in K'_i \} \geq 1 - \eps/I.
\end{equation}
Put 
\begin{align*}
K_i'' := \{ \xi \in \msr \colon \text{for some } \xi' \in K'_i, \ &\xi(\nneg^2) \leq \xi'(\nneg^2) \text{ and}\\
&\xi(\nneg^2 \setminus [0,n]^2) \leq \xi'(\nneg^2 \setminus [0,n]^2), \ n \in \nat \}.
\end{align*}
Then the criterion of relative compactness for $K_i''$ follows from that for $K'_i$, and~\eqref{eq10}--\eqref{eq12} imply~\eqref{eq9} with $K_i$ taken as the closure of $K_i''$.
\end{proof}

\subsection{Asymptotic regularity} \label{sec:asymptotic_regularity}
This section contains three Lemmas. Lemmas~\ref{lem:as_reg_z0} and~\ref{lem:as_reg_znew} prove that neither initial nor newly arriving flows concentrate along horizontal and vertical lines. These two results are combined in Lemma~\ref{lem:as_reg_zr} that implies the oscillation control result of the next section, and also is useful when deriving the limiting equations for the state descriptors in Section~\ref{sec:fluid_limits_fms}.

Recall from Section~\ref{sec:stat_distributions} that, for $b \geq a \geq 0$,
\[
H_a^b = \nneg \times [a,b], \quad V_a^b = [a,b] \times \nneg,
\]
and introduce similar notations
\[
H_a^\infty := \nneg \times [a, \infty), \quad V_a^\infty := [a, \infty) \times \nneg.
\]

\begin{lemma} \label{lem:as_reg_z0}
By \textup{(A.5) {\it and} (A.6)}, for any $\dlt > 0$ and $\eps > 0$, there exists an $a > 0$ such that 
\[
\liminf_{r \to \infty} \pr^r \{ \sup_{x \in \nneg} \| \cloz^r(0)(H_x^{x+a}) \| \vee \| \cloz^r(0)(V_x^{x+a}) \| \leq \dlt \} \geq 1 - \eps.
\]
\end{lemma}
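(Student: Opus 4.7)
Fix $\dlt, \eps > 0$. The plan is to control the supremum by splitting the range of $x$ into a tail $[M, \infty)$ (handled by a size cutoff) and a bounded part $[0, M]$ (handled by a finite grid), and then to transfer the resulting bounds from $\zeta^0$ to $\cloz^r(0)$ via weak convergence on continuity sets. By symmetry it suffices to argue for horizontal strips $H_x^{x+a}$, with the vertical case being identical.

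\emph{Tail step.} Since $\la 1, \zeta^0_i \ra$ is a.s. finite, $\zeta^0_i(H_M^\infty) \downarrow 0$ a.s., hence in probability, as $M \to \infty$. I would pick a deterministic $M$ with $\pr\{\zeta^0_i(H_M^\infty) \geq \dlt / 2\} < \eps/(4I)$ for every $i$. By (A.6) every positive $M$ is a.s. a continuity point of the second projection of $\zeta^0_i$, so weak convergence gives $\cloz^r_i(0)(H_M^\infty) \Rightarrow \zeta^0_i(H_M^\infty)$ and consequently $\limsup_r \pr^r\{\cloz^r_i(0)(H_M^\infty) > \dlt/2\} < \eps/(4I)$. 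This handles every strip $H_x^{x+a}$ with $x \geq M$, since such a strip is contained in $H_M^\infty$.

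\emph{Discretization step.} Atom-freeness of the second projection makes the distribution function $F_i(x) := \zeta^0_i(H_0^x)$ a.s. continuous, hence uniformly continuous on $[0, M+1]$. Therefore $\sup_{x \in [0, M]} \zeta^0_i(H_x^{x+a}) \to 0$ a.s. as $a \downarrow 0$, and I can fix a deterministic $a \in (0,1)$ with
\[
\pr\Bigl\{\sup_{x \in [0, M]} \zeta^0_i(H_x^{x+a}) > \dlt / 6\Bigr\} < \eps/(4I) \quad \text{for every } i.
\]
Place a grid $y_k := ka/2$, $k = 0, \ldots, N$, with $y_N > M + a$. For $x \in [0, M]$, taking $k$ maximal with $y_k \leq x$, one has $x + a < y_k + 3a/2 = y_{k+3}$, so $H_x^{x+a}$ lies in the union of three adjacent grid strips, yielding
\[
\sup_{x \in [0, M]} \cloz^r_i(0)(H_x^{x+a}) \;\leq\; 3 \max_{0 \leq k \leq N-3} \cloz^r_i(0)(H_{y_k}^{y_{k+1}}).
\]
The finitely many sets $H_{y_k}^{y_{k+1}}$ are a.s. continuity sets of $\zeta^0_i$, so joint weak convergence implies that the right-hand side converges in distribution to $3 \max_k \zeta^0_i(H_{y_k}^{y_{k+1}})$, which is bounded by $3 \sup_{x \in [0, M]} \zeta^0_i(H_x^{x+a})$ since every grid strip has width $a/2 < a$. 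Combining with the choice of $a$, for $r$ large enough the maximum in the display lies below $\dlt/2$ except on an event of probability at most $\eps/(4I) + o(1)$. A union bound over horizontal/vertical and $i = 1, \ldots, I$ then gives the target probability $1 - \eps$ in the liminf.

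The main obstacle is precisely the uniformity in $x \in \nneg$: weak convergence of $\cloz^r(0) \to \zeta^0$ by itself controls only one continuity set at a time, while the lemma requires simultaneous mass control over an infinite family of translated thin strips. The two-stage reduction — a size cutoff that makes the index set compact, followed by a finite grid with continuity-point endpoints approximating every strip of width $a$ within a factor of three — is tailored to this difficulty and crucially relies on the atom-freeness hypothesis (A.6).
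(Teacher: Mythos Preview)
Your argument is correct and complete (modulo cosmetic index issues in the grid bound), but it follows a different route from the paper. The paper does not split into tail and bounded parts. Instead it observes the single inequality
\[
\sup_{x \in \nneg}\|\xi(H_x^{x+a})\|\vee\|\xi(V_x^{x+a})\| \leq 2\sup_{n \in \nat}\|\xi(H_{(n-1)a}^{na})\|\vee\|\xi(V_{(n-1)a}^{na})\|,
\]
defines the set $\clm_a$ of measures for which the right-hand side is below $\dlt/2$, and proves that $\clm_a$ is \emph{open} in $\msr^I$ (using a tail estimate internally to reduce the infinite supremum to a finite one, then Portmanteau on closed sets). A single application of the Portmanteau theorem for open sets then gives $\liminf_r \pr^r\{\cloz^r(0)\in\clm_a\}\geq \pr\{\zeta^0\in\clm_a\}$, and the existence of a suitable $a$ with $\pr\{\zeta^0\in\clm_a\}\geq 1-\eps$ is outsourced to \cite[Lemma~A.1]{GW09}. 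So the paper packages the uniformity-in-$x$ issue into a topological statement about $\clm_a$ and applies weak convergence once, whereas you unwind the same content explicitly: a tail cutoff to make the index set compact, a finite grid with (A.6)-guaranteed continuity endpoints, and the continuous mapping theorem on finitely many evaluations. Your version is more elementary and self-contained (no external lemma, no openness verification); theirs is shorter and more conceptual.
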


{\it Proof.}
Fix $\dlt$ and $\eps$. Since, for any $\xi \in \msr^I$ and $a > 0$,  
\[
\sup_{x \in \nneg}  \| \xi(H_x^{x+a}) \| \vee \| \xi(V_x^{x+a}) \| \leq 2 \sup_{n \in \nat}  \| \xi(H_{(n-1) a}^{n a}) \| \vee \| \xi(V_{(n-1) a}^{n a}) \|,
\]
it suffices to find an $a$ such that 
\[
\liminf_{r \to \infty} \pr^r \{ \cloz^r(0) \in \clm_a \} \geq 1 - \eps,
\]
where $\clm_a := \{ \xi \in \msr^I \colon \sup_{n \in \nat}  \| \xi(H_{(n-1) a}^{n a}) \| \vee \| \xi(V_{(n-1) a}^{n a}) \| < \dlt / 2  \}$.

The set $\clm_a$ is open because $\xi^k \wto \xi \in \clm_a$  implies that $\xi^k \in \clm_a$ for $k$ large enough. Indeed, pick an $N \in \nat$ such that $\| \xi(H_{Na}^\infty) \| \vee \| \xi(V_{Na}^\infty) \| < \dlt / 2$. Then, by the Portmanteau theorem,
\begin{align*}
&\limsup_{k \to \infty} \sup_{n \in \nat} \| \xi^k (H_{(n-1) a}^{n a}) \| \vee \| \xi^k (V_{(n-1) a}^{n a}) \| \\
\leq &\limsup_{k \to \infty} \max_{1 \leq n \leq N} \| \xi^k (H_{(n-1) a}^{n a}) \| \vee \| \xi^k (V_{(n-1) a}^{n a}) \| \vee 
\| \xi^k (H_{N a}^\infty) \| \vee \| \xi^k (V_{N a}^\infty) \| \\
\leq & \max_{1 \leq n \leq N} \| \xi (H_{(n-1) a}^{n a}) \| \vee \| \xi (V_{(n-1) a}^{n a}) \| \vee 
\| \xi (H_{N a}^\infty) \| \vee \| \xi (V_{N a}^\infty) \| < \dlt/2.
\end{align*}
By \textup{(A.6)} and \cite[Lemma A.1]{GW09}, there exists an~$a$ such that $\pr \{ \zeta^0 \in \clm_a \} \geq 1 - \eps$. Then, again by the Portmanteau theorem,
\begin{flalign*}
& &\liminf_{r \to \infty} \pr^r \{ \cloz^r(0) \in \clm_a \} \geq \pr \{ \zeta^0 \in \clm_a \} \geq 1 - \eps. & & \qed
\end{flalign*}

Besides being used in the proof of the fluid limit theorem, the following result is also used when establishing convergence of the stationary distributions of the scaled state descriptors, see Section~\ref{sec:stat_distributions}.
\begin{lemma} \label{lem:as_reg_znew}
By \textup{(A.3) {\it and} (A.4)}, for any $T > 0$, $\dlt > 0$ and $\eps > 0$, there exists an $a > 0$ such that 
\[
\liminf_{r \to \infty} \pr^r \{ \underbrace{ \sup_{t \in [0,T]} \sup_{x \in \nneg} \| \cloznew(t)(H_x^{x+a}) \| \vee \| \cloznew(t)(V_x^{x+a}) \| \leq \dlt }_{\displaystyle{=: \Omega_\ast^r}} \} \geq 1 - \eps.
\]
\end{lemma}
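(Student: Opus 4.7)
The plan is to bound the horizontal and vertical strip mass separately for each coordinate $i$. The horizontal case is handled cleanly using the load convergence of Lemma~\ref{lem:load}, since the shift in the $D$-coordinate evolves deterministically at rate one. The vertical case is more delicate because the shift of $B_{ik}^r$ involves the random service $S_i(\oz^r, U_{ik}^r, t)$, whose integrand $\lmb_i(\oz^r(\cdot))$ may degenerate without a uniform lower bound.

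For horizontal strips, for $x > 0$ flow $k$ lies in $H_x^{x+a}$ at time $t$ iff $D_{ik}^r \in [x + (t - U_{ik}^r),\, x + a + (t - U_{ik}^r)]$. Introduce the joint random measure $\bar{\mathcal{K}}_i^r := r^{-1}\sum_{k:\, U_{ik}^r \leq T}\dlt_{(U_{ik}^r, D_{ik}^r)}$ on $[0,T]\times\nneg$; by (A.3)--(A.4), the independence of sizes/patiences from arrivals, and an extension of Lemma~\ref{lem:load}, $\bar{\mathcal{K}}_i^r$ converges weakly to the product of the Lebesgue measure on $[0,T]$ scaled by $\eta_i$ and the patience marginal $\theta_i^D$ of $\theta_i$. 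Then $\cloznew_i(t)(H_x^{x+a}) \leq \bar{\mathcal{K}}_i^r(A_{x,t})$ with $A_{x,t} := \{(u,d) : u \leq t,\ d \in [x + t - u,\, x + a + t - u]\}$, and the limit mass of $A_{x,t}$ equals $\eta_i \int_0^t \pr\{D_i \in [x + t - u,\, x + a + t - u]\}\, du$; substituting $v := x + t - u$ and applying Lemma~\ref{lem1} bounds this by $\eta_i a$, uniformly in $(x,t)$. The passage from pointwise weak convergence to a uniform-in-$(x,t)$ pre-limit bound is obtained by sandwiching the indicators of $A_{x,t}$ between continuous functions on a grid of mesh $\sim a$ and invoking the Portmanteau theorem, in the spirit of the proof of Lemma~\ref{lem:as_reg_z0}.

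For vertical strips, decompose $S_i(\oz^r, U_{ik}^r, t) = L_i^r(t) - L_i^r(U_{ik}^r)$ with $L_i^r(s) := \int_0^s \lmb_i(\oz^r(u))\, du$ non-decreasing and $m_i$-Lipschitz. The substitution $\tilde{B}_{ik}^r := B_{ik}^r + L_i^r(U_{ik}^r)$ converts the condition $B_{ik}^r(t) \in [x, x+a]$ into $\tilde{B}_{ik}^r \in [x + L_i^r(t),\, x + a + L_i^r(t)]$, so that $\cloznew_i(t)(V_x^{x+a}) \leq r^{-1}\sup_{y \geq 0}\#\{k \leq E_i^r(T):\, \tilde{B}_{ik}^r \in [y, y + a]\}$. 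Split by arrival time: the contribution from $U_{ik}^r \in [0,\tau]$ is at most $E_i^r(\tau)/r \Rightarrow \eta_i \tau$, made small by choosing $\tau$ small; for $U_{ik}^r \in [\tau, T]$, on the high-probability event $\{\oz^r_i(u) \geq \kappa$ for all $u \in [\tau/2, T]\}$, continuity of $\lmb_i(\cdot)$ and non-idling give $\lmb_i(\oz^r(u)) \geq c > 0$ on $[\tau/2, T]$, so that $L_i^r$ is strictly increasing with slope $\geq c$. An analog of the horizontal argument, using the joint measure of $(U_{ik}^r, B_{ik}^r)$, the change of variable $v = L_i^r(u)$, and Lemma~\ref{lem1} applied to $B_i$, then yields a limit bound $\leq \eta_i a / c$, uniformly in $y$. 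The main obstacle is precisely the pre-limit lower bound $\oz^r_i(u) \geq \kappa$ on $[\tau/2, T]$: one must derive it without invoking the fluid-limit convergence that is being proved, relying instead on direct estimates from the rate-constrained dynamics together with tightness (Lemma~\ref{lem:compact_containment}), using that new arrivals occur at intensity $r\eta_i$ while per-flow departures are of order one.
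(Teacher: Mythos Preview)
Your approach diverges from the paper's, and while the horizontal-strip part is essentially workable, the vertical-strip part has two genuine gaps.

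For horizontal strips your idea is sound: since the deadline shift is the deterministic $t - U_{ik}^r$, convergence of the empirical measure of $(U_{ik}^r, D_{ik}^r)$ controls everything, and Lemma~\ref{lem1} bounds the limiting strip mass by $\eta_i a$. The uniformity over $(x,t)$ via a finite grid is routine.

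The vertical case does not go through as written. Your ``analog of the horizontal argument'' invokes the change of variable $v = L_i^r(u)$, but $L_i^r$ is a \emph{random} function depending on the entire history of $\oz^r$ --- in particular on the sizes $B^r_{ij}$, $j<k$, that you are simultaneously integrating against. Weak convergence of the empirical measure of $(U_{ik}^r, B_{ik}^r)$ to a product law says nothing about the joint law of $(L_i^r(U_{ik}^r), B_{ik}^r)$, and there is no limit for $L_i^r$ to substitute without circularity. Making this rigorous would require a conditional-independence/martingale argument plus a uniform concentration bound over~$y$, none of which you supply. The second gap is the one you flag yourself: the pre-limit lower bound $\oz_i^r(u) \ge \kappa$ on $[\tau/2,T]$. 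This \emph{can} be obtained non-circularly via the infinite-server lower bound (the content of Lemma~\ref{lem:inf_server_queue}, whose proof is independent of the present lemma), but you do not actually carry it out.

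The paper avoids both problems with a single device: for fixed $t$ it sets $\tau := \sup\{s \le t : \oznew_i(s) < \dlt/4\}$. By construction $\oz_i^r \ge \oznew_i \ge \dlt/4$ on $(\tau,t]$, so together with the compact-containment bound $\|\oz^r\| \le M$ one gets $\lmb_i(\oz^r(\cdot)) \ge \lmb_*$ on $(\tau,t]$ automatically --- no separate lower-bound lemma is needed. The mass at time $\tau$ is at most $\dlt/2$ by a crude jump estimate. For flows arriving in $(\tau,t]$, the paper then grids the $(B,D)$-plane into $a\times a$ boxes $I_{m,n}$ and observes a \emph{separation} property: two flows in the same box whose arrival times differ by more than $\gamma := 3a/(\lmb_*\wedge 1)$ have residual sizes (and residual patiences) at time $t$ differing by at least $2a$, so at most one can lie in any strip of width $a$. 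Hence all contributing flows from a given box arrive within a time window of length $\gamma$, and Lemma~\ref{lem:load} controls each such contribution uniformly; summing over the $O(N^2)$ relevant boxes and choosing $\gamma \sim \dlt/\|\eta\|$ finishes. This combinatorial argument treats the horizontal and vertical strips symmetrically and never mentions $L_i^r$ at all --- that is the missing idea in your vertical analysis.
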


\begin{proof}
Fix $T$, $\dlt$ and $\eps$. We first construct auxiliary events $\Omega_0^r$ such that $\liminf_{r \to \infty} \pr^r \{ \Omega_0^r \} \geq 1 - \eps$, and then show that $\Omega_\ast^r \supseteq \Omega_0^r$ for all~$r$, which implies the theorem.

{\it Definition of $\Omega_0^r$.} By Lemma~\ref{lem:compact_containment}, there exists a compact set $K \subset \msr^I$ such that
\[
\liminf_{r \to \infty} \pr^r \{ \underbrace{\cloz^r(t) \in K \text{ for all } t \in [0,T] }_{\displaystyle{=: \Omega_1^r}} \} \geq 1 - \eps,
\]
and by Proposition~\ref{pr:rel_comp_criterion}, $M := \sup_{\xi \in K} \| \xi(\nneg^2) \| < \infty$ and $\sup_{\xi \in K} \| \xi(\nneg^2) \setminus [0,L]^2 \| \leq \dlt / 4$ for a~large~enough $L$.

For each~$i$, the rate function $\lmb_i(\cdot)$ is positive on $\{ z \in \nneg^I \colon z_i > 0 \}$ and, by Lemma~\ref{lem:Lmb_continuous}, it is continuous there. Hence,
\begin{equation} \label{eq17}
\lmb_\ast := \min_{1 \leq i \leq I} \inf \{ \lmb_i(z) \colon z_i \geq \dlt  / 4, \, \| z \| \leq M \} > 0.
\end{equation}
Put
\[
\gamma : = \frac{\dlt}{72 \| \eta \|} \wedge T \quad \text{and} \quad a:= \frac{\gamma(\lmb_\ast \wedge 1)}{3}.
\]
Also pick an $N$ large enough so that
\[
Na > L + (\|m\| \vee 1) T.
\]
For $m,n \in \nat$, define the sets
\begin{align*}
I_{m,n} &:= [(m-1)a, ma) \times [(n-1)a, na), \\
I^{m,n} &:= [(m-2)^+ a, (m+1)a) \times [(n-2)^+a, (n+1)a),
\end{align*}
and pick functions $g_{m,n} \in \mathbf{C}_{\nneg^2 \to [0,1]}$ such that
\[
\ind_{I_{m,n}}(\cdot) \leq g_{m,n}(\cdot) \leq \ind_{I^{m,n}}(\cdot).
\]
Since $\theta$ is a vector of probability measures,
\begin{equation} \label{eq18}
\sum_{m,n \in \nat} \| \la g_{m,n}, \theta \ra \| \leq \| \sum_{m,n \in \nat} \theta(I^{m,n}) \| \leq 9.
\end{equation}
By Lemma~\ref{lem:load} and the continuous mapping theorem, for all $m,n \in \nat$, $\la g_{m,n}, \clol^r(\cdot) \ra \Rightarrow \eta(\cdot)\la g_{m,n}, \theta \ra$ as $r  \to \infty$. Since the limits are deterministic, we have convergence in probability. Since the limits are continuous, we have uniform convergence on compact sets. Hence, 
\[
\lim_{r \to \infty} \pr^r \{ \underbrace{ \max_{1 \leq m,n \leq N} \sup_{t \in [0,T]} \| \la g_{m,n}, \clol^r(t) \ra - t \eta \ast \la g_{m,n}, \theta \ra \| 
\leq \dlt / (16 N^2) }_{\displaystyle{=: \Omega_2^r}} \} = 1.
\]
Similarly, by \textup{(A.3)}, 
\[
\lim_{r \to \infty} \pr^r \{ \underbrace{ \sup_{t \in [0,T]} \| \ove^r(t) - t \eta \| 
\leq \dlt / 16 }_{\displaystyle{=: \Omega_3^r}} \} = 1.
\]
For all $r$, put 
\[
\Omega_0^r := \Omega_1^r \cap \Omega_2^r \cap \Omega_3^r,
\]
then $\liminf_{r \to \infty} \pr^r \{ \Omega_0^r \} \geq 1 - \eps$, and it is left to show that $\Omega_0^r \subseteq \Omega_\ast^r$.

{\it Proof of $\Omega_0^r \subseteq \Omega_\ast^r$.} Fix $r \in \clr$, $t \in [0,T]$, $x \in \nneg$ and $i$. Also fix an outcome $\omega \in \Omega_0^r$. All random objects in the rest of the proof will be evaluated at this~$\omega$. We have to check that
\begin{subequations}
\begin{align}
\cloznew_i(t)(H_x^{x+a}) \leq \delta, \label{eq13} \\
\cloznew_i(t)(V_x^{x+a}) \leq \delta. \label{eq14}
\end{align}
\end{subequations}
We will show~\eqref{eq13}, \eqref{eq14} follows similarly.

Define the random time $\tau := \sup \{ s \leq t \colon \oznew_i(s) < \dlt / 4 \}$ (supremum over the empty set equals $0$ by convention). Although in general  $\tau$ is not a continuity point for $\oznew_i(\cdot)$, we still can estimate $\oznew_i(\tau)$:
\begin{equation} \label{eq15}
\oznew_i(\tau) \leq \dlt / 2.
\end{equation}
Indeed, if $\tau = 0$, then $\oznew_i(\tau) = 0$, and~\eqref{eq15} holds. If $\tau > 0$, pick a~$\tau' \in [(\tau - \gamma)^+, \tau]$ such that $\oznew_i(\tau') < \dlt / 4$. Then, by the definition of $\Omega_3^r$,
\[
\oznew_i(\tau) \leq \oznew_i(\tau') + (\ove_i^r(\tau) - \ove_i^r(\tau')) \leq \dlt / 4 + \eta_i(\tau - \tau') + \dlt / 8 \leq \| \eta \| \gamma + 3 \dlt / 8,
\]
and~\eqref{eq15} holds by the choice of $\gamma$.

Now, if $\tau = t$, then~\eqref{eq15} implies~\eqref{eq13}, and the proof is finished. Assume that $\tau < t$. Then, by the choice of $L$ and~\eqref{eq15},
\begin{gather*}
\cloznew_i(t)(H_x^{x+a}) \leq \cloznew_i(t)(H_x^{x+a} \cap [0,L]^2) + \dlt / 4 \\
\leq \underbrace{\oznew_i(\tau)}_{\displaystyle{\leq \dlt /2}} + \frac{1}{r} \sum_{E_i^r(\tau)+1}^{E_i^r(t)} \underbrace{ \ind_{H_x^{x+a} \cap [0,L]^2}(B_{ik}^r - S_i(\oz^r,U_{ik}^r,t), D_{ik}^r - (t - U_{ik}^r) }_{\displaystyle{=: s_k}} + \dlt / 4
\end{gather*}
and in order to have~\eqref{eq13}, it suffices to show that
\begin{equation} \label{eq19}
\Sgm := \frac{1}{r} \sum_{E_i^r(\tau)+1}^{E_i^r(t)} s_k = \sum_{m,n \in \nat} \underbrace{\frac{1}{r} \sum_{E_i^r(\tau)+1}^{E_i^r(t)} s_k \ind_{I_{m,n}}(B_{ik}^r, D_{ik}^r)}_{\displaystyle{=: \Sgm_{m,n}}} \leq \dlt / 4.
\end{equation}
First note that 
\begin{equation} \label{eq20}
\Sgm_{m,n} = 0 \quad \text{if} \quad m > N \text{ or } n > N.
\end{equation}
Indeed, consider a flow on route $i$ that arrived at $U_{ik}^r \in (\tau,t]$ with $(B_{ik}^r, D_{ik}^r) \in I_{m,n}$. If $m > N$, then $B_{ik}^r > L + \|m\|T$ by the choice of $N$, $B_{ik}^r - S_i(\oz^r,U_{ik}^r,t) > L$ by the rate constraints, and $s_k = 0$. If $n > N$, then $D_{ik}^r > L + T$ by the choice of $N$, $D_{ik}^r - (t - U_{ik}^r) > L$ and again $s_k = 0$.

Now we estimate $\Sgm_{m,n}$ for $1 \leq m,n \leq N$. Fix $m$, $n$. Consider two flows $k < l$ such that $U_{ik}^r, U_{il}^r \in (\tau,t]$ and $(B_{ik}^r,D_{ik}^r), (B_{il}^r,D_{il}^r) \in I_{m,n}$. In $(\tau, t]$, $\oz_i^r(\cdot) \geq \oznew_i(\cdot) \geq \eps / 4$ and $\| \oz^r(\cdot) \| \leq M$, and then \eqref{eq17} implies that
\[
\inf_{s \in (\tau,t]} \lmb_i(\oz^r(s)) \geq \lmb_\ast.
\]
If $U_{il}^r - U_{ik}^r \geq \gamma$, then
\begin{align*}
(B_{il}^r - S_i(\oz^r,U_{il}^r,t)) - (B_{ik}^r - S_i(\oz^r,U_{ik}^r,t)) \geq \overbrace{\gamma \lmb_\ast }^{\displaystyle{\geq 3a}} - \overbrace{(B_{ik}^r - B_{il}^r)}^{\displaystyle{\leq a}} \geq 2a, \\
(D_{il}^r - (t - U_{il}^r) - (D_{ik}^r - (t - U_{ik}^r)) \geq \underbrace{\gamma}_{\displaystyle{\geq 3a}} - \underbrace{(D_{ik}^r - D_{il}^r)}_{\displaystyle{\leq a}} \geq 2a,
\end{align*}
and hence at most one of $s_k$ and $s_l$ is non-zero. This implies that all arrivals to route~$i$ during $(\tau,t]$ that correspond to non-zero summands in $\Sgm_{m,n}$ occur actually during a smaller interval $(t_{m,n}, t_{m,n}+\gamma] \subseteq (\tau,t]$. Then, by the definition of $\Omega_2^r$,
\begin{align*}
\Sgm_{m,n} &\leq \frac{1}{r} \sum_{k = E_i^r(t_{m,n})+1}^{E_i^r(t_{m,n}+\gamma)} \ind_{I_{m,n}}(B_{ik}^r, D_{ik}^r) \leq \sup_{s \in [0, T - \gamma]} \frac{1}{r} \sum_{k = E_i^r(s+1)}^{E_i^r(s+\gamma)} g_{m,n}(B_{ik}^r, D_{ik}^r) \\
&= \sup_{s \in [0, T - \gamma]} (\la g_{m,n}, \clol_i^r(s+\gamma) \ra - \la g_{m,n}, \clol_i^r(s) \ra) \leq \gamma \eta_i \la g_{m,n}, \theta_i \ra + \dlt / (8N^2).
\end{align*}
We plug the last inequality and \eqref{eq20} into $\Sgm = \sum_{m,n \in \nat} \Sgm_{m,n}$, then \eqref{eq19} follows by \eqref{eq18} and the choice of~$\gamma$.
\end{proof}

The previous two lemmas are summed up into the following result.
\begin{lemma} \label{lem:as_reg_zr}
By \textup{(A.3)--(A.6)}, for any $T > 0$, $\dlt > 0$ and $\eps > 0$, there exists an $a > 0$ such that 
\[
\liminf_{r \to \infty} \pr^r \{ \sup_{t \in [0,T]} \sup_{x \in \nneg} \| \cloz^r(t)(H_x^{x+a}) \| \vee \| \cloz^r(t)(V_x^{x+a}) \| \leq \dlt \} \geq 1 - \eps.
\]
\end{lemma}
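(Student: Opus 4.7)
The plan is to derive Lemma~\ref{lem:as_reg_zr} by combining Lemmas~\ref{lem:as_reg_z0} and~\ref{lem:as_reg_znew} through the decomposition $\cloz^r(t) = \clozinit(t) + \cloznew(t)$. The only nontrivial step is to reduce control of thin strips for $\clozinit(t)$, uniformly in $t \in [0,T]$, to control of thin strips for the initial configuration $\cloz^r(0)$.

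First I would pull strips in $\clozinit(t)$ back to strips in $\cloz^r(0)$. An initial flow $l$ on route~$i$ contributes a unit mass to $\clozinit_i(t)$ at the point $(B_{il}^0 - S_i(\oz^r,0,t),\, D_{il}^0 - t)$ provided both coordinates are positive. Hence this dot lies in $H_x^{x+a}$ exactly when $D_{il}^0 \in [x+t,\, x+a+t]$, and in $V_x^{x+a}$ exactly when $B_{il}^0 \in [x+S_i(\oz^r,0,t),\, x+a+S_i(\oz^r,0,t)]$. Since every initial flow is itself counted by $\cloz^r(0)$ at the point $(B_{il}^0, D_{il}^0)$ (initial sizes and patience times are a.s.\ positive), this gives the pointwise bounds
\[
\clozinit_i(t)(H_x^{x+a}) \leq \cloz^r(0)(H_{x+t}^{x+a+t}), \qquad \clozinit_i(t)(V_x^{x+a}) \leq \cloz^r(0)\bigl(V_{x+S_i(\oz^r,0,t)}^{x+a+S_i(\oz^r,0,t)}\bigr).
\]
Taking $\sup_{t \in [0,T]} \sup_{x \geq 0}$ on both sides, and noting that $x+t$ and $x+S_i(\oz^r,0,t)$ then range over $\nneg$, one obtains
\[
\sup_{t \in [0,T]} \sup_{x \geq 0} \|\clozinit(t)(H_x^{x+a})\| \vee \|\clozinit(t)(V_x^{x+a})\| \ \leq\ \sup_{y \geq 0} \|\cloz^r(0)(H_y^{y+a})\| \vee \|\cloz^r(0)(V_y^{y+a})\|.
\]

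Given $T,\delta,\eps$, apply Lemma~\ref{lem:as_reg_z0} with parameters $\delta/2$ and $\eps/2$ to produce $a_1 > 0$ for which the right-hand side above is bounded by $\delta/2$ with $\liminf$-probability at least $1-\eps/2$, and apply Lemma~\ref{lem:as_reg_znew} with $T$, $\delta/2$ and $\eps/2$ to produce $a_2 > 0$ that controls $\sup_{t \in [0,T]} \sup_{x \geq 0} \|\cloznew(t)(H_x^{x+a_2})\| \vee \|\cloznew(t)(V_x^{x+a_2})\|$ by $\delta/2$ with $\liminf$-probability at least $1-\eps/2$. Set $a := a_1 \wedge a_2$ and combine via a union bound: on the intersection of the two good events,
\[
\|\cloz^r(t)(H_x^{x+a})\| \leq \|\clozinit(t)(H_x^{x+a})\| + \|\cloznew(t)(H_x^{x+a})\| \leq \tfrac{\delta}{2} + \tfrac{\delta}{2} = \delta
\]
uniformly in $t \in [0,T]$ and $x \geq 0$, and the same bound holds with $V_x^{x+a}$ in place of $H_x^{x+a}$. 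The $\liminf$ of probabilities of this intersection is at least $1-\eps$, giving the lemma.

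I do not expect any real obstacle here: the heavy machinery sits in Lemmas~\ref{lem:as_reg_z0} and~\ref{lem:as_reg_znew}, and the only delicate point is the pull-back of time-$t$ strips to initial strips, which is immediate from the explicit dynamics $(B_{il}^0,D_{il}^0) \mapsto (B_{il}^0 - S_i(\oz^r,0,t),\, D_{il}^0 - t)$ — a shift on the plane that preserves strip widths.
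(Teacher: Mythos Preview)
Your proposal is correct and follows essentially the same approach as the paper: the pull-back inequalities $\clozinit_i(t)(H_x^{x+a}) \leq \cloz_i^r(0)(H_{x+t}^{x+a+t})$ and $\clozinit_i(t)(V_x^{x+a}) \leq \cloz_i^r(0)(V_{x+S_i(\oz^r,0,t)}^{x+a+S_i(\oz^r,0,t)})$ are exactly the ones the paper uses, after which both arguments conclude by combining Lemmas~\ref{lem:as_reg_z0} and~\ref{lem:as_reg_znew} via the decomposition $\cloz^r = \clozinit + \cloznew$. Your explicit $\dlt/2$, $\eps/2$, $a = a_1 \wedge a_2$ bookkeeping just spells out what the paper leaves implicit.
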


\begin{proof} 
Note that
\begin{align*}
&\sup_{x \in \nneg} \| \clozinit(t)(H_x^{x+a}) \| \vee \| \clozinit(t)(V_x^{x+a}) \| \\
\leq &\sup_{x \in \nneg} \| \cloz^r(0)(H_x^{x+a}) \| \vee \| \cloz^r(0)(V_x^{x+a}) \|.
\end{align*}
Indeed,
\[
\clozinit_i (t)(H_x^{x+a}) \leq \cloz_i^r(0)(H_{x+t}^{x+a+t}) \quad \text{and} \quad 
\clozinit_i(t) (V_x^{x+a}) \leq \cloz_i^r(0)(V_{x+S_i(\oz^r,0,t)}^{x+a+S_i(\oz^r,0,t)}).
\]
Then the lemma follows by $\cloz^r(\cdot) = (\clozinit + \cloznew)(\cdot)$ and Lemmas \ref{lem:as_reg_z0} and \ref{lem:as_reg_znew}.
\end{proof}

\subsection{Oscillation control} \label{sec:oscillation_control}
Here we establish the second key ingredient of tightness of the scaled state descriptors, the first one is proven in Section~\ref{sec:compact_containment}.
\begin{lemma}
By \textup{(A.3)--(A.6)}, for any $T > 0$, $\dlt > 0$ and $\eps > 0$, there exists an $h > 0$ such that
\[
\liminf_{r \to \infty} \pr^r \{ \underbrace{\omega(\cloz^r, h, T) \leq \dlt}_{\displaystyle{=: \Omega_\ast^r}} \} \geq 1 - \eps,
\]
where $\omega(\cloz^r, h, T) := \sup \{ d_I(\cloz^r(s), \cloz^r(t)) \colon s,t \in [0,T], \ |s - t| < h \}$.
\end{lemma}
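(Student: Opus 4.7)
The plan is to bound $d_I(\cloz^r(s),\cloz^r(t))$ for $|t-s|<h$ coordinate-wise by separately accounting for: (i) the deterministic spatial displacement of atoms that survive from time $s$ to time $t$; (ii) the mass lost through transfer completions or abandonments in $[s,t]$; and (iii) the mass of new arrivals in $(s,t]$. All three contributions will be made uniformly small using only the universal rate bound $\lmb_i(\cdot)\le m_i$, Lemma~\ref{lem:as_reg_zr}, and \textup{(A.3)}; no detailed information about the rate allocation function $\lmb_i(\cdot)$ is needed.

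Fix $s<t$ in $[0,T]$ with $t-s<h$, and introduce the deterministic shift $\phi_i(b,d):=(b-S_i(\oz^r,s,t),d-(t-s))$. Since $S_i(\oz^r,s,t)\le m_i h$, one has $\|x-\phi_i(x)\|\le (m\vee 1)h$ with $m:=\max_i m_i$. A flow on route~$i$ that is alive at time $s$ has its atom translated by $\phi_i$ if and only if it is still alive at $t$; otherwise it must have satisfied $b_{ik}(s)\le m_i h$ or $d_{ik}(s)\le h$, i.e., its atom at time $s$ lay in $V_0^{m_i h}\cup H_0^h$. New arrivals in $(s,t]$ add to $\cloz_i^r(t)$ total mass at most $\ove_i^r(t)-\ove_i^r(s)$. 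Therefore, for every closed $B\subseteq\nneg^2$ and every $i$,
\begin{align*}
\cloz_i^r(s)(B)&\le \cloz_i^r(t)(\phi_i(B))+\cloz_i^r(s)(V_0^{m_i h}\cup H_0^h),\\
\cloz_i^r(t)(B)&\le \cloz_i^r(s)(\phi_i^{-1}(B))+(\ove_i^r(t)-\ove_i^r(s)),
\end{align*}
and both $\phi_i(B)$ and $\phi_i^{-1}(B)=B+(S_i(\oz^r,s,t),t-s)$ are contained in $B^{(m\vee 1)h}$.

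Fix $\dlt,\eps>0$. Apply Lemma~\ref{lem:as_reg_zr} with mass threshold $\dlt/4$ and probability $1-\eps/2$ to obtain $a>0$ such that, on an event of probability at least $1-\eps/2$ for all large $r$, every strip of width at most $a$ carries $\cloz^r(u)$-mass at most $\dlt/4$, uniformly over $u\in[0,T]$ and over the strip position. By \textup{(A.3)} and continuity of the limit $u\mapsto u\eta$, $\sup_{u\in[0,T]}\|\ove^r(u)-u\eta\|\to 0$ in probability, so on a further event of probability at least $1-\eps/2$ for large $r$, one has $\ove_i^r(t)-\ove_i^r(s)\le \|\eta\|(t-s)+\dlt/4$ for all $s\le t$ in $[0,T]$ and all $i$. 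Choose $h$ so that $(m\vee 1)h\le a\wedge\dlt$ and $\|\eta\|h\le\dlt/4$. On the intersection of the two events the two displayed inequalities give $d(\cloz_i^r(s),\cloz_i^r(t))\le\dlt$ for each $i$ and all $s,t\in[0,T]$ with $|t-s|<h$, whence $d_I(\cloz^r(s),\cloz^r(t))\le\dlt$, proving the oscillation bound.

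The only non-routine ingredient is Lemma~\ref{lem:as_reg_zr}, which supplies the uniform-in-time, uniform-in-position control of strip masses needed to bound effect~(ii); given that lemma, the rest is clean book-keeping with the rate constraint and the fluid-scale law of large numbers for arrivals, and no uniformity issue for the rate allocation $\lmb_i(\cdot)$ arises.
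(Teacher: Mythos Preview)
Your proof is correct and follows essentially the same approach as the paper's: both control the Prokhorov distance by bounding departures via the strip-mass estimate of Lemma~\ref{lem:as_reg_zr}, new arrivals via \textup{(A.3)}, and the displacement of surviving atoms via the rate cap $\lmb_i\le m_i$. The paper additionally introduces a stopping time $\tau=\inf\{u\in[s,t]:\oz_i^r(u)=0\}$ when proving $\cloz_i^r(s)(B)\le\cloz_i^r(t)(B^\dlt)+\dlt$, but this is not needed: as your argument makes explicit, any atom at time $s$ with coordinates exceeding $(m_i h,h)$ necessarily survives to time $t$, so the direct tracking works whether or not the route empties in between. Your bookkeeping constants differ slightly (you split the strip mass into two $\dlt/4$ pieces where the paper uses a single $\dlt$ from $\Omega_2^r$), but the structure is the same.
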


\begin{proof}
Fix $T$, $\dlt$ and $\eps$. By \textup{(A.3)},
\[
\lim_{r \to \infty} \pr^r \{ \underbrace{\sup_{t \in [0,T]} \| \ove^r(t) - t \eta \| \leq \dlt / 4}_{\displaystyle{=:\Omega_1^r}} \} = 1.
\]
By Lemma~\ref{lem:as_reg_zr}, there exists an $a > 0$ such that
\[
\liminf_{r \to \infty} \pr^r \{ \underbrace{ \sup_{t \in [0,T]} \| \cloz^r(t)(H_0^a \cup V_0^a) \| \leq \dlt }_{\displaystyle{=:\Omega_2^r}} \} \geq 1 - \eps.
\]
Pick an $h$ such that $h (\|m\| \vee 1) \leq \dlt \wedge a$ and $h \| \eta \| \leq \dlt / 2$. We now show that, for all $r \in \clr$, $\Omega_\ast^r 
\supseteq \Omega_1^r \cap \Omega_2^r$, then the lemma follows.

Fix $r \in \clr$, $i$ and $s,t \in [0,T]$ such that $s < t$, $t - s < h$. Also fix an outcome $\omega \in \Omega_1^r \cap \Omega_2^r$. All random objects in the rest of the proof will be evaluated at this $\omega$. We have to check that, for any non-empty closed Borel subset $B \subseteq \nneg^2$,
\begin{subequations}
\begin{align}
\cloz_i^r(s)(B) &\leq \cloz_i^r(t)(B^\dlt) + \dlt, \label{eq:zs<zt} \\
\cloz_i^r(t)(B) &\leq \cloz_i^r(s)(B^\dlt) + \dlt. \label{eq:zt<zs}
\end{align}
\end{subequations}
First we check~\eqref{eq:zs<zt}. Note that it suffices to show
\begin{equation} \label{eq16}
\cloz_i^r(s)(B) \leq \cloz_i^r(\tau)(B^\dlt) + \dlt, 
\end{equation}
where $\tau := \inf \{ u \in [s,t] \colon \oz_i^r(u) = 0 \}$ and infimum over the empty set equals $t$ by definition. Indeed, if $\tau  = t$, then~\eqref{eq16} implies~\eqref{eq:zs<zt}. If $\tau < t$, then by the right-continuity of $\oz_i^r(\cdot)$, $\cloz_i^r(\tau)(B^\dlt) = \oz_i^r( \tau ) = 0$, and again~\eqref{eq16} implies~\eqref{eq:zs<zt}. 

Now prove~\eqref{eq16}. If $\tau = s$, then~\eqref{eq16} holds. Assume that $\tau > s$. By the defintion of $\Omega_2^r$,
\begin{equation} \label{eq21}
\cloz_i^r(s)(B) \leq \cloz_i^r(s)(B \cap [a,\infty)^2) + \dlt.
\end{equation}
Since $S_i(\oz^r,s,\tau) < \|m\| h \leq \dlt \wedge a$ and $\tau - s < h \leq \dlt \wedge a$,
\[
\cloz_i^r(s)(B \cap [a,\infty)^2) \leq \cloz_i^r(\tau)(B^\dlt),
\]
which together with~\eqref{eq21} implies~\eqref{eq16}.

It is left to check~\eqref{eq:zt<zs}. Since $S_i(\oz^r,s,\tau) < \|m\| h \leq \dlt$ and $\tau - s < h \leq \dlt$,
\[
\cloz_i^r(t)(B) \leq \cloz_i^r(s)(B^\dlt) + (\ove_i^r(t) - \ove_i^r(s)),
\]
and~\eqref{eq:zt<zs} follows by the definition of $\Omega_1^r$.
\end{proof}

\subsection{Fluid limits are bounded away from zero} \label{sec:fluid_limits_bounded_from_zero}
Rate constraints provide infinite-server-queue lower bounds for bandwidth-sharing networks. First we show that properly scaled infinite server queues are bounded away from zero, and then the same follows for bandwidth-sharing networks with rate constraints.

Consider a sequence of infinite server queues marked by $r \in \clr$. At $t = 0$, the queues are empty. To the r-th queue, customers arrive according to a counting process $A^r(\cdot)$ and have i.i.d. service times $\{ B_k^r \}_{k \in \nat}$ distributed as $B^r$. Let $\oa^r(\cdot) := A^r(\cdot)/r \Rightarrow \alpha(\cdot)$, where $\alpha(t) : = t \alpha$ and $\alpha > 0$. Also let $B^r \Rightarrow B$, where $\pr \{ B > 0 \} > 0$. Denote by $Q^r(\cdot)$ the population process of the $r$-th queue and put $\oq^r(\cdot) := Q^r(\cdot) / r$.

\begin{lemma} \label{lem:inf_server_queue}
For any $\dlt > 0$, there exists a $C(\dlt) > 0$ such that, for any $\Dlt > \dlt$,
\[
\pr^r \{ \inf_{\dlt \leq t \leq \Dlt} \oq^r(t) \geq C(\dlt) \} \to 1 \quad \text{as $r \to \infty$}.
\] 
\end{lemma}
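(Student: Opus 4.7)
The plan is to lower bound $Q^r(t)$ by counting only those customers that have both arrived recently (within the last $b$ units of time, for a judiciously chosen $b<\delta$) and have service time exceeding $b$. Any such customer is automatically still in service at time $t$, so this gives a lower bound valid for every $t\in[\dlt,\Dlt]$ simultaneously. Since $\pr\{B>0\}>0$ and the distribution of $B$ has at most countably many atoms, I can pick $b\in(0,\dlt)$ which is a continuity point of the law of $B$ and satisfies $p:=\pr\{B>b\}>0$. By $B^r\Rightarrow B$ and the choice of $b$, we then have $p^r:=\pr^r\{B^r>b\}\to p$.

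Let $T_1^r\le T_2^r\le\ldots$ be the arrival epochs of the $r$-th queue, so $T_k^r=\inf\{s\ge 0\colon A^r(s)\ge k\}$. Define the thinned counting process
\[
\tilde A^r(s):=\sum_{k=1}^{A^r(s)}\ind\{B_k^r>b\},\qquad s\ge 0.
\]
Then for every $t\ge\dlt>b$,
\[
Q^r(t)\ \ge\ \sum_{k\ge 1}\ind\{T_k^r\in(t-b,t]\}\,\ind\{B_k^r>b\}\ =\ \tilde A^r(t)-\tilde A^r(t-b).
\]
Because the service times $\{B_k^r\}_{k\in\nat}$ are i.i.d.\ and independent of the arrival process $A^r(\cdot)$, a standard thinning argument combined with hypothesis $\oa^r(\cdot)\Rightarrow\alpha(\cdot)$ yields
\[
\tilde A^r(\cdot)/r\ \Rightarrow\ p\,\alpha(\cdot)\quad\text{in }\skr_{\nneg\to\nneg}.
\]
The limit being continuous and deterministic, convergence holds uniformly in probability on any compact interval $[0,\Dlt]$. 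Hence $\inf_{\dlt\le t\le\Dlt}(\tilde A^r(t)-\tilde A^r(t-b))/r\to p\alpha b$ in probability, and setting $C(\dlt):=p\alpha b/2$ yields the conclusion for every $\Dlt>\dlt$, with $C(\dlt)$ depending on $\dlt$ only through the choice of $b<\dlt$.

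The main (and really only) technical obstacle is justifying the functional law of large numbers for the thinned process $\tilde A^r(\cdot)/r$. This is routine: writing $\tilde A^r(s)/r=(A^r(s)/r)\cdot(\tilde A^r(s)/A^r(s))$ on the event $\{A^r(s)>0\}$, the first factor converges uniformly on compacts to $\alpha\,\cdot\,$ by assumption (A.3), while Kolmogorov's strong law applied to the i.i.d.\ Bernoulli$(p^r)$ random variables $\ind\{B_k^r>b\}$, combined with $p^r\to p$ and $A^r(s)\to\infty$ on $\{s>0\}$, gives that the second factor converges to $p$ uniformly on $[\eps,\Dlt]$ for any $\eps>0$; behaviour on $[0,\eps]$ is negligible since $\tilde A^r(\eps)/r\to p\alpha\eps$ is small. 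Uniform convergence on $[0,\Dlt]$ then upgrades pointwise to the uniform lower bound over $t\in[\dlt,\Dlt]$.
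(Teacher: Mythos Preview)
Your proof is correct and follows essentially the same idea as the paper: choose $b\in(0,\delta)$ a continuity point of the law of $B$ with $p:=\pr\{B>b\}>0$, and lower-bound $Q^r(t)$ by the number of customers arriving in the last $b$ time units whose service exceeds $b$. The paper carries this out by partitioning $(0,\Delta]$ into intervals of length $b/2$ and using finite-dimensional convergence of the thinned arrival counts, whereas you take the continuous-time route via uniform convergence of $\tilde A^r(\cdot)/r$; the only caveat is that your appeal to ``Kolmogorov's strong law'' is slightly imprecise for a triangular array (the Bernoulli parameters depend on $r$), but the needed uniform weak law follows readily from Kolmogorov's maximal inequality and $A^r(\Delta)/r\Rightarrow\alpha\Delta$, so the argument stands.
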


{\it Proof.}
Let us first explain the result heuristically. Consider the arrivals with long service times, i.e. exceeding a $b > 0$. During $(0, b/2]$, there are $r \alpha \pr \{ B > b \} b /2$ such arrivals to the $r$-th queue. They will leave the queue after $t = b$, and hence, in $(b/2,b]$, the scaled queue length $\oq^r(\cdot)$ is bounded from below by $\alpha \pr \{ B > b \} b /2$. Similarly, $\oq^r(\cdot) \geq \alpha \pr \{ B > b \} b /2$ in any interval $((n-1)b/2, nb/2]$, $n \in \nat$.

We now proceed more formally. Pick an $b \in (0,\dlt)$ such that $b$ is a continuity point for the distribution of $B$, and
\[
p: = \pr \{ B \geq b \} > 0.
\]
Then, as $r \to \infty$,
\[
p_r := \pr^r \{ B^r \geq b \} \to p.
\]
Partition $(0,\Dlt]$ into subintervals of length $b/2$,
\[
(0,\Dlt] \subseteq \bigcup_{1 \leq n \leq N(\Dlt)} ((n-1)b/2, nb/2].
\]
Denote by $\oa^r_n$ the scaled number of arrivals during $((n-1)b/2, nb/2]$, and by $\oa^r_n(b)$ the scaled number of arrivals during $((n-1)b/2, nb/2]$ with service times at least $b$,
\begin{align*}
\oa^r_n &:= \oa^r(nb/2) - \oa^r((n-1)b/2), \\
\oa^r_n(b) &:= \frac{1}{r} \sum_{k = A^r((n-1)b/2)+1}^{A^r(nb/2)} \ind_{\{ B_k^r \geq b \}}.
\end{align*}
By $\oa^r \Rightarrow \alpha(\cdot)$ and $p_r \to p$ as $r \to \infty$, 
\begin{align*}
(\oa_1^r, \ldots, \oa_{N(\Dlt)}^r) &\Rightarrow (\alpha b/2, \ldots, \alpha b/2),\\
(\oa_1^r(b), \ldots, \oa_{N(\Dlt)}^r(b)) &\Rightarrow (\alpha p  b/2, \ldots, \alpha p b/2).
\end{align*}

Pick a $C(\dlt) < \alpha p b/2$, then
\begin{align*}
&\ \pr^r \{  \inf_{\dlt \leq t \leq \Dlt} \oq^r(t) \geq C(\dlt) \} \\
\geq& \ \pr^r \{ \inf_{t \in ((n-1)b/2, nb/2]} \oq^r(t) \geq C(\dlt), \ n = 2, \ldots, N(\Dlt)\} \\
 \geq& \ \pr^r \{ \oa^r_n(b) \geq C(\dlt), \ n = 1, \ldots, N(\Dlt) - 1\} \to 1 \quad \text{as $r \to \infty$}. \tag*{\qed}
\end{align*}

We can now prove easily that all fluid limits are bounded away from zero outside $t = 0$.
\begin{lemma} \label{lem:fluid_limit_pos}
For any $\dlt > 0$, there exists a $C(\dlt) > 0$ such that, for any fluid limit $(\clz,Z)(\cdot)$,
\[
\text{a.s.} \quad \inf_{t \geq \dlt} \min_{1 \leq i \leq I} Z_i(t) \geq C(\dlt).
\]
\end{lemma}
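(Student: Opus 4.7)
The plan is to exploit the rate constraints to dominate each coordinate $Z_i^r(\cdot)$ pathwise from below by the length of an infinite-server queue, to invoke Lemma~\ref{lem:inf_server_queue} for that queue, and to transfer the estimate to the fluid limit via the continuous mapping theorem.

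Fix a route $i$. The rate constraint $S_i(\oz^r, U_{ik}^r, t) \le m_i(t - U_{ik}^r)$ implies that flow $k$ on route $i$ is still present at time $t \ge U_{ik}^r$ whenever $B_{ik}^r/m_i \wedge D_{ik}^r > t - U_{ik}^r$, so
\[
Z_i^r(t) \ge Q_i^r(t) := \sum_{k=1}^{E_i^r(t)} \ind_{\{B_{ik}^r/m_i \, \wedge \, D_{ik}^r \, > \, t - U_{ik}^r\}},
\]
and $Q_i^r(\cdot)$ is exactly the length of an infinite-server queue, started empty, with arrival counting process $E_i^r(\cdot)$ and i.i.d.\ service times $B_{ik}^r/m_i \wedge D_{ik}^r$. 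Assumption (A.3) gives $\ove_i^r(\cdot) \Rightarrow \eta_i(\cdot)$ with $\eta_i > 0$, and (A.4) gives $B_i^r/m_i \wedge D_i^r \Rightarrow B_i/m_i \wedge D_i$, a limit a.s.\ strictly positive because $B_i, D_i \in \pos$ a.s.; the hypotheses of Lemma~\ref{lem:inf_server_queue} are thus met, and it yields a constant $C_i(\dlt) > 0$ such that, for every $\Dlt > \dlt$,
\[
\pr^r \bigl\{ \inf_{\dlt \le t \le \Dlt} Q_i^r(t)/r \ge C_i(\dlt) \bigr\} \to 1 \quad \text{as } r \to \infty.
\]
Set $C(\dlt) := \min_{1 \le i \le I} C_i(\dlt) > 0$.

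Finally, let $(\clz, Z)(\cdot)$ be a fluid limit along some $\clq \subseteq \clr$, so $\oz^q \Rightarrow Z$ in $\skr_{\nneg \to \nneg^I}$. The tightness established above is in fact $\mathbf{C}$-tightness, so $Z(\cdot)$ is a.s.\ continuous; the functional $z(\cdot) \mapsto \inf_{\dlt \le t \le \Dlt} z_i(t)$ is therefore $J_1$-continuous at the limit, the continuous mapping theorem yields $\inf_{[\dlt,\Dlt]} \oz_i^q \Rightarrow \inf_{[\dlt,\Dlt]} Z_i$ in $\real$, and combining the Portmanteau theorem on the open set $(-\infty, C')$ with the pathwise bound $Z_i^q \ge Q_i^q$ gives, for every $C' < C(\dlt)$,
\[
\pr\bigl\{ \inf_{\dlt \le t \le \Dlt} Z_i(t) < C' \bigr\} \le \liminf_{q \to \infty} \pr^q \bigl\{ \inf_{\dlt \le t \le \Dlt} \oz_i^q(t) < C' \bigr\} \le \liminf_{q \to \infty} \pr^q \bigl\{ \inf_{\dlt \le t \le \Dlt} Q_i^q(t)/q < C(\dlt) \bigr\} = 0.
\]
Letting $C' \uparrow C(\dlt)$ through rationals gives $\inf_{[\dlt,\Dlt]} Z_i \ge C(\dlt)$ a.s.; intersecting these a.s.\ events over $\Dlt \in \nat$ extends the bound to all of $[\dlt, \infty)$, and taking the minimum over $i$ finishes the proof. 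The only step that genuinely needs attention is this last one, since it relies on $\mathbf{C}$-tightness (rather than merely on $J_1$-tightness) to ensure continuity of the infimum functional at the limit; everything else is an immediate consequence of the rate constraints and Lemma~\ref{lem:inf_server_queue}.
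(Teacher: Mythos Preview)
Your proof is correct and follows essentially the same route as the paper: both dominate $Z_i^r$ below by the infinite-server queue with service times $B_{ik}^r/m_i \wedge D_{ik}^r$, invoke Lemma~\ref{lem:inf_server_queue}, and pass to the fluid limit via the continuous mapping theorem (using that the infimum functional is $J_1$-continuous at continuous paths) together with Portmanteau. The only cosmetic difference is that the paper applies Portmanteau directly to the closed set $[C(\dlt),\infty)$, whereas you apply it to the open set $(-\infty,C')$ and then let $C'\uparrow C(\dlt)$; these are equivalent.
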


\begin{proof}
Consider a flow $k$ on route $i$ in the $r$-th network. By the rate constraints, this flow will stay in the network at least for $B_{ik}^r/m_i \wedge D_{ik}^r$ since its arrival. Hence, the route $i$ population process $Z_i^r(\cdot)$ is bounded from below by the length $Q_i^r(\cdot)$ of the infinite server queue with arrivals $E_i^r(\cdot)$ and i.i.d. service times $\{ B_{ik}^r / m_i \wedge D_{ik}^r \}_{k \in \nat}$. Assume that $Q_i^r(0) = 0$ and put $\oq_i^r(\cdot) = Q_i^r(\cdot) / r$. Then, by Lemma~\ref{lem:inf_server_queue}, for any $\dlt > 0$ there exists a $C(\dlt) > 0$ such that, for any $\Dlt > \dlt$,
\[
\pr^r \{ \inf_{t \in [\dlt, \Dlt]} \min_{1 \leq i \leq I} \oz_i^r(t) \geq C(\dlt) \} \geq \pr^r \{ \inf_{t \in [\dlt, \Dlt]} \min_{1 \leq i \leq I} \oq_i^r(t) \geq C(\dlt) \} \to 1.
\]
Now consider a fluid limit $(\clz,Z)(\cdot)$ along a subsequence $\{ (\cloz^q, \oz^q)(\cdot) \}_{q \in \clq}$. For any compact set $K \subset \nneg$, the mapping $\varphi_K : \skr_{\nneg \to \real} \to \real$, $\varphi_K(x) := \inf_{t \in K} \min_{q \leq i \leq I} x(t)$ is continuous at continuous $x(\cdot)$. Hence, $\varphi_{[\dlt, \Dlt]} (\oz^q) \Rightarrow \varphi_{[\dlt, \Dlt]} (Z)$ and, by the Portmanteau theorem,
\[
\pr \{ \varphi_{[\dlt, \Dlt]} (Z) \geq C(\dlt) \} \geq \limsup_{q \to \infty} \pr^q \{ \varphi_{[\dlt, \Dlt]} (\oz^q) \geq C(\dlt) \}  = 1,
\]
where $\Dlt > \dlt$ is arbitrary. Then the lemma follows.

Note also that the constant $C(\dlt)$ does not depend on a particular fluid limit $(\clz,Z)(\cdot)$.
\end{proof}

\subsection{Fluid limits as fluid model solutions} \label{sec:fluid_limits_fms} Here we show that fluid limits a.s. satisfy the fluid model equation~\eqref{eq:mvfms}.

Let $(\clz,Z)(\cdot)$ be a fluid limit along a subsequence $\{ (\cloz^q,\oz^q)(\cdot) \}_{q \in \clq}$. Lemma~\ref{lem:as_reg_zr} implies that ({\it cf.}~the~proof of~\cite[Lemma~6.2]{GRZ08})
\begin{equation} \label{eq25}
\text{a.s.} \quad \clz_i(t)(\partial_A) = 0 \quad \text{for all $t \geq 0$, all $i$ and $A \in \clc$},
\end{equation}
where $\partial_A$ denotes the boundary of $A$. Then, when proving~\eqref{eq:mvfms} for $(\clz,Z)(\cdot)$, it suffices to consider sets $A$ from
\[
\clc^+ := \{ [x,\infty) \times [y,\infty) \colon x \wedge y > 0 \}.
\]
It also suffices to consider $t$ from a finite interval $[0,T]$.

The rest of the proof splits into two parts. First we derive dynamic equations for the prelimiting processes $(\cloz^q,\oz^q)(\cdot)$, and then show that these equations converge to \eqref{eq:mvfms}.

\paragraph{Prelimiting equations} Fix $q \in \clq$, $i$, $t \leq T$ and $A \in \clc^+$. Fix also an outcome $\omega \in \Omega^q$. In what follows up to equation~\eqref{eq24}, all random elements are evaluated at this $\omega$. We have
\begin{equation} \label{eq22}
\begin{split}
\cloz_i^q(t)(A) =& \cloz_i^q(0)(A + (S_i(\oz^q,0,t),t)) \\
&+ \underbrace{\frac{1}{q} \sum_{k = 1}^{E_i^q(t)} \overbrace{\ind_A (B_{ik}^q - S_i(\oz^q,U_{ik}^q,t), 
D_{ik}^q - (t - U_{ik}^q))}^{\displaystyle{=: s_k}}}_{\displaystyle{ =: \Sgm}}.
\end{split}
\end{equation}
Fix a partition partition $0 < t_0 < t_1  < ... < t_N = t$, then
\[
\Sgm = \frac{1}{q}\sum_{k = 1}^{E_i^q(t_0)} s_k + \frac{1}{q}\sum_{j = 0}^{N-1} \sum_{k = E_i^q(t_j) + 1}^{E_i^q(t_{j+1})} s_k.
\]
Suppose that a function $y(\cdot)$ is non-increasing in $[t_0,t]$ and that, for some $\dlt$,
\[
\sup_{s \in [t_0,t]} | S_i(\oz^q, s, t) - y(s) | \leq \dlt.
\]
Now we can estimate $\Sgm$. If $U_{i k}^q \in (t_j, t_{j+1}]$, then
\begin{gather*}
B_{i k}^q - (y(t_j) + \delta) \leq B_{i k}^q - S(\oz^q, U_{i k}^q, t) \leq B_{i k}^q - (y(t_{j+1}) - \delta),\\
D_{i k}^q - (t - t_j)) \leq D_{i k}^q - (t - U_{i k}^q) \leq D_{i k}^q - (t - t_{j+1}),
\end{gather*}
and
\begin{align*}
\Sigma &\geq \sum_{j=0}^{N-1} \dfrac{1}{q} \sum_{k= E_i^q(t_j)+1}^{E_i^q(t_{j+1})} \ind_A (B_{i k}^q - (y(t_j) + \delta), D_{i k}^q - (t - t_j)), \\
\Sigma &\leq \ove_i^q(t_0) + \sum_{j=0}^{N-1} \dfrac{1}{q} 
\sum_{k= E_i^q(t_j)+1}^{E_i^q(t_{j+1})} \ind_A (B_{i k}^q - (y(t_{j+1}) - \delta), D_{i k}^q - (t - t_{j+1})),
\end{align*}
which can be rewritten as
\begin{equation} \label{eq23}
\begin{split}
\Sigma &\geq \sum_{j=0}^{N-1} \clol_i^q(t_j,t_{j+1})(A + (y(t_j) + \delta,t - t_j)) \\
\Sigma &\leq \ove_i^q(t_0) + \sum_{j=0}^{N-1} \clol_i^q(t_j,t_{j+1})(A+(y(t_{j+1}) - \delta,t - t_{j+1})).
\end{split}
\end{equation}
Put
\[
X^q := \sup_{A \in \clc} \sup_{0 \leq s \leq t \leq T} \| (\clol^q(s,t)(A) -  (t - s) \eta \ast \theta^q(A) \|,
\]
then, by~\eqref{eq23} and~\eqref{eq22},
\begin{equation} \label{eq24}
\begin{split}
&\sum_{j=0}^{N-1} \Bigl( \eta_i (t_{j+1} - t_j) \theta_i^q(A + (y(t_j) + \delta, t - t_j)) - X^q \Bigr)  \\
\leq \ &\cloz_i^q(t)(A) - \cloz_i^q(0)(A + (S_i(\oz^q,0,t),t))  \\
\leq \ &\eta_i t_0 + X^q + \sum_{j=0}^{N-1} \Bigl( \eta_i (t_{j+1} - t_j) \theta_i^q(A+(y(t_{j+1}) - \delta, t - t_{j+1}) + X^q \Bigr).
\end{split}
\end{equation}
To summarize, we have shown that, for all $q \in \clq$ and $\omega \in \Omega^q$,
\begin{equation} \label{eq26}
(\cloz^q(\cdot),X^q) \in \cla^q,
\end{equation}
where $\cla^q \subset \skr_{\nneg \to \msr^I} \times \nneg$ is the set of pairs $(\zeta(\cdot),x)$ such that, for any set $A \in \clc^+$, any partition $0 < t_0 < t_1 < \ldots < t_N = t \leq T$ and any function $y(\cdot)$ that is non-increasing in $[t_0,t]$ and that satisfies $\sup_{s \in [t_0,t]} |S_i(\la 1, \zeta \ra, s, t) - y(s)| \leq \dlt$ for some $i$ and $\dlt$,
\begin{equation*}
\begin{split}
&\sum_{j=0}^{N-1} \Bigl( \eta_i (t_{j+1} - t_j) \theta_i^q(A + (y(t_j) + \delta, t - t_j)) - x \Bigr)  \\
\leq \ &\zeta(t)(A) - \zeta_i(0)(A + (S_i(\la 1, \zeta \ra ,0,t),t))  \\
\leq \ &\eta_i t_0 + x + \sum_{j=0}^{N-1} \Bigl( \eta_i (t_{j+1} - t_j) \theta_i^q(A+(y(t_{j+1}) - \delta, t - t_{j+1}) + x \Bigr).
\end{split}
\end{equation*}

\paragraph{Limiting equations}
By \textup{(A.3) {\it and} (A.4)} ({\it cf.} the proof of~\cite[Lemma~5.1]{GRZ08}),
\[
X_q \Rightarrow 0 \quad \text{as } q \to \infty.
\]
Since the limit of $X_q$ is deterministic, then the joint convergence $(\cloz^q(\cdot), X^q) \Rightarrow (\clz(\cdot),0)$ holds. By the Skorokhod representation theorem, there exist random elements $\{ \tilde{\clz}^q(\cdot) \}_{q \in \clq}$, $\tilde{\clz}(\cdot)$ and $\{ \tilde{X}^q \}_{q \in \clq}$ defined on a common probability space $(\tilde{\Omega}, \tilde{\mathcal{F}}, \tilde{\pr})$ such that $(\tilde{\clz}^q(\cdot), \tilde{X}^q) \stackrel{\text{d}}{=} (\cloz^q(\cdot), X^q)$, $q \in \clq$, and $\tilde{\clz}(\cdot) \stackrel{\text{d}}{=} \clz(\cdot)$, and
\begin{equation} \label{eq27}
\text{a.s.} \quad (\tilde{\clz}^q(\cdot),\tilde{X}^q) \to (\tilde{\clz}(\cdot),0) \quad \text{as $q \to \infty$}.
\end{equation}
Introduce also the total mass processes $\tilde{Z}^q(\cdot) := \la 1, \tilde{\clz}^q(\cdot) \ra$, $q \in \clq$, and $\tilde{Z}(\cdot) := \la 1, \tilde{\clz}(\cdot) \ra$. By~Lemma~\ref{lem:fluid_limit_pos}, \eqref{eq25} and \eqref{eq26},
\begin{subequations} \label{eq:as_fluid_limit}
\begin{align}
\text{a.s.} \quad \tilde{Z}_i(t) > 0 \quad &\text{for all $t>0$ and all $i$},  \label{eq:as_fluid_limit_a} \\
\text{a.s.} \quad \tilde{\clz}_i(t)(\partial_A) = 0 \quad &\text{for all $t \geq 0$, all $i$ and $A \in \clc$}, \label{eq:as_fluid_limit_b}\\
\text{a.s.} \quad (\tilde{\clz}^q(\cdot),\tilde{X}^q) \in \cla^q \quad &\text{for all $q \in \clq$}. \label{eq:as_fluid_limit_c}
\end{align}
\end{subequations}
Denote by $\tilde{\Omega}_\ast$ the set of outcomes $w \in \tilde{\Omega}$ for which \eqref{eq27} and \eqref{eq:as_fluid_limit} hold. We will show that, for all $\omega \in \tilde{\Omega}_\ast$, all $i$, $t \in [0,T]$ and $A \in \clc^+$,
\begin{equation} \label{eq33}
\begin{split}
\tilde{\clz}_i(t) (A) =& \tilde{\clz}_i(0) (A + (S_i(\tilde{Z},0,t), t)) \\
&+ \eta_i \int_0^t \theta_i(A + (S_i(\tilde{Z},s,t), t - s)) ds,
\end{split}
\end{equation}
and that will complete the proof of Theorem~\ref{th:fluid_limits}.

Fix $t \in [0,T]$, $i$ and $A \in \clc^+$. Also fix an outcome $\omega \in \tilde{\Omega_\ast}$. All random elements in the rest of the proof are evaluated at this $\omega$.

By~\eqref{eq27} and  \eqref{eq:as_fluid_limit_b},
\begin{equation} \label{eq30}
\tilde{\clz}_i^q(t)(A) \to \tilde{\clz}_i(t)(A) \quad \text{as $q \to \infty$}.
\end{equation}
By~\eqref{eq:as_fluid_limit_a}, the rate constraints and the dominated convergence theorem,
\begin{equation} \label{eq28}
S_i(\tilde{Z}^q, s,t) \to S_i(\tilde{Z}, s,t) \quad \text{for all $s \in [0,t]$} \quad \text{as $q \to \infty$}, 
\end{equation}
which in particular implies that
\begin{equation} \label{eq31}
\tilde{\clz}_i^q(0)(A + (S_i(\tilde{Z}^q,0,t),t)) \to \tilde{\clz}_i(0)(A + (S_i(\tilde{Z},0,t),t))  \quad \text{as $q \to \infty$}.
\end{equation}
Fix $t_0 \in (0,t)$ and $\dlt>0$. By~\eqref{eq:as_fluid_limit_a}, the function $S_i(\tilde{Z},\cdot,t)$ is continuous in $[t_0,t]$, and the functions $S_i(\tilde{Z}^q,\cdot,t)$ are monotone. Then the point-wise convergence~\eqref{eq28} implies uniform convergence in~$[t_0,t]$, and for $q$ large enough,
\begin{equation} \label{eq34}
\sup_{s \in [t_0,t]} |S_i(\tilde{Z}^q, s,t) - S_i(\tilde{Z}, s,t)| \leq \dlt.
\end{equation}
Now fix a partition $t_0<t_1<\ldots<t_N = t$. The bound~\eqref{eq34} and~\eqref{eq:as_fluid_limit_c} imply that (in the definition of $\cla^q$ we take $y(\cdot) = S_i(\tilde{Z}, \cdot,t)$)
\begin{equation}\label{eq29}
\begin{split}
&\sum_{j=0}^{N-1} \Bigl( \eta_i (t_{j+1} - t_j) \theta_i^q(A + (S_i(\tilde{Z}, t_j,t) + \delta, t - t_j)) - \tilde{X}^q \Bigr) \\
\leq \ &\tilde{\clz}_i^q(t)(A) - \tilde{\clz}_i^q(A + (S_i(\tilde{Z}^q, 0,t),t))  \\
\leq \ &\eta_i t_0 + \tilde{X}^q + \sum_{j=0}^{N-1} \Bigl( \eta_i (t_{j+1} - t_j) \theta_i^q(A+(S_i(\tilde{Z}, t_{j+1},t) - \delta, t - t_{j+1}) + \tilde{X}^q \Bigr). 
\end{split}
\end{equation}
Since $\theta_i(\cdot \times \nneg)$ and $\theta_i(\nneg \times \cdot)$ are probability measures, the set of $B \in \clc$ for which $\theta_i( \partial_B) > 0$ is at most countable. By~\eqref{eq:as_fluid_limit}, $S_i(\tilde{Z}, \cdot,t)$ is strictly monotone in $[t_0,t]$. Hence, the set $\cld$ of $s \in [t_0,t]$ for which
$\theta_i(\partial_{A + (S_i(\tilde{Z}, s,t) + \delta, t - s)})> 0$ or $\theta_i(\partial_{A+(S_i(\tilde{Z}, s,t) - \delta, t - s)}) > 0$ is at most countable, too. In~\eqref{eq29}, let  $q \to \infty$ assuming that the partition contains no points from $\mathcal{D}$. Then, by~\eqref{eq27}, \eqref{eq30} and~\eqref{eq31},
\begin{equation} \label{eq32}
\begin{split}
&\sum_{j=0}^{N-1} \eta_i (t_{j+1} - t_j) \theta_i(A + (S_i(\tilde{Z}, t_j,t) + \delta, t - t_j)) \\
\leq \ &\tilde{\clz}_i(t)(A) - \tilde{\clz}_i(0)(A + (S_i(\tilde{Z}, 0,t),t))  \\
\leq \ &\eta_i t_0 + \sum_{j=0}^{N-1} \eta_i (t_{j+1} - t_j) \theta_i(A+(S_i(\tilde{Z}, t_{j+1},t) - \delta, t - t_{j+1}). 
\end{split}
\end{equation}
Now, in~\eqref{eq32}, let the diameter of the partition go to $0$ keeping $t_0$ fixed. Then
\begin{equation*}
\begin{split}
& \eta_i \int_{t_0}^t \theta_i(A + (S_i(\tilde{Z}, s,t) + \delta, t - s)) ds \\
\leq \ &\tilde{\clz}_i(t)(A) - \tilde{\clz}_i(0)(A + (S_i(\tilde{Z}, 0,t),t))  \\
\leq \ &\eta_i t_0 + \eta_i \int_{t_0}^t \theta_i(A+(S_i(\tilde{Z}, s,t) - \delta, t - s) ds. 
\end{split}
\end{equation*}
Finally, in the last inequality, let $\dlt \to 0$ (recall~\eqref{eq:as_fluid_limit_b}) and $t_0 \to 0$, then \eqref{eq33} follows.

\section{Proof of Theorem~\ref{th:stationary_distributions}} \label{sec:proof_stat_distributions}

By the discussion following Theorem~\ref{th:stationary_distributions} and Lemma~\ref{lem:stat_no_atoms}, it is left to show tightness of the scaled stationary distributions. It suffices to show coordinate-wise tightness, so fix $i$. By~\cite[Theorem~2.1]{Jakubowski} and \cite[Theorem~15.7.5]{Kallenberg}, the  sequence $\{ \cloy_i^r, \oy_i^r \}_{r \in \clr}$ is tight if
\begin{subequations}
\begin{gather}
\sup_{r \in \clr} \ex^r \oy_i^r < \infty, \label{eq:stat_tight_a}\\
\lim_{n \to \infty} \ex^r \cloy_i^r(V_n^\infty) = 0, \label{eq:stat_tight_b}\\
\lim_{n \to \infty} \ex^r \cloy_i^r(H_n^\infty) = 0, \label{eq:stat_tight_c}
\end{gather}
\end{subequations}
where $V_n^\infty = [n,\infty) \times \nneg$ and $H_n^\infty = \nneg \times [n,\infty)$.

First check \eqref{eq:stat_tight_a}. For each $r$, the route~$i$ population process $Z_i^r(\cdot)$ is bounded from above by the length $Q_i^r(\cdot)$ of the $M/G/\infty$ queue with the following parameters:
\begin{itemize}
\item[(Q.1)] $\phantom{t}$ at $t = 0$, there are $Z_i^r(0)$ customers whose service times are patience times of the initial $\phantom{tt}$ flows on  route~$i$ of the $r$-th network;
\item[(Q.2)] $\phantom{t}$ the input process is the route~$i$ input process of the $r$-th network;
\item[(Q.3)] $\phantom{t}$ service times of newly arriving customers are patience times of newly arriving flows $\phantom{tt}$~on~route $i$ of the $r$-th network.
\end{itemize}
\vspace{2pt}
For all $r$ and $t$, $Z_i^r(t) \leq Q_i^r(t)$. As $t \to \infty$, $Z_i^r(t) \Rightarrow Y_i^r$ and $Q_i^r(t) \Rightarrow \Pi(\eta_i^r \ex^r D_i^r)$. Hence, $Y_i^r \leq_{\text{st}} \Pi(\eta_i^r \ex^r D_i^r)$ and $\ex^r \oy_i^r \leq \eta_i^r \ex^r D_i^r / r \to \eta_i \ex D_i$ as $r \to \infty$, which implies \eqref{eq:stat_tight_a}.

Now check~\eqref{eq:stat_tight_b}. Note that, if at some point the residual flow size is at least $n$, then the initial flow size was at least $n$, too. Hence, $\clz_i^r(\cdot)(V_n^\infty)$ is bounded from above by the length $Q_i^{r,n}(\cdot)$ of the $M/G/\infty$ queue whose initial state is as in (Q.1), newly arriving customers are newly arriving flows on route~$i$ of the $r$-th network with initial sizes at least $n$, and service times of newly arriving customers are patience times of the corresponding flows. In particular, the input process for this queue is Poisson with intensity $\eta_i^r \pr^r\{ B_i^r \geq n \}$. 

Let $f_n(\cdot)$ be a continuous function on $\nneg^2$ such that
\[
\ind_{V_{n+1}^\infty}(\cdot) \leq f_n(\cdot) \leq \ind_{V_n^\infty}(\cdot).
\]
Then, for all $r$ and $t$,
\[
\la f_n, \clz_i^r(t) \ra \leq \clz_i^r(t)(V_n^\infty) \leq Q_i^{r,n}(t)
\]
Letting $t \to \infty$, we obtain
\begin{gather*}
\cly_i^r(V_{n+1}^\infty) \leq \la f_n, \cly_i^r \ra \leq_{\text{st}} \Pi(\eta_i^r \pr^r\{ B_i^r \geq n \} \ex^r D_i^r), \\
\ex^r \cloy_i^r(V_{n+1}^\infty) \leq \eta_i^r \pr^r\{ B_i^r \geq n \} \ex^r D_i^r/ r,
\end{gather*}
and then~\eqref{eq:stat_tight_b} follows.

Finally,~\eqref{eq:stat_tight_c} is valid due to the following lemma.

\begin{lemma} \label{lem2}
For any $r \in \clr$, $i$ and Borel set $S \subseteq \nneg$,
\[
\cly_i^r(\nneg \times S) \leq_{\text{st}} \Pi(\eta_i^r \, \ex^r \! D_i^r \, \pr^r \! \{ \tilde{D}_i^r \in S \}),
\]
where $\tilde{D}_i^r$ has density $\pr^r\{ D_i^r > x \} / \ex^r D_i^r$, $x \geq 0$.
\end{lemma}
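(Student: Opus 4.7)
The plan is to run the $r$-th network from the stationary initial condition $\clz_i^r(0) \stackrel{\text{d}}{=} \cly_i^r$, so that $\clz_i^r(t) \stackrel{\text{d}}{=} \cly_i^r$ for every $t \geq 0$, decompose $\clz_i^r(t) = \clzinit_i(t) + \clznew_i(t)$ as in Section~\ref{sec:stochastic_model}, dominate the newly arrived part by a coupled $M/G/\infty$ queue, and then let $t \to \infty$ so that the contribution of the initial flows fades away while the $M/G/\infty$ bound approaches the desired Poisson law.

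For the newly arrived part, I would set up a pathwise coupling with the $M/G/\infty$ queue driven by the same route~$i$ Poisson arrival process $E_i^r(\cdot)$, with i.i.d.\ service times $D_{ik}^r$ (the patience times), started empty at zero. A flow arriving at $U_{ik}^r \leq t$ is still in the network at time $t$ only if $D_{ik}^r > t - U_{ik}^r$, and in that case its residual patience equals $D_{ik}^r - (t - U_{ik}^r)$, which is precisely the residual service time of customer~$k$ in the coupled queue. Hence $\clznew_i(t)(\nneg \times S) \leq Q_i^{r,S}(t)$ almost surely, where $Q_i^{r,S}(t)$ counts the $M/G/\infty$ customers at time~$t$ whose residual service lies in~$S$. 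By Poisson marking, $Q_i^{r,S}(t)$ is Poisson-distributed with mean $\eta_i^r \int_0^t \pr^r\{D_i^r > s,\, D_i^r - s \in S\}\, ds$, and a short Fubini calculation identifies the limit:
\[
\eta_i^r \int_0^\infty \pr^r\{D_i^r > s,\, D_i^r - s \in S\}\, ds \;=\; \eta_i^r\, \ex^r\! D_i^r\, \pr^r\{\tilde{D}_i^r \in S\}.
\]
In particular $Q_i^{r,S}(t) \leq_{\text{st}} \Pi(\eta_i^r\, \ex^r\! D_i^r\, \pr^r\{\tilde{D}_i^r \in S\})$ for every $t \geq 0$.

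For the initial part, an initial flow present at time~$t$ must have had residual patience greater than~$t$ at time~$0$, so
\[
\clzinit_i(t)(\nneg \times S) \leq \clz_i^r(0)(\nneg \times [t, \infty)) = \cly_i^r(\nneg \times [t, \infty)),
\]
and since $\cly_i^r$ is a.s.\ a finite measure (by the already-verified bound $Y_i^r \leq_{\text{st}} \Pi(\eta_i^r\, \ex^r\! D_i^r)$), the right-hand side tends to $0$ almost surely as $t \to \infty$. Combining the two estimates, for every integer $n \geq 1$ and every $t \geq 0$,
\[
\pr^r\{\cly_i^r(\nneg \times S) \geq n\} = \pr^r\{\clz_i^r(t)(\nneg \times S) \geq n\} \leq \pr^r\{\clzinit_i(t)(\nneg \times S) \geq 1\} + \pr^r\{Q_i^{r,S}(t) \geq n\},
\]
using that $A + B \geq n$ with $A, B \in \znneg$ forces $A \geq 1$ or $B \geq n$; letting $t \to \infty$ yields the claim. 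The step that requires the most care --- and the reason I begin in the stationary regime rather than from the empty state --- is ensuring that the initial mass $\clzinit_i(t)(\nneg \times S)$ washes out: stationarity furnishes both the distributional identity $\clz_i^r(t) \stackrel{\text{d}}{=} \cly_i^r$ and the a.s.\ finiteness of $\cly_i^r$ that underlie this step, while everything else reduces to the standard $M/G/\infty$ marking computation.
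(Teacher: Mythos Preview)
Your proof is correct and takes a genuinely different route from the paper's. The paper argues via the \emph{limiting} interpretation of $\cly_i^r$: it couples the full process $\clz_i^r(\cdot)$ (initial customers included) with the $M/G/\infty$ queue $Q_i^r(\cdot)$ of parameters (Q.1)--(Q.3), invokes Tak\'acs's result on the limiting joint law of residual service times in $M/G/\infty$ to get $Q_i^r(t)(S^\dlt) \Rightarrow \Pi(\eta_i^r\,\ex^r D_i^r\,\pr^r\{\tilde D_i^r\in S^\dlt\})$, and then passes to the limit $t\to\infty$ through a continuous sandwich function $g_\dlt$ with $\ind_{\nneg\times S}\le g_\dlt\le\ind_{\nneg\times S^\dlt}$; this is why the paper first proves the bound with $S^\dlt$ in place of $S$ and only then lets $\dlt\downarrow 0$. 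Your argument instead exploits stationarity directly: starting from $\clz_i^r(0)\stackrel{\text d}{=}\cly_i^r$ gives the exact identity $\clz_i^r(t)(\nneg\times S)\stackrel{\text d}{=}\cly_i^r(\nneg\times S)$ for every $t$, so no weak-convergence approximation is needed; the new part is dominated by an $M/G/\infty$ queue \emph{started empty}, for which Poisson marking yields an exact Poisson law at each finite $t$ with mean monotone in $t$; and the initial part is killed by the elementary set inclusion $\{A+B\ge n\}\subset\{A\ge 1\}\cup\{B\ge n\}$ for nonnegative integers together with $\cly_i^r(\nneg\times[t,\infty))\to 0$ a.s. The upshot is that your approach dispenses with both the $S^\dlt$ enlargement and the appeal to Tak\'acs, at the modest cost of working with the init/new decomposition; the paper's approach is more classical but technically heavier here.
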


{\it Proof.}
Fix $r \in \clr$, $i$ and a Borel set $S \subseteq \nneg$. It suffices to show that, for any $\dlt>0$,
\[
\cly_i^r(\nneg \times S) \leq_{\text{st}} \Pi(\eta_i^r \ex^r D_i^r \pr^r \{ \tilde{D}_i^r \in S^\dlt \}),
\]
so fix $\dlt > 0$. 

Consider the upper bound queue $Q_i^r(\cdot)$ with parameters (Q.1)--(Q.3). Denote by $Q_i^r(t)(S^\dlt)$ the~number of customers in this queue whose residual service times at time $t$ are in $S^\dlt$. Then
\[
\clz_i^r(\cdot)(\nneg \times S^\dlt) \leq Q_i^r(\cdot)(S^\dlt).
\]
Given at time $t$ there are $k$ customers in the queue, denote by $D_1(t), \ldots, D_k(t)$ their residual service times. By~\cite[Chapter~3.2, Theorem~2]{Takacs}, 
\[
\lim_{t \to \infty} \pr^r \{ D_1(t) \leq x_1, \ldots, D_k(t) \leq x_k | Q_i^r(t) = k \} = \pr^r\{ \tilde{D}_i^r \leq x_1 \} \ldots \pr^r\{ \tilde{D}_i^r \leq x_k \},
\]
which together with $Q_i^r(t) \Rightarrow \Pi(\eta_i^r \ex^r D_i^r)$ as $t \to \infty$ implies that
\[
Q_i^r(t)(S^\dlt) \Rightarrow \Pi(\eta_i^r \ex^r D_i^r \pr^r \{ \tilde{D}_i^r \in S^\dlt \}).
\]
Let $g_\dlt$ be a continuous function on $\nneg^2$ such that 
\[
\ind_{\nneg \times S}(\cdot) \leq g_\dlt(\cdot) \leq \ind_{\nneg \times S^\dlt}(\cdot).
\]
Then, for any $t$,
\[ 
\la g^\dlt,  \clz_i^r(t) \ra \leq \clz_i^r(t)(\nneg \times S^\delta) \leq Q_i^r(t)(S^\delta),
\]
and as $t \to \infty$,
\begin{align*}
\cly_i^r(\real_+ \times S) \leq \la g^\dlt,  \cly_i^r \ra \leq_{\text{st}} \Pi (\eta_i^r \ex^r D_i^r \pr^r \{ \tilde{D}_i^r \in S^\dlt \}). \tag*{\qed}
\end{align*}

\section*{Appendix}
\appendix

{\it Proof of Lemma~\ref{lem:Lmb_continuous}.}
It suffices to show that, for a~vector $z \in \nneg^I$ with the first $I' < I$ coordinates positive and the rest of them zero, and a~sequence $\{ z^k\}_{k \in \nat} \subset \pos^I$ such that $z^k \to z$, we have $\Lmb(z^k) \to \Lmb(z)$.

Suppose that $z^k \to z$ but $\Lmb(z^k) \not \to \Lmb(z)$. Since $\{ \Lmb(z^k) \}_{k \in \nat}$ is a~subset of the compact set $\{\Lmb \in \nneg^I \colon \|\Lmb\| \leq \|C\| \}$, without loss of generality we may assume that $\Lmb(z^k) \to \Lmb' \not = \Lmb(z)$.

Recall that $\Lmb(z)$ is the unique optimal solution to
\begin{equation} \label{eq:def_Lmb}
\text{maximize} \quad \sum_{i=1}^I z_i \, \mathcal{U}_i( \Lmb_i / z_i ) \quad \text{subject to} \quad A\Lmb \leq C, \quad \Lmb \leq m \ast z,
\end{equation}
where, by convention, $\Lmb_i / 0 := 0$ and $0 \times (-\infty) := 0$.

For all $k$, $A \Lmb(z^k) \leq C$ and $\Lmb(z^k) \leq m \cdot z^k$. Hence, $\Lmb'$ is feasible for~\eqref{eq:def_Lmb} and $\Lmb'_i = 0 = \Lmb_i(z)$ for $i > I'$. Since $\Lmb' \neq \Lmb(z)$ is not optimal for~\eqref{eq:def_Lmb},
\begin{equation}\label{eq:contradiction}
l:=\sum_{i =1 }^{I'}  z_i \, \mathcal{U}_i( \Lmb_i (z) / z_i ) > \sum_{i = 1}^{I'} z_i \, \mathcal{U}_i( \Lmb'_i / z_i ) =: r.
\end{equation}
Now we construct a~sequence $\Lmb^k \to \Lmb(z)$ such that $\Lmb^k$  is feasible for the optimization problem~\eqref{eq:def_Lmb} with $z^k$ in place of $z$. Introduce vectors $C^k \in \nneg^J$ with $C_j^k = \sum_{i = I'+1}^I A_{j i} \Lmb_i(z^k)$. Put the first $I'$ coordinates of $\Lmb^k$ to be $\Lmb_i^k =  ( \Lmb_i (z) - \| C^k \| )^+ \wedge m_i z_i^k$, and the rest of them $\Lmb_i^k = \Lmb_i(z^k)$. That is, in the bandwidth allocation $\Lmb(z)$, the bandwidth $C^k$, which is required for the last $I - I'$ routes, is taken away from the first $I'$ routes.

Since $z^k \to z$, $\Lmb^k \to \Lmb(z)$ and $\Lmb(z^k) \to \Lmb'$,
\[
\sum_{i  = 1}^{I'} z_i^k \, \mathcal{U}_i( \Lmb_i^k / z_i^k ) \to l \quad \text{and} \quad \sum_{i = 1}^{I'} z_i^k \, \mathcal{U}_i( \Lmb_i (z^k) / z_i^k ) \to r.
\]
Also, for all $k$,
\[
\sum_{i =I'+1}^I z_i^k \, \mathcal{U}_i( \Lmb_i^k / z_i^k ) = \sum_{i = I'+1}^I z_i^k \, \mathcal{U}_i( \Lmb_i (z^k) / z_i^k ).
\]
Then, by~\eqref{eq:contradiction}, for $k$ big enough, 
\[
\sum_{i =1}^I z_i^k \, \mathcal{U}_i( \Lmb_i^k / z_i^k ) > \sum_{i = 1}^I z_i^k \, \mathcal{U}_i( \Lmb_i (z^k) / z_i^k ),
\]
which contradicts to $\Lmb(z^k)$ being optimal for~\eqref{eq:def_Lmb} with $z^k$ in place of $z$. \qed

{\it Proof of Corollary~\ref{cor:stable_fixed_point}.}
Fix an FMS $(\zeta,z)(\cdot)$. In Section~\ref{sec:fluid_model}, we discussed how Theorem~\ref{th:stable_fixed_point} implies that $z(t) \to z^\ast$ as $t \to \infty$. Here we prove that $z(t) \to z^\ast$ implies $\zeta(t) \to \zeta^\ast$. It suffices to show that, for any $\eps > 0$, there exists a $t_\eps$ such that, for all $t \geq t_\eps$, $i$ and Borel sets $A \subseteq \nneg^2$,
\begin{equation} \label{eq40}
\begin{split}
\zeta_i(t)(A) &\leq \zeta_i^\ast(A^\eps) + \eps, \\
\zeta_i^\ast(A) &\leq \zeta_i(t)(A^\eps) + \eps,
\end{split}
\end{equation}
so fix $\eps > 0$.

For any $\dlt \in (0, \min_{1 \leq i \leq I} z_i^\ast)$, there exists a $\tau_\dlt$ such that, for all $t \geq \tau_\dlt$,
\[
z^\ast - \dlt := (z_1^\ast - \dlt, \ldots, z_I^\ast - \dlt) \leq z(t) \leq (z_1^\ast + \dlt, \ldots, z_I^\ast + \dlt) =: z^\ast + \dlt.
\]
Then, for all $t \geq s \geq \tau_\dlt$ and $i$, we have
\[
\underbrace{r_i(z^\ast - \dlt, z^\ast + \dlt)}_{\displaystyle{=: r_i^\dlt}}(t - s) \leq S_i(z,s,t) \leq \underbrace{R_i(z^\ast - \dlt, z^\ast + \dlt)}_{\displaystyle{=: R_i^\dlt}}(t - s),
\]
which, when plugged into the shifted fluid model equation~\eqref{eq:mvfms_shifted}, implies that, for all~$t \geq \tau_\dlt$, $i$ and Borel sets $A \subseteq \nneg^2$,
\begin{subequations} \label{eq41}
\begin{align}
\zeta_i(t)(A) &\leq \zeta_i(\tau_\dlt)(\nneg^2 \times [t - \tau_\dlt, \infty)) +\eta_i \int_0^{t - \tau_\dlt} \theta_i(A+(r_i^\dlt s, s)) ds, \label{eq41a}\\
\zeta_i(t)(A) &\geq \eta_i \int_0^{t - \tau_\dlt} \theta_i(A+(R_i^\dlt s, s)) ds. \label{eq41b}
\end{align}
\end{subequations}
Recall from Section~\ref{sec:fluid_model} that, for all $i$ and Borel sets $A \subseteq \nneg^2$,
\begin{equation} \label{eq42}
\zeta_i^\ast(A) = \eta_i \int_0^\infty \theta_i(A + (\lmb_i(z^\ast)s,s)) ds.
\end{equation}
Now, there exists a $t'_\eps$ such that, for all~$i$, Borel sets $A \subseteq \nneg^2$ and $\dlt \in (0, \min_{1 \leq i \leq I} z_i^\ast)$,
\begin{subequations}
\begin{align}
\eta_i \int_{t'_\eps}^\infty \theta_i(A+(r_i^\dlt s, s)) ds \leq \eta_i \int_{t'_\eps}^\infty \pr \{ D_i \geq s \} ds \leq \eps/2, \label{eq43a} \\
\eta_i \int_{t'_\eps}^\infty \theta_i(A+(\lmb_i(z^\ast) s, s)) ds \leq \eta_i \int_{t'_\eps}^\infty \pr \{ D_i \geq s \} ds \leq \eps/2. \label{eq43b}
\end{align}
\end{subequations}
Take $\dlt \in (0, \min_{1 \leq i \leq I} z_i^\ast)$ such that
\[
\| R^\dlt - \lmb(z^\ast) \| t'_\eps \leq \eps /2  \quad \text{and} \quad \| r^\dlt - \lmb(z^\ast) \| t'_\eps \leq \eps /2.
\]
Then, for all $i$ and Borel sets $A \subseteq \nneg^2$,
\begin{subequations}
\begin{align}
\eta_i \int_0^{t'_\eps} \theta_i(A+(r_i^\dlt s, s)) ds &\leq \eta_i \int_0^{t'_\eps} \theta_i(A^\eps+(\lmb_i(z^\ast) s, s)) ds \label{eq44a} \\
\eta_i \int_0^{t'_\eps} \theta_i(A+(\lmb_i(z^\ast) s, s)) ds &\leq \eta_i \int_0^{t'_\eps} \theta_i(A^\eps+(R_i^\dlt s, s)) ds. \label{eq44b}
\end{align}
\end{subequations}
Also take $t''_\eps$ such that, for all $i$,
\begin{equation} \label{eq45}
\zeta_i(\tau_\dlt)(\nneg^2 \times [t''_\eps - \tau_\dlt, \infty) ) \leq \eps/2.
\end{equation}
Now we put~\eqref{eq41}--\eqref{eq45} together in order to obtain~\eqref{eq40}: for all $t \geq t_\eps := (\tau_\dlt + t'_\eps) \vee t''_\eps$, $i$ and Borel sets $A \subseteq \nneg^2$,
\begin{gather*}
\zeta_i(t)(A) \stackrel{\eqref{eq41a},\eqref{eq45}}{\leq} \eps/2 + \eta_i \int_0^{t - \tau_\dlt} \theta_i(A + (r_i^\dlt s, s)) ds 
\stackrel{\eqref{eq43a}}{\leq} \eps/2 + \eta_i \int_0^{t'_\eps} \theta_i(A + (r_i^\dlt s, s)) ds + \eps/2 \\
\stackrel{\eqref{eq44a}}{\leq} \eta_i \int_0^{t'_\eps} \theta_i(A^\eps + (\lmb_i(z^\ast) s, s)) ds + \eps \stackrel{\eqref{eq42}}{\leq} \zeta_i^\ast(A^\eps) + \eps
\end{gather*}
and
\begin{gather*}
\zeta_i^\ast(A) \stackrel{\eqref{eq42},\eqref{eq43a}}{\leq} \eta_i \int_0^{t'_\eps} \theta_i(A + (\lmb_i(z^\ast) s, s)) ds + \eps/2 \\
\stackrel{\eqref{eq44b}}{\leq} \eta_i \int_0^{t'_\eps} \theta_i(A^\eps + (R_i^\dlt s, s)) ds + \eps / 2 \stackrel{\eqref{eq41b}}{\leq} \zeta_i(t)(A^\eps) + \eps. \tag*{\qed}
\end{gather*}

{\it Proof of Lemma~\ref{lem1}.}
For all $s \leq t$ and $\eps > 0$,
\begin{align*}
\int_s^t \pr \{ u+x \leq \xi < u+x'+\eps \} du &= \int_{s + x}^{t + x} \pr \{ \xi \geq u \} du - \int_{s + x' + \eps}^{t + x' + \eps} \pr \{ \xi \geq u \} du \\
&\leq \int_{s + x}^{s + x' + \eps} \pr \{ \xi \geq u \} du \leq x' - x + \eps.
\end{align*}
The lemma follows as we first let $\eps \to 0$ (applying the dominated convergence theorem) and then $s \to -\infty$, $t \to \infty$. \qed


\end{document}